\documentclass{amsart}

\PassOptionsToPackage{ngerman,main=english,shorthands=off}{babel}
\usepackage[dvipsnames]{xcolor}
\usepackage{amsmath, amsthm, amsfonts, mathtools, tikz-cd, amssymb}
\usepackage{enumitem, comment}

\usepackage{tikz}
\usepackage{quiver}
\usetikzlibrary{positioning}
\usetikzlibrary{decorations.markings}
\tikzset{
    double line with arrow/.style args={#1,#2}{
        decorate,decoration={markings,%
            mark=at position 0 with {\coordinate (ta-base-1) at (0,1pt);
                \coordinate (ta-base-2) at (0,-1pt);},
            mark=at position 1 with {\draw[#1] (ta-base-1) -- (0,1pt);
                \draw[#2] (ta-base-2) -- (0,-1pt);
            }
        }
    }
}
\tikzset{Equal/.style={-,double line with arrow={-,-}}}

\usepackage{babel}
\usepackage[utf8]{inputenc}
\usepackage[T1]{fontenc}
\usepackage{lmodern}
\usepackage[babel,
    protrusion=true,
    expansion=true,
    kerning=true,
    tracking=true,
]{microtype}
\SetTracking{encoding={*}, shape=sc}{20} 
\newcommand*{\mathup}[1]{\mathrm{#1}}

\AddToHook{cmd/tableofcontents/before}{%
    \microtypesetup{protrusion=false}
}
\AddToHook{cmd/tableofcontents/after}{%
    \microtypesetup{protrusion=true}
}

\usepackage{needspace}
\newcommand*{\AvoidPageBreak}{\Needspace*{0pt}}

\usepackage[shortcuts]{extdash}

\usepackage[all]{nowidow}
\usepackage{csquotes}
\usepackage{fnpct}
\mathtoolsset{mathic=true}

\usepackage[
    pagebackref,
    urlcolor=black,
    linkcolor=blue,
    citecolor=Green,
    allbordercolors=blue,
    citebordercolor=Green
]{hyperref}
\hypersetup{hypertexnames=true}

\usepackage[
    alphabetic,
    backrefs,
    msc-links
]{amsrefs}
\usepackage[thmtools-compat]{keytheorems}

\newkeytheoremstyle{linkedrestate}{%
    headformat={%
        \protect\IfRestatingTF{%
            \protect\hypertarget{orig-#1-#2}{}%
            \protect\hyperlink{restated-#1-#2}{\NAME\thmnumber{ #2}}%
            \NOTE%
        }{%
            \protect\hypertarget{restated-#1-#2}{}%
            \NAME\thmnumber{ #2}%
            \NOTE%
        }%
    },
}

\usepackage{zref-clever}
\zcsetup{
    cap,
    nameinlink,
    abbrev=true,
    countertype={
        defn=definition,
        thm=theorem,
        thmalpha=theorem,
        prop=proposition,
        lem=lemma,
        cor=corollary,
        coralpha=corollary,
        rem=remark
    }
}
\zcRefTypeSetup{condenumi}{
    Name-sg = Condition ,
    name-sg = condition ,
    Name-pl = Conditions ,
    name-pl = conditions ,
}

\newcommand{\cref}[2][]{\zcref[#1]{#2}}

\newcommand{\Cref}[2][]{\zcref[S,#1]{#2}}

\theoremstyle{definition}
\newtheorem{defn}{Definition}[section]
\newtheorem{example}[defn]{Example}

\newtheorem*{question*}{Question}
\newtheorem*{conjecture*}{Conjecture}
\newtheorem{rem}[defn]{Remark}

\newtheorem{construction}[defn]{Construction}
\theoremstyle{plain}
\theoremstyle{linkedrestate}
\newtheorem{thm}[defn]{Theorem}
\newtheorem{lem}[defn]{Lemma}
\newtheorem{prop}[defn]{Proposition}
\newtheorem{cor}[defn]{Corollary}

\newtheorem{thmalpha}{Theorem}

\newtheorem{coralpha}[thmalpha]{Corollary}

\numberwithin{equation}{section}

\newlist{condenum}{enumerate}{1}
\setlist[condenum]{label=\upshape(\roman*)}
\setlist[enumerate]{label=\upshape(\arabic*)}

\newlist{refenumImpl}{enumerate}{2}
\zcsetup{
    counterresetby = {
        refenumImplii  = refenumImpli ,
        refenumImpliii = refenumImplii ,
        refenumImpliv  = refenumImpliii ,
    }
}
\newcommand{\RefEnumSetCounter}[1]{%
    \setlist[refenumImpl,1]{label=\upshape(\arabic*),ref=\csname the#1\endcsname \,\upshape(\arabic*)}%
    \setlist[refenumImpl,2]{label=\upshape(\alph*), ref=\csname the#1\endcsname \,\upshape(\arabic{refenumImpli}-\alph*)}%
}
\newcommand{\RefEnumPatchCounterType}[1]{%
    \expanded{%
        \noexpand\zcsetup{%
            countertype = {
                refenumImpli   = #1 ,
                refenumImplii  = #1 ,
                refenumImpliii = #1 ,
                refenumImpliv  = #1 ,
            }}%
    }%
}

\ExplSyntaxOn
\NewDocumentCommand \IfRefenumCounter { m m m }{
    \str_case:enTF {#1} {
        {refenumImpli} {}
        {refenumImplii} {}
        {refenumImpliii} {}
        {refenumImpliv} {}
    }{#2}{#3}
}
\ExplSyntaxOff

\makeatletter
\newenvironment{refenum}[1][]{%
    \IfRefenumCounter{\@currentcounter}{%
    }{%
        \edef\refenum@parentcounter{\@currentcounter}%
        \edef\refenum@parenttype{\zref@getcurrent{zc@type}}%
        \RefEnumSetCounter{\refenum@parentcounter}%
        \RefEnumPatchCounterType{\refenum@parenttype}%
    }
    \begin{refenumImpl}[#1]%
}{%
    \end{refenumImpl}%
}
\makeatother

\ExplSyntaxOn
\cs_new_protected:Nn \gaussler_extract_subref:n {
    \__gaussler_extract_last:n #1
}
\cs_new:Nn \__gaussler_extract_last:n {
    \__gaussler_extract_last:w #1 () \q_stop
}
\cs_generate_variant:Nn \gaussler_extract_subref:n {e}
\cs_new:Npn \__gaussler_extract_last:w #1 (#2) #3 \q_stop {
    \tl_if_empty:nTF {#2} {\textbf{??}} {(#2)}
}
\NewDocumentCommand{\localref}{m}{
    \hyperref[#1]{ \textup { \gaussler_extract_subref:e { \getrefnumber{#1} } } }
}
\ExplSyntaxOff

\newcommand{\calG}{\mathcal{G}}

\newcommand{\defq}{\coloneqq}

\newcommand{\onto}{\ensuremath\twoheadrightarrow}

\newcommand{\bs}{\backslash}

\newcommand{\colim}{\operatorname*{colim}}

\newcommand{\id}{\operatorname{id}}

\NewDocumentCommand{\FP}{E{_}{{}}}{%
    \mathup{FP}\IfNoValueTF{#1}{}{_{\mkern-2mu#1}}%
}
\NewDocumentCommand{\F}{E{_}{{}}}{%
    \mathup{F}\IfNoValueTF{#1}{}{_{\mkern-2mu#1}}%
}

\makeatletter
\ExplSyntaxOn
\cs_new:Npn \start_is_cs:nTF #1 {
    \tl_if_empty:nTF { #1 }
    { \use_ii:nn }
    {
        \exp_args:Ne \token_if_cs:NTF { \tl_head:n { #1 } }
    }
}
\NewDocumentCommand{\FirstTokenCommandTF}{ m m m }{
    \start_is_cs:nTF { #1 } { #2 } { #3 }
}
\ExplSyntaxOff

\newbox\usefulbox
\newcommand{\getslant}[1]{\strip@pt\fontdimen1 #1}
\newcommand{\fillKern}[1]{%
    \FirstTokenCommandTF{#1}{%
        \mkern-0.5mu
    }{%
        \setbox\usefulbox=\hbox{$\m@th\scriptstyle #1$}%
        \dimen@ \getslant\the\scriptfont\symletters \ht\usefulbox%
        \kern-\dimen@%
    }%
    #1%
}
\makeatother
\newcommand{\verythinspace}{%
    \ifmmode%
        \mskip0.5\thinmuskip%
    \else%
        \ifhmode%
            \kern0.08334em%
        \fi%
    \fi%
}
\newcommand{\delimiterSpace}{%
    \mathchoice{\verythinspace}{}{}{}%
}

\usepackage{mleftright}
\makeatletter
\newcommand*{\norm}[1]{%
    \if@display%
        \mathord{\@norm{\displaystyle}{\delimiterSpace #1 \delimiterSpace}}%
    \else%
        \mathord{\@norm{}{\delimiterSpace #1 \delimiterSpace}}%
    \fi%
}
\newcommand*{\@norm}[2]{%
    \@autodelimiterpair{#1}{#2}{\lVert}{\rVert}%
}
\newcommand*{\abs}[1]{%
    \mathinner{\@abs{}{#1}}%
}
\newcommand*{\@abs}[2]{%
    \@autodelimiterpair{#1}{#2}{\lvert}{\rvert}%
}
\newcommand*{\@autodelimiterpair}[4]{%
    \sbox0{$#1\vcenter{}$}%
    \dimen0=\ht0 %
    \sbox0{$#1#2$}%
    \dimen2=\dimexpr\ht0-\dimen0\relax
    \dimen4=\dimexpr\dp0+\dimen0\relax
    \ifdim\dimen4>\dimen2 %
        \dimen2=\dimen4 %
    \fi
    \dimen4 = \dimen2 %
    \divide\dimen4 by 500 %
    \dimen4=\delimiterfactor\dimen4 %
    \dimen6=\dimexpr 2\dimen2 - \delimitershortfall\relax
    \ifdim\dimen6>\dimen4 %
        \dimen4=\dimen6 %
    \fi
    \sbox0{$#1\Bigl#3\Bigr#4$}%
    \ifdim\dimexpr\ht0+\dp0\relax>\dimen4 %
        \mleft#3#2\mright#4
    \else
        \Bigl#3#2\Bigr#4
    \fi
}
\makeatother

\makeatletter
\usepackage{faktor}
\newcommand{\quot}[2]{%
    \if@display%
        \faktor{#1}{#2}%
    \else%
        #1/#2%
    \fi%
}
\DeclareRobustCommand*{\mfaktor}[3][]%
{%
    {\mathpalette{\mfaktorImpl}{{#1}{#2}{#3}}}%
}
\newcommand*{\mfaktorImpl}[2]{\mfaktorImplImpl#1#2}
\newcommand*{\mfaktorImplImpl}[4]{
    \settoheight{\faktor@zaehlerhoehe}{\ensuremath{#1#2{#3}}}%
    \settoheight{\faktor@nennerhoehe}{\ensuremath{#1#2{#4}}}%
    \raisebox{-0.5\faktor@zaehlerhoehe}{\ensuremath{#1#2{#3}}}%
    \mkern-4mu\diagdown\mkern-5mu%
    \raisebox{0.5\faktor@nennerhoehe}{\ensuremath{#1#2{#4}}}%
}
\ifLuaTeX
    \newcommand{\lquot}[2]{%
            #2\backslash#1%
    }
\else
    \newcommand{\lquot}[2]{%
            #2\kern1pt\bs\kern0.5pt#1%
    }
\fi
\makeatother

\makeatletter
\let\oldoverline\overline
\def\getslant #1{\strip@pt\fontdimen1 #1}

\def\skoverline #1{\mathchoice
 {{\setbox\usefulbox=\hbox{$\m@th\displaystyle #1$}%
    \dimen@ \getslant\the\textfont\symletters \ht\usefulbox
    \divide\dimen@ \tw@
    \kern\dimen@
    \oldoverline{\kern-\dimen@ \box\usefulbox\kern\dimen@ }\kern-\dimen@ }}
 {{\setbox\usefulbox=\hbox{$\m@th\textstyle #1$}%
    \dimen@ \getslant\the\textfont\symletters \ht\usefulbox
    \divide\dimen@ \tw@
    \kern\dimen@
    \oldoverline{\kern-\dimen@ \box\usefulbox\kern\dimen@ }\kern-\dimen@ }}
 {{\setbox\usefulbox=\hbox{$\m@th\scriptstyle #1$}%
    \dimen@ \getslant\the\scriptfont\symletters \ht\usefulbox
    \divide\dimen@ \tw@
    \kern\dimen@
    \oldoverline{\kern-\dimen@ \box\usefulbox\kern\dimen@ }\kern-\dimen@ }}
 {{\setbox\usefulbox=\hbox{$\m@th\scriptscriptstyle #1$}%
    \dimen@ \getslant\the\scriptscriptfont\symletters \ht\usefulbox
    \divide\dimen@ \tw@
    \kern\dimen@
    \oldoverline{\kern-\dimen@ \box\usefulbox\kern\dimen@ }\kern-\dimen@ }}%
 {}}
\makeatother
\renewcommand{\overline}[1]{\skoverline{#1}}
\makeatletter

\newcommand{\DehnFunctionSymbol}{\delta}
\newcommand{\FillingFunctionSymbol}{\operatorname{FV}}
\newcommand{\FillingVolumeSymbol}{\operatorname{FVol}}
\newcommand{\VolumeSymbol}{\operatorname{Vol}}
\newcommand{\AreaSymbol}{\operatorname{Area}}

\NewDocumentCommand{\Dehn}{e{_^}}{%
    \DehnFunctionSymbol
    \IfValueT{#1}{_{#1}}%
    \IfValueT{#2}{^{#2}}%
}
\NewDocumentCommand{\Filling}{e{_^}}{%
    \FillingFunctionSymbol
    \IfValueT{#1}{_{\fillKern{#1}}}%
    \IfValueT{#2}{^{#2}}%
}
\NewDocumentCommand{\FillingVolume}{e{_^}}{%
    \FillingVolumeSymbol
    \IfValueT{#1}{_{\fillKern{#1}}}%
    \IfValueT{#2}{^{#2}}%
}
\NewDocumentCommand{\Volume}{e{_^}}{%
    \VolumeSymbol
    \IfValueT{#1}{_{\fillKern{#1}}}%
    \IfValueT{#2}{^{#2}}%
}
\NewDocumentCommand{\Vol}{e{_^}}{%
    \VolumeSymbol
    \IfValueT{#1}{_{\fillKern{#1}}}%
    \IfValueT{#2}{^{#2}}%
}
\NewDocumentCommand{\Mass}{m}{%
    \norm{#1}%
}
\NewDocumentCommand{\Area}{e{_^}}{%
    \AreaSymbol
    \IfValueT{#1}{_{\fillKern{#1}}}%
    \IfValueT{#2}{^{#2}}%
}

\hypersetup{%
    pdfauthor={Claudio Llosa Isenrich, Jannis Weis},%
    pdftitle={Dehn functions of groups acting on trees},%
    linktoc=page,
    hidelinks,
}

\usetikzlibrary{
    calc,
    intersections,
    decorations.pathmorphing, decorations.pathreplacing, decorations.shapes,
    trees,
    fit, backgrounds,
}

\input{figures/base.tex}

\newlist{steps}{enumerate}{1}
\setlist[steps]{label=\upshape\textbf{Step \arabic*},ref=\arabic*}
\zcRefTypeSetup{stepsi}{
    Name-sg = Step ,
    name-sg = step ,
    Name-pl = Steps ,
    name-pl = steps ,
    cap = false
}

\DeclareMathOperator{\Const}{const}
\DeclareMathOperator{\dist}{dist}
\DeclareMathOperator{\edgeDist}{edist}
\DeclareMathOperator{\MonDist}{Mon}
\newcommand{\Sol}{\operatorname{Sol}}

\newcommand{\collapse}{\mathrel{\searrow}}
\newcommand{\collapseFrom}{\mathrel{\swarrow}}

\newcommand{\transElem}[2]{h_{#1,#2}}
\newcommand{\mondElem}[3]{g_{#1,#2,#3}}
\newcommand{\mondGlue}[3]{\Psi_{#1,#2,#3}}
\newcommand{\transConj}[2]{c_{#1,#2}}
\newcommand{\transGlue}[2]{\psi_{#1,#2}}

\title[A Brown Theorem for Dehn functions of graphs of groups]
{A Brown Theorem for Dehn functions of \mbox{graphs of groups}}

\author{Claudio Llosa Isenrich}
\address{Department of Mathematics, University of Luxembourg, 6 Avenue de la Fonte, 4364 Esch-sur-Alzette, Luxembourg}
\email{claudio.llosaisenrich@uni.lu}

\author{Jannis Weis}
\address{\foreignlanguage{ngerman}{Fakultät für Mathematik, Karlsruher Institut für Technologie}, 76131 Karlsruhe, Germany}
\email{jannis.weis@kit.edu}

\keywords{Dehn functions, Higher filling functions, Brown's Theorem}
\subjclass[2020]{20F65; 20F69; 20J05; 20E08; 57M07}

\date{August 7, 2026}

\begin{document}

\begin{abstract}
    \setlength{\parfillskip}{0pt plus 0.75\textwidth} 
    We prove an upper bound on the Dehn function of a group $G$ acting cellularly, cocompactly, and without inversions on a simply connected CW complex $X$ in terms of the Dehn functions of the vertex stabilizers, the Dehn function of $X$, and the distortion of the edge stabilizers, provided that $X$ is either a tree or the stabilizer of each $2$-cell has finite index in the stabilizer of every edge in its boundary. This provides a Dehn function analogue of Brown's Theorem for finiteness properties and an answer to a question of Zaremsky in these cases. We also prove analogues of our result for higher Dehn functions when $X$ is a tree.
\end{abstract}

\maketitle

\section{Introduction}
The Dehn function $\Dehn_G(n)$ of a finitely presented group $G$ provides a quantitative measure of the worst-case complexity of a brute-force approach to the word problem for words of length $\leq n$. It can be interpreted geometrically as an optimal isoperimetric function $\Dehn_Y$ for null-homotopic loops of length $\leq n$ in the universal cover $\widetilde{Y}$ of a closed manifold or compact CW complex $Y$ with fundamental group $\pi_1(Y)\cong G$. This geometric interpretation readily generalizes to the higher-dimensional Dehn functions $\Dehn_Y^k(n)$, which are defined as optimal isoperimetric functions for null-homotopic $k$-spheres of volume $\leq n$ in $\widetilde{Y}$ under the additional assumption that the universal cover $\widetilde{Y}$ is $k$-connected. Under these assumptions, $\Dehn_Y^k$ depends only on the fundamental group $G=\pi_1(Y)$, up to asymptotic equivalence, and is a quasi-isometry invariant among groups of finiteness type $\F_{k+1}$~\cite{AlonsoWangPride-1999}; here we say that a group is of type $\F_k$ if it admits a classifying space with finite $k$-skeleton. We thus often write $\Dehn_G^k$ instead of $\Dehn_Y^k$, with the understanding that this denotes an equivalence class of functions.

Taking the point of view of finiteness properties and recalling that type $\F_n$ is itself a quasi-isometry invariant~\cite{Alonso-1994}, one can view $k$-dimensional Dehn functions $\Dehn_G^k$ as quasi-isometry invariants that further quantify the finiteness properties $\F_{k+1}$. This naturally raises the question of the extent to which results on finiteness properties admit quantitative analogues in terms of Dehn functions. A classical result of this kind is Brown's Theorem.
\begin{thmalpha}[{Brown~\cite{Brown-1987}}]
    Let $G$ be a group that acts cellularly and cocompactly on a $(k-1)$-connected CW complex $X$. Assume that the cell stabilizer $G_\sigma$ of every $i$-cell $\sigma$ is $\F_{k-i}$ for every $0\leq i \leq k$. Then $G$ is $\F_k$.
\end{thmalpha}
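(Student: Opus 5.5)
We will build a $K(G,1)$ with finite $k$-skeleton by a Borel-type construction, replacing the cells of $X$ by classifying spaces of their stabilizers. Since $G$ acts cocompactly, there are finitely many $G$-orbits of cells. We may assume the action is without inversions, i.e.\ every $g\in G_\sigma$ fixes $\sigma$ pointwise. If it is not, pass to the barycentric subdivision. Stabilizers of cells in the subdivision are finite-index subgroups of original stabilizers, so they retain the finiteness properties, and the indexing works out because a cell of dimension $i$ in the subdivision lies in an original cell of dimension $\ge i$.

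For each cell $\sigma$ of $X$ choose a free contractible $G_\sigma$-CW complex $E_\sigma$ whose quotient $E_\sigma/G_\sigma$ has finite $(k-\dim\sigma)$-skeleton. These choices are made equivariantly: we fix one representative per orbit and set $E_{g\sigma}=gE_\sigma$. When $\tau$ is a face of $\sigma$, we have $G_\sigma\le G_\tau$, and we choose $G_\sigma$-equivariant cellular maps $E_\sigma\to E_\tau$. These exist by freeness of $E_\sigma$ and contractibility of $E_\tau$.

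We then form the space
$$Y=\bigcup_\sigma G\times_{G_\sigma}(E_\sigma\times\sigma),$$
glued along the maps $E_\sigma\times\partial\sigma\to E_\tau\times\tau$. Making the gluing maps compatible on iterated faces requires an inductive homotopy-coherence argument, since the maps only commute up to homotopy. The standard way around this, and the main technical obstacle, is to avoid strict gluing altogether:
\begin{itemize}
\item either use Brown's spectral-sequence or "filtration" approach with the homotopy-colimit/Borel construction $EG\times_G X$ and then cellular replacement,
\item or do induction over skeleta, attaching cells of $E_\sigma\times\sigma$ one orbit at a time.
\end{itemize}

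The key properties of $Y$ are as follows.
\begin{enumerate}
\item $G$ acts freely on $Y$. This holds because it acts freely on each $G\times_{G_\sigma}E_\sigma$.
\item The $G$-equivariant projection $Y\to X$ has fibre over the interior of $\sigma$ equal to the contractible $E_\sigma$. Hence $Y\to X$ is a homotopy equivalence, by the gluing lemma applied skeleton by skeleton, or equivalently by Quillen fibre arguments. In particular $Y$ is $(k-1)$-connected.
\item The cells of $Y$ in dimension $\le k$ are products $e\times\sigma$ with $\dim e+\dim\sigma\le k$. There are finitely many $G$-orbits of these, since $\dim e\le k-\dim\sigma$ and $E_\sigma/G_\sigma$ has finite $(k-\dim\sigma)$-skeleton.
\end{enumerate}

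So $G$ acts freely on a $(k-1)$-connected complex $Y$ with cocompact $k$-skeleton. Attaching free $G$-orbits of cells of dimension $\ge k+1$ to kill the higher homotopy groups yields a contractible free $G$-complex $Y'$. The quotient $Y'/G$ is a $K(G,1)$ with finite $k$-skeleton, so $G$ is $\F_k$.
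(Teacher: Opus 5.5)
The paper gives no proof of this theorem (it is quoted from Brown), but your construction of $Y$ is essentially the Haefliger construction the paper recalls in its preliminaries. Items (1)--(3) and the final step of attaching free orbits of cells of dimension $\geq k+1$ are fine.

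The step that fails is the reduction to actions without inversions via barycentric subdivision: the indexing goes the wrong way. A $p$-simplex $\sigma_0<\cdots<\sigma_p$ of the subdivision has stabilizer $\bigcap_j G_{\sigma_j}$. This has finite index in $G_{\sigma_p}$, but typically infinite index in $G_{\sigma_0}$, so all you know is that it is $\F_{k-\dim\sigma_p}$. Since $\dim\sigma_p\geq p$, this is \emph{weaker} than the required $\F_{k-p}$. For instance, the barycenter of an original $j$-cell $\sigma$ becomes a $0$-cell whose stabilizer $G_\sigma$ must now be $\F_k$, but it is only assumed to be $\F_{k-j}$. For a tree with an inverted edge $e$, the midpoint of $e$ becomes a vertex with stabilizer $G_e$, which is only $\F_{k-1}$. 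So the subdivided action does not satisfy the hypotheses you go on to use. Moreover, a general CW complex has no barycentric subdivision.

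The repair is to drop the reduction. Under the usual meaning of a cellular action on a $G$-CW complex, and in the setting where the paper uses the Haefliger construction (actions without inversion), $G_\sigma$ already fixes $\sigma$ pointwise, so nothing needs to be done. If you want to allow $G_\sigma$ to move a (polyhedral) cell $\sigma$, keep the original cells and let $G_\sigma$ act diagonally on $E_\sigma\times\sigma$. This action is still free, and the $G$-orbits of cells of dimension $\leq k$ over $\sigma$ still correspond to $G_\sigma$-orbits of cells of $E_\sigma$ of dimension $\leq k-\dim\sigma$. The hypothesis is thus applied to the original cells, where it holds.

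As for the coherence problem, only your second option is an argument. ``Borel construction plus cellular replacement'' presupposes a model with cocompact $k$-skeleton, which is exactly the content of the theorem. The skeletal induction works as follows. Write $Y^{[n-1]}$ for the part built over $X^{(n-1)}$. Then $E_\sigma\times\partial\sigma$ is a free $G_\sigma$-complex and $Y^{[n-1]}\to X^{(n-1)}$ is a $G$-map that is a homotopy equivalence. By the equivariant Whitehead theorem for free complexes, there is a $G_\sigma$-map $E_\sigma\times\partial\sigma\to Y^{[n-1]}$ lying over the attaching map of $\sigma$ up to $G_\sigma$-homotopy, and this suffices for the gluing lemma. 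The paper likewise defers the strict attaching maps to Drutu--Kapovich.
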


The purpose of this work is to explore the existence of analogues of Brown's criterion for Dehn functions. A version of the question whether such an analogue exists for the first Dehn function $\Dehn_G^1$ was raised by Zaremsky~\cite{Zaremsky-open-problems}*{Problem 1.17}. We give an affirmative answer if $X$ is $1$-dimensional. To state it, we recall that a relative $0$-dimensional analogue of the Dehn function is the distortion $\dist_H^G$ of a subgroup $H\leq G$. We denote by $\overline{f}$ the superadditive closure of a function $f$.

\begin{thmalpha}%
    \label{thm:Dehn-Brown-trees}
    Let $G$ be a group that acts cellularly and cocompactly without inversion on a tree $T$. Assume that all vertex stabilizers $G_v$ are finitely presented and that all edge stabilizers $G_e$ are finitely generated. Then
    \[
        \Dehn_G(n)\preccurlyeq n\cdot \max_{\mathclap{v\in V(T)}}\; \Dehn_{G_v} \left(\overline{\edgeDist_T}(n)\right),
    \]
    where $\edgeDist_T(n)=\max_{e\in E(T)} \dist_{G_e}^G(n)$.
\end{thmalpha}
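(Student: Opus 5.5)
By Bass--Serre theory, $G$ is the fundamental group of a finite graph of groups $\mathcal{G}$ with underlying graph $\Gamma = \lquot{T}{G}$. The vertex groups are the $G_v$ and the edge groups are the $G_e$. I will work with the standard presentation of $G$: take finite presentations $\langle S_v \mid R_v\rangle$ of the vertex groups, finite generating sets $S_e$ of the edge groups, and a stable letter $t_e$ for each edge of $\Gamma$ outside a maximal tree. The relations are the $R_v$, the relations $t_e\, \iota_e(s)\, t_e^{-1} = \tau_e(s)$ for $s\in S_e$, where the images are written as fixed words in $S_{o(e)}$ and $S_{t(e)}$, and $t_e = 1$ for edges of the maximal tree. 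This is a finite presentation, since $\Gamma$ is finite by cocompactness. Fix word metrics on each $G_v$. The distortion of $G_e$ in $G_v$ is at most its distortion in $G$, so $\edgeDist_T$ controls how long an element of an edge group can be when written in vertex generators, given its length in $G$.

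Let $w$ be a null-homotopic word of length $n$. I will read it as a closed path in $T$: each maximal subword in the letters of one vertex group $S_v$ stays at a vertex (a translate $gv$), and each stable letter crosses an edge. Since $T$ is a tree, the loop backtracks, so there is an innermost backtrack. This is a subword $t_e^{\pm1} u\, t_e^{\mp1}$, or a subword crossing a tree edge and returning, with $u$ a word in the vertex generators (possibly after merging adjacent vertex syllables at the same vertex) representing an element of the edge group $\iota_e(G_e)$. I will replace $u$ by a geodesic word $u'$ in $S_e$, of length at most $\dist_{G_e}^G(|u|)$, and then conjugate across the edge to get a word in the adjacent vertex group. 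This costs one area from $R_v$ in the vertex group, namely $\Dehn_{G_v}(|u| + \text{length of } u')$, plus $|u'|$ conjugation relations.

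Iterating this reduction is the main obstacle, because lengths grow. After pushing across an edge, the new syllable merges with its neighbours, and its length is now governed by distortion rather than by $|u|$. The point of the statement is that no iterated distortion appears, only a single $\overline{\edgeDist_T}(n)$. To achieve this I will measure every intermediate edge-group element by its length in $G$, not by its vertex-word length. Every such element is represented by a subword of the original $w$ together with the path data, so its $G$-length is at most the length of the corresponding subword of $w$. Its vertex-word length after re-expression is then at most $\edgeDist_T$ of that subword length.

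Organising the reduction along the tree structure, I will use a van Kampen diagram decomposed into vertex regions and edge strips. The loop $w$ decomposes into at most $n$ "vertex pieces": for each vertex of $T$ visited, the boundary of the region at that vertex consists of original subwords $w_i$ and geodesic edge-group words $u_j$. The $u_j$ correspond to sub-loops that leave through an edge and return, and so have $G$-length at most the length of that excursion. The lengths of these excursions plus the $w_i$ sum to at most $n$ for each vertex region. The region at a vertex therefore has perimeter at most $\sum_j \edgeDist_T(\ell_j) \le \overline{\edgeDist_T}(n)$, which is where superadditivity enters, and area at most $\max_v \Dehn_{G_v}(\overline{\edgeDist_T}(n))$. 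There are at most $n$ vertex regions, since each is entered by a distinct letter of $w$. The edge strips contribute at most $n\cdot\overline{\edgeDist_T}(n)$ conjugation relations, and this is dominated because $\Dehn_{G_v}(m)\succcurlyeq m$. Summing gives $\Dehn_G(n)\preccurlyeq n\cdot\max_v\Dehn_{G_v}(\overline{\edgeDist_T}(n))$.

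Two points need care: the bookkeeping that the perimeters sum correctly, and the treatment of finitely many orbits of $v$ and $e$ when taking the maxima.
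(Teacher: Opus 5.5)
Your proposal is correct and is essentially the paper's argument. The paper works in the Haefliger complex $\widehat{X}$, whose $2$-skeleton plays the role of your presentation complex, and collapses the tree diagram one edge at a time (Lemma 3.1, Theorem 3.3); it bounds each new corridor by $\edgeDist_T$ of the length of an \emph{original} subpath with the same endpoints, and sums over the edges at a vertex using superadditivity. This is exactly your vertex-region/edge-strip picture, which the paper also sketches in its subsection on intuition for trees. As in the paper, for the edge through which a vertex region is entered, you should bound the corridor using the complementary arc of the cyclic word, so that the arcs around each vertex are disjoint and sum to at most $n$; your cyclic phrasing ``sub-loops that leave through an edge and return'' already does this, and even the cruder choice would only change constants.
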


The assumptions in \cref{thm:Dehn-Brown-trees} are equivalent to saying that $G$ is the fundamental group of a finite graph of groups, where, up to conjugation, the $G_v$ and $G_e$ are its vertex and edge groups. In particular, the maxima in the statement exist. Similar considerations imply that the maxima in all subsequent statements exist. The form of the estimate in \cref{thm:Dehn-Brown-trees} is sharp in general. Indeed, the Baumslag--Solitar group $BS(1,2)=\left\langle a,b\mid bab^{-1}=a^2\right\rangle$ decomposes as an HNN extension with exponentially distorted infinite cyclic edge and vertex stabilizers. Hence, \cref{thm:Dehn-Brown-trees} yields an exponential upper bound on $\Dehn_{BS(1,2)}$, which is known to be optimal~\cite{Gersten-1992}.

While \cref{thm:Dehn-Brown-trees} does not seem to appear in the literature for arbitrary graphs of groups, previous results cover several special cases. Brick studied the behavior of Dehn functions under amalgamated products and HNN extensions, while Brick and Corson obtained sharper estimates for amalgamated products along strongly undistorted subgroups~\cites{Brick-1993,Brick-Corson-2000}. Brick and Corson also proved a general estimate for finite developable complexes of groups with finite edge groups in terms of the Dehn functions of the vertex groups and the Howie function of the complex~\cite{brick+corson}. Estimates for doubles $G \ast_H G$ and HNN extensions $G\ast_H$ of a finitely presented group $G$ with respect to a finitely generated subgroup $H\leq G$ appear in~\cite{bridson+haefliger}*{Thm. III.$\Gamma$.6.20}. Doubles and certain iterated ascending HNN extensions have played an important role in gaining new insights into Dehn functions, and they admit generalizations to higher dimensions and to homological Dehn functions~\cites{BBFS-2009,LiManin-2022}.

We generalize our results to Dehn functions of groups acting on higher-dimensional simplicial complexes and to higher Dehn functions of groups acting on trees.
\begin{thmalpha}%
    \label{thm:Dehn-Brown-complexes}
    Let $G$ be a group that acts cellularly and cocompactly without inversion on a simply connected simplicial complex $X$. Assume that all vertex stabilizers $G_v$ are finitely presented, that all edge stabilizers $G_e$ are finitely generated, and that $[G_e : G_\sigma]< \infty$ for all $2$-cells $\sigma$ and edges $e\subset \sigma$. Then
    \[
        \Dehn_G(n)\preccurlyeq \Dehn_X(n)\cdot \max_{v\in X^{(0)}} \Dehn_{G_v} \left(\overline{\edgeDist_X}(\Dehn_X(n))\right),
    \]
    where $\edgeDist_X(n)=\max_e \dist_{G_e}^G(n)$ and $e$ runs over all edges of $X$.
\end{thmalpha}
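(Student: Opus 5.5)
We follow the classical construction of a presentation for $G$ from the action on $X$ (as in Brown's proof), and estimate areas by decomposing a null-homotopic word in two stages. Fix representatives $v_1,\dots,v_r$ of the vertex orbits, edges $e_1,\dots,e_s$, and $2$-cells $\sigma_1,\dots,\sigma_t$; choose finite presentations $\langle S_i\mid R_i\rangle$ of $G_{v_i}$ and finite generating sets of $G_{e_j}$. Since the action is cocompact, there are finitely many \emph{transition elements} $\transElem{i}{j}$ carrying endpoints of representative edges into the chosen orbit representatives. Since $X$ is simply connected, $G$ is generated by $\bigcup S_i$ together with these transition elements. The relations are the following: the $R_i$; edge relations identifying each generator of $G_{e_j}$, written as a word over $S$ at one endpoint, with its conjugate by the transition element, written over $S$ at the other endpoint; and, for each $\sigma_k$, one \emph{monodromy} relation expressing that the product of transition elements around $\partial\sigma_k$ equals a fixed element $\mondElem{k}{}{}$ of a vertex group. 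This is a finite presentation because each $\sigma_k$ has finitely many boundary edges.

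Given a null-homotopic word $w$ of length $n$, write $w=s_0 h_1 s_1 h_2\cdots$ with $s_\ell$ in vertex groups and $h_\ell$ transition letters. Reading $w$ gives an edge path $\gamma$ in $X$ of length at most $n$, and $\gamma$ is closed because $w=1$. We fill $\gamma$ by a van Kampen diagram $D$ in $X^{(2)}$ with at most $\Dehn_X(n)$ $2$-cells. We then pull $D$ back cell by cell: each $2$-cell $g\sigma_k$ of $D$ contributes a conjugate of its monodromy relation. This rewrites $w$, using $O(\Dehn_X(n))$ monodromy and edge relations plus conjugations, into a word that is a product of vertex-group words attached to the vertices of $D$. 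For each vertex $v$ of $D$, the vertex group elements collected around $v$ multiply to a word $u_v$ that is trivial in $G_v$. To justify the local moves across an edge $e$ of $D$, we need the elements of $G$ arising from the boundary to lie in $G_e$. This is where the hypothesis $[G_e:G_\sigma]<\infty$ enters. Adjacent $2$-cells of $D$ sharing $e$ have stabilizers commensurable inside $G_e$, so after fixing finitely many coset representatives the monodromy elements can be taken from a finite set, and the transfer across $e$ needs only boundedly many edge relations per letter.

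The key quantitative step is bounding the length of each $u_v$. Each $u_v$ is assembled from at most $O(\Dehn_X(n))$ pieces. Each piece is either a segment of $w$ or an element of an edge group $G_e$ whose $G$-length is at most the $G$-length of the relevant subword, which is bounded by $\Dehn_X(n)$ up to constants. Rewriting such an element over the generators of $G_e$ costs length $\dist_{G_e}^G(\cdot)$. Summing over pieces and using superadditivity gives $\abs{u_v}\preccurlyeq \overline{\edgeDist_X}(\Dehn_X(n))$. Each $u_v$ is filled inside $G_v$ at cost $\max_i\Dehn_{G_{v_i}}(\abs{u_v})$. The number of vertices that need such fillings is $O(\Dehn_X(n))$: there are at most three per $2$-cell, plus those on $\gamma$. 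Multiplying gives the claimed bound, with the edge-relation costs $O(\Dehn_X(n)\cdot\overline{\edgeDist_X}(\Dehn_X(n)))$ absorbed since $\Dehn_{G_v}(m)\succcurlyeq m$.

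The main obstacle is this length bookkeeping. The transported edge-group elements at interior vertices of $D$ are products along paths in $D$ of length up to $\Dehn_X(n)$, not just $n$. Making sure this is all that is needed requires choosing the tree of conjugators in $D$, for example via a maximal tree in the $1$-skeleton of $D$, so that each piece's $G$-length is controlled. I would first try to prove a lemma that every word appearing at a vertex corresponds to a loop in $D\cup\gamma$ of length $O(\Dehn_X(n))$, and then apply distortion edge by edge.
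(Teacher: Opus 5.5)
Your outline has the same architecture as the paper: absorb the $2$-cells of $D$ at cost $O(1)$ each using $[G_e:G_\sigma]<\infty$, then fill one word per vertex of $D$ inside $G_v$, controlling the edge-group pieces by distortion and superadditivity. Using a Brown-type presentation instead of the complex $\widehat{X}$ is only a change of language. However, the step that actually produces the bound is left open, and the justification you sketch does not work as stated.

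Superadditivity gives $\sum_j \edgeDist_X(\ell_j)\le \overline{\edgeDist_X}\bigl(\sum_j \ell_j\bigr)$. So you need the \emph{total} $G$-length of the edge-group pieces at a vertex to be $O(\Dehn_X(n))$. Knowing only that each of $O(\Dehn_X(n))$ pieces has $G$-length $O(\Dehn_X(n))$ yields $\overline{\edgeDist_X}(\Dehn_X(n)^2)$ instead.

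Moreover, for an interior edge $e$ of $D$ there is no ``relevant subword''. An interior edge does not separate a disc, and the element of $G_e$ living on $e$ is the discrepancy between the lifts chosen for the two adjacent $2$-cells. Your proposal never fixes those lifts. Your description of where finite index enters (commensurable stabilizers of adjacent $2$-cells, bounded transfer across $e$) also does not fit. Bounded transfers cannot in general be arranged across all interior edges at once: going around an interior vertex, the bounded adjustments accumulate to a discrepancy that is again controlled only by distortion.

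The missing idea is to pull back the $2$-cells in a collapsing order. By \cref{lem:planaer-complex-collapsible}, $D$ collapses by elementary collapses onto a tree $\Delta'$ with the same vertex set. Each collapse removes a $2$-cell $\sigma$ through a free edge $e$ lying on the \emph{current} boundary.

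\Cref{lem:lifting-elementary-2-simplex-homotopies} lifts each such step. Finite index lets you slide the current boundary edge by at most $[G_e:G_\sigma]-1$ steps inside $EG_e\times e$ onto the boundary of a lift of this particular $\sigma$. So each step costs at most $[G_e:G_\sigma]$ cells and lengthens the boundary word by a bounded amount. This fixes the lifts coherently and yields a loop $\gamma'$ of length $m_n\le n+K\,\Dehn_X(n)$, for a constant $K$, whose projection bounds the tree $\Delta'$.

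Only then is distortion used. In a tree every edge separates, so the corridor over an edge $e_i$ at $v$ has endpoints joined by a subpath $\gamma_{e_i}$ of $\gamma'$, and its length is at most $\dist^G_{G_{e_i}}(c\,\ell(\gamma_{e_i}))$ for a constant $c$. The subpaths belonging to the edges at a fixed vertex are disjoint, so $\sum_i\ell(\gamma_{e_i})\le m_n$. Hence every vertex word has length at most $m_n+\overline{\edgeDist_X}(c\,m_n)$.

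Your maximal-tree suggestion points in this direction: $\Delta'$ is a spanning tree of $D^{(1)}$, and its complementary edges are exactly the free edges used in the collapses. But without this ordering and the resulting disjointness, the central estimate is not established.
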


For higher Dehn functions, we obtain the following result, where $\Filling_H^{k+1}$ denotes the $(k+1)$-dimensional integral homological filling function of $H$; see \cref{sec:Background} for a definition and note that, for historical reasons, there is an index shift.
\begin{thmalpha}%
    \label{thm:high-dimensional-Brown-trees}
    For $k\geq 2$, let $G$ be a group that acts cellularly and cocompactly without inversion on a tree $T$. Assume that all vertex stabilizers $G_v$ are $\F_{k+1}$ and that all edge stabilizers $G_e$ are $\F_k$. Then
    \[
        \Filling_G^{k+1}(n)\preccurlyeq \overline{f}(\overline{g}(n)),
    \]
    where $f(n)=\max_{v\in V(T)} \Filling_{G_v}^{k+1}(n)$ and $g(n)=\max_{e\in E(T)} \Filling_{G_e}^{k}(n)$.
\end{thmalpha}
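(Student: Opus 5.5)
We prove the statement in the homological setting, where $\Filling^{k+1}$ measures the minimal mass of an integral $(k+1)$-chain filling a $k$-cycle. The plan is to build a model space and push a filling through a Mayer--Vietoris type decomposition along the tree.

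\textbf{Model space.} Since $G$ is the fundamental group of a finite graph of groups, we choose the following data.
\begin{itemize}
    \item For each vertex orbit, a $K(G_v,1)$ with finite $(k+1)$-skeleton.
    \item For each edge orbit, a $K(G_e,1)$ with finite $k$-skeleton.
    \item Cellular maps from each edge space into its two adjacent vertex spaces.
\end{itemize}
The resulting total space $Y$ (a graph of spaces) is a $K(G,1)$ with finite $(k+1)$-skeleton. Its universal cover $\widetilde Y$ maps $G$-equivariantly to $T$. The preimage of each vertex is a copy $\widetilde Y_v$ of the universal cover of the vertex space. The preimage of the open edge $e$ is $\widetilde Y_e\times(0,1)$.

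\textbf{Step 1: cutting the cycle.} Let $z$ be a cellular $k$-cycle in $\widetilde Y$ of mass $n$. Since $k\ge 2$, we may first push $z$ cellularly so that it meets the edge parts only in product cells $c\times(0,1)$, with $c$ a $(k-1)$-cell of some $\widetilde Y_e$. This costs a linear change in mass and a linear-size filling of the difference. Then $z$ decomposes as a finite sum of pieces $z_v$ supported in the vertex spaces $\widetilde Y_v$, together with product chains $a_e\times[0,1]$ with $a_e$ a $(k-1)$-chain in $\widetilde Y_e$. The cycle condition forces each $a_e$ to be a $(k-1)$-cycle in $\widetilde Y_e$. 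Moreover, $\sum_e\Mass{a_e}\le n$.

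\textbf{Step 2: edge fillings.} We fill each $a_e$ inside $\widetilde Y_e$ by a $k$-chain $b_e$ with
\[
    \Mass{b_e}\le g(\Mass{a_e}).
\]
Summing over edges gives total mass at most $\overline g(n)$, by superadditivity of the closure. We then replace each $a_e\times[0,1]$ by the two copies of $b_e$, pushed into the two adjacent vertex spaces. Concretely, we add the boundary of $b_e\times[0,1]$, at cost $\preccurlyeq \overline g(n)$ in $(k+1)$-volume. This yields a homologous cycle that is a sum of $k$-cycles $z'_v$, one in each vertex space $\widetilde Y_v$. These satisfy
\[
    \sum_v\Mass{z'_v}\preccurlyeq n+\overline g(n)\preccurlyeq \overline g(n),
\]
using $g(n)\succcurlyeq n$.

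\textbf{Step 3: vertex fillings.} Each $z'_v$ is a cycle because the vertex spaces are disjoint and the total is a cycle; the tree structure is what guarantees no cancellation across different vertex spaces. We fill $z'_v$ in $\widetilde Y_v$ with mass at most $f(\Mass{z'_v})$. Summing gives at most $\overline f(\overline g(n))$. The total filling mass is then
\[
    \overline g(n)+\overline f(C\overline g(n))\preccurlyeq \overline f(\overline g(n)).
\]
Finally, we invoke the standard comparison of filling functions between a $K(G,1)$ with finite $(k+1)$-skeleton and the relative model for the vertex groups. We also use that the filling functions of the inclusions $\widetilde Y_v\subset\widetilde Y$, computed intrinsically, agree with $\Filling^{k+1}_{G_v}$ up to equivalence.

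\textbf{Main obstacle.} The delicate point is Step 1: the cellular pushing that makes $z$ transverse to the edge spaces with only linear cost. We also need the masses of the slices $a_e$ to be controlled by $n$ with no distortion term; this is where $k\ge 2$ and the homological (rather than homotopical) setting help. Unlike the case $k=1$ of \cref{thm:Dehn-Brown-trees}, no distortion appears, because the slices are already chains in the edge spaces themselves and need not be measured in the metric of $G$. I would first try a cellular deformation retraction of a neighbourhood of each $\widetilde Y_e\times\{1/2\}$, with constants uniform by cocompactness.
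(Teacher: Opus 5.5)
Your argument is correct. In fact it constructs the same filling as the paper, but it organizes the construction differently, and one of your worries is misplaced.

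\textbf{Step 1 is not an obstacle.} In your graph-of-spaces model the cylinders $Y_e\times[0,1]$ carry the product cell structure and are glued along cellular maps. So every cell of $\widetilde Y$ over an open edge is already of the form $c\times(0,1)$. Hence every cellular $k$-cycle decomposes as $z=\sum_v z_v+\sum_e a_e\times[0,1]$ with $\sum_e\Mass{a_e}\leq n$, with no pushing and no extra cost. The paper builds exactly this into its model (\cref{prop:polyhedral-haefliger-model:product-cells}).

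The real role of $k\geq 2$ lies elsewhere. The slices $a_e$ are cycles of positive dimension, so they can be filled intrinsically in $\widetilde Y_e$ at cost $\Filling_{G_e}^{k}$. For $k=1$ they are $0$-chains, whose intrinsic fillings are paths in $G_e$ and are controlled only via distortion.

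\textbf{How the paper proceeds.} The paper does not fill all slices at once; it collapses the tree leaf by leaf. For a leaf $v$ on $e=\{u,v\}$ it does the following:
\begin{itemize}
\item it fills the slice at the $u$-end by a chain $a'$;
\item it retracts $a-a'$ into the vertex space over $v$ and fills the resulting cycle there, via the chain homotopy of that retraction (\cref{lem:tree collapse single cell-kd});
\item it adds $a'$ to the piece over $u$;
\item it controls masses by a telescoping sum (\cref{rem:leaf-collapse-preserves-tree-decomposition}).
\end{itemize}
Unwinding the induction, the cycle filled over each vertex is exactly your $z'_v$, and the chain-homotopy term $P_k(a')$ is your prism $b_e\times[0,1]$.

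\textbf{What each version buys.} Your simultaneous version is shorter: it needs no ordering of edges, no telescoping, and no separate chain-homotopy lemma. It also does not need the attaching maps to be injective, since a bounded cellular chain map only costs a multiplicative constant $C$. The paper's bound $\overline{g}(n)+\overline{f}(n+2\overline{g}(n))$ simply becomes $\overline{g}(n)+\overline{f}(n+2C\overline{g}(n))$, and $C\overline{g}(n)\leq\overline{g}(Cn)$ by superadditivity. The paper's sequential form is chosen to parallel the homotopical cases $k=1,2$, where disks and spheres genuinely have to be collapsed one leaf at a time.

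\textbf{Minor points.} Your closing appeal to a comparison of filling functions is unnecessary. Each $\widetilde Y_v$ is itself the universal cover of a $K(G_v,1)$ with finite $(k+1)$-skeleton, and its cells are cells of $\widetilde Y$. So its intrinsic filling function is already a representative of $\Filling_{G_v}^{k+1}$, and masses agree. As in the paper, absorbing the additive $\overline{g}(n)$ into $\overline{f}(\overline{g}(n))$ uses the convention that $f$ and $g$ are taken to be at least linear.
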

Note that Barnard, Brady, and Dani~\cite{BarnardBradyDani-2012}*{Proposition 6.1} proved an analogue of \cref{thm:high-dimensional-Brown-trees} for the second-order homotopical Dehn function $\Dehn_G^2$. Moreover, in very recent work, Sauer and the second author proved a version of \cref{thm:high-dimensional-Brown-trees} with a polynomial upper bound for complexes of groups under the assumption that the filling functions of the complex, the homological Dehn functions of all cell stabilizers, and the distortion of cell stabilizers are polynomially bounded~\cite{SauerWeis-2026}.

It is known that $\Dehn_H^k$ and $\Filling_H^{k+1}$ need not be equivalent for $k=1, 2$, while for $k\geq 3$ we have $\Dehn_H^k\approx \Filling_H^{k+1}$~\cites{AbramsBradyDaniYoung-2013,Young-2011}. Thus, we deduce the following higher-dimensional version of~\cite{BarnardBradyDani-2012}*{Proposition 6.1} from \cref{thm:high-dimensional-Brown-trees}.
\begin{coralpha}%
    \label{cor:high-dimensional-Brown-trees}
    For $k\geq 4$, let $G$ be a group that acts cellularly and cocompactly without inversion on a tree $T$. Assume that all vertex stabilizers $G_v$ are $\F_{k+1}$ and that all edge stabilizers $G_e$ are $\F_k$. Then
    \[
        \Dehn_G^k(n)\preccurlyeq \overline{f}(\overline{g}(n)),
    \]
    where $f(n)=\max_{v\in V(T)} \Dehn_{G_v}^k(n)$ and $g(n)=\max_{e\in E(T)} \Dehn_{G_e}^{k-1}(n)$.
\end{coralpha}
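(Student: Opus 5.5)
This should follow directly from \cref{thm:high-dimensional-Brown-trees} applied with the same $k$, once we know that homotopical and homological higher Dehn functions agree in the relevant degrees. The hypotheses are identical: $G$ acts cellularly, cocompactly and without inversion on a tree $T$, the vertex stabilizers $G_v$ are $\F_{k+1}$, and the edge stabilizers $G_e$ are $\F_k$. Since $k\geq 4\geq 2$, \cref{thm:high-dimensional-Brown-trees} yields
\[
    \Filling_G^{k+1}(n)\preccurlyeq \overline{f'}(\overline{g'}(n)),\qquad f'(n)=\max_{v} \Filling_{G_v}^{k+1}(n),\quad g'(n)=\max_{e} \Filling_{G_e}^{k}(n).
\]

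The plan is to translate each filling function into the corresponding Dehn function using the equivalence $\Dehn_H^j\approx\Filling_H^{j+1}$ for $j\geq 3$ from~\cites{AbramsBradyDaniYoung-2013,Young-2011}. This requires checking finiteness types.
\begin{itemize}
    \item For the vertex groups we use $j=k\geq 3$. They are $\F_{k+1}$, so $\Dehn_{G_v}^k\approx\Filling_{G_v}^{k+1}$, and hence $f\approx f'$.
    \item For the edge groups we use $j=k-1$. This is exactly where $k\geq 4$ is needed: it makes $j=k-1\geq 3$. The edge groups are $\F_k=\F_{(k-1)+1}$, so $\Dehn_{G_e}^{k-1}\approx\Filling_{G_e}^{k}$, and hence $g\approx g'$.
    \item For $G$ itself, Brown's Theorem applied to the action on $T$ shows that $G$ is $\F_{k+1}$: vertex stabilizers are $\F_{k+1}$, edge stabilizers are $\F_k$, and $T$ is contractible with no cells in dimension $\geq 2$. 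Therefore $\Dehn_G^k\approx\Filling_G^{k+1}$.
\end{itemize}

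The only real issue is that $\approx$ must pass through the maxima, the superadditive closures and the composition. Taking a maximum over finitely many functions preserves $\preccurlyeq$. If $f\preccurlyeq f'$, i.e.\ $f(n)\leq Cf'(Cn+C)+Cn+C$, then applying the superadditive closure gives the same kind of bound for $\overline{f}$ in terms of $\overline{f'}$. I would check this by splitting a partition $n=\sum n_i$ and absorbing the additive constants using superadditivity, which bounds them by a linear term. Composition of monotone functions respects $\preccurlyeq$ once the outer function is superadditive, since then $\overline{f}(Cm+C)\leq (C+1)\overline{f}(m)+\text{const}$. I expect this bookkeeping of constants under the closure and composition to be the main, though routine, obstacle. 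With it in place, chaining the equivalences gives
\[
    \Dehn_G^k(n)\approx\Filling_G^{k+1}(n)\preccurlyeq\overline{f'}(\overline{g'}(n))\approx\overline{f}(\overline{g}(n)).
\]
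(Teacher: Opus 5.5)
Your route is the paper's: the corollary is deduced from \cref{thm:high-dimensional-Brown-trees} using $\Dehn_H^j\approx\Filling_H^{j+1}$ for $j\geq 3$. This is applied with $j=k$ to $G$ and the vertex groups, and with $j=k-1$ to the edge groups, which is exactly where $k\geq 4$ enters. Your use of Brown's theorem to see that $G$ is $\F_{k+1}$ supplies a detail the paper leaves implicit.

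The one step you spell out, however, is justified incorrectly. The inequality $\overline{f}(Cm+C)\leq (C+1)\overline{f}(m)+\text{const}$ is a subadditivity property. Superadditivity gives the reverse, $\overline{f}(Cm)\geq C\,\overline{f}(m)$, and your claim fails for $\overline{f}(n)=2^n$, which is precisely the kind of outer function the corollary is meant to cover. So constants sitting inside the argument of $\overline{f}$ cannot be pulled out of it.

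The fix is to push them into the inner function instead. First normalize so that $f(n)\geq n$ and $g(n)\geq n$. This is harmless up to $\approx$, and some such normalization is in fact needed for $\overline{f}\circ\overline{g}$ to be well defined on $\approx$-classes at all (compare $g\equiv 0$ with $g(n)=n$). Suppose $\overline{g'}(n)\leq C\,\overline{g}(Cn+C)+Cn+C$ with $C\in\mathbb{N}$. Using $\overline{g}(m)\geq m$ and superadditivity in the form $K\,\overline{g}(m)\leq\overline{g}(Km)$, you get
\[
    C\,\overline{g'}(n)+C\leq K\,\overline{g}(Cn+C)\leq\overline{g}(C'n+C'),\qquad K=2C^2+C,\quad C'=KC.
\]
Now combine this with $\overline{f'}(m)\leq C\,\overline{f}(Cm+C)+Cm+C$, with monotonicity of $\overline{f}$ (a consequence of superadditivity and nonnegativity), and with $\overline{f}(m)\geq m$ to absorb the linear term. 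This yields $\overline{f'}(\overline{g'}(n))\leq (C+1)\,\overline{f}\bigl(\overline{g}(C'n+C')\bigr)$, which is the comparison you need. Your treatment of the maxima and of the superadditive closure is fine as stated.
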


Note that, for $k=2$, we have $\Dehn_H^2\preccurlyeq \Filling_H^{3}$. Thus, we cannot deduce \cref{cor:high-dimensional-Brown-trees} in the stated form for $k=3$. However, we can still bound $\Dehn_G^3$ analogously in terms of the Dehn functions $\Dehn_{G_v}^3$ and $\Filling_{G_e}^3$ for $v\in V(T)$ and $e\in E(T)$.

\subsection*{Structure}
In \cref{sec:Background} we introduce all required background on distortion, Dehn functions, collapses, and complexes of groups. In \cref{sec:dimension-one} we prove \cref{thm:Dehn-Brown-trees} and \cref{thm:Dehn-Brown-complexes}. In \cref{sec:high-dimension} we prove \cref{thm:high-dimensional-Brown-trees}. For completeness, we also include a proof of the analogue of \cref{cor:high-dimensional-Brown-trees} for $k=2$, although this is~\cite{BarnardBradyDani-2012}*{Proposition 6.1}. Finally, in \cref{sec:applications} we discuss the special case of group extensions as well as an application to amalgamated products of nilpotent groups over cyclic groups.

\subsection*{Acknowledgements}
The authors thank Uri Bader, Jer\'onimo Garc\'ia-Mej\'ia and Roman Sauer for helpful discussions.

\section{Preliminaries}\label{sec:Background}
In this section, we introduce all relevant background and notions concerning distortion, Dehn functions, collapses, and classifying spaces for complexes of groups.

\subsection{Distortion, van~Kampen diagrams and Dehn functions}

We will give a brief introduction to distortion, van~Kampen diagrams and Dehn functions. For more detailed accounts we refer to~\cites{Bridson-2002-a,BradyRileyShort-2007} in the case of $1$-dimensional homotopical Dehn functions and to~\cites{AlonsoWangPride-1999,Bridson-2002-b,BBFS-2009,Young-2011} for higher-dimensional homotopical and homological Dehn functions.

The notions of distortion and Dehn functions for groups are well-defined only up to certain equivalence relations on functions $f, g\colon \mathbb{N}_0\to \mathbb{R}_{\geq 0}$. We write $f\lesssim g$ (resp. $f\preccurlyeq g$) if there is a constant $C>0$ such that $f(n)\leq Cg(Cn+C)+C$ (resp. $f(n)\leq Cg(Cn+C)+Cn+C$). We denote by $\simeq$ (resp. $\approx$) the equivalence relation induced by $\lesssim$ (resp. $\preccurlyeq$). We say that a function $f\colon \mathbb{N}_0\to \mathbb{R}_{\geq 0}$ is \emph{superadditive} if, for every partition $n=n_1+\cdots+n_k$ of $n\in \mathbb{N}_0$ into nonnegative integers, we have $f(n)\geq f(n_1)+\cdots+f(n_k)$. The \emph{superadditive closure} of a function $f\colon \mathbb{N}_0\to \mathbb{R}_{\geq 0}$ is the smallest superadditive function $\overline{f}\colon \mathbb{N}_0\to \mathbb{R}_{\geq 0}$ with $f\leq \overline{f}$; it can be defined by $\overline{f}(n)\defq\max_{n=n_1+\cdots+n_k} \sum_{i=1}^k f(n_i)$, where the maximum ranges over partitions with $n,n_i\geq 1$.

For a CW complex $X$, we denote by $X^{(n)}$ its $n$-skeleton and by $d_X\colon X^{(0)}\times X^{(0)}\to \mathbb{N}_{0}$ the combinatorial metric on its $1$-skeleton, where every edge has length $1$. A \emph{combinatorial path} $\gamma$ in the $1$-skeleton $X^{(1)}$ is a path defined by a consecutive choice of edges. Its length $\ell_X(\gamma)$ is its number of edges.

For a finitely generated group $G=\langle S\rangle$, we denote by $d_{G,S}$ its word metric, which is the combinatorial metric on its Cayley graph, and by $\ell_{G,S}(g)\defq d_{G,S}(g,1)$ the word length of an element $g\in G$. When a generating set is understood, we usually write simply $d_G$ and $\ell_G$.

\begin{defn}
    Let $G=\langle S\rangle$ and $H=\langle T\rangle$ be finitely generated groups with $H\leq G$. The \emph{distortion} $\dist_H^G\colon \mathbb{N}_0\to \mathbb{R}_{\geq 0}$ of $H$ in $G$ is
    \[
        \dist_H^G(n)\defq\max\left\{\ell_H(h) \mid h\in H \mbox{ with } \ell_G(h)\leq n \right\}.
    \]
\end{defn}

The distortion is independent of the choice of finite generating set up to the $\simeq$-equivalence.

We will now define higher-dimensional Dehn functions, following the definitions in~\cites{Bridson-2002-b,Riley-2003}. For further references with equivalent definitions, see~\cites{AlonsoWangPride-1999,BBFS-2009,Young-2011}.

We start by recalling that a \emph{combinatorial map} between CW complexes is a map that maps open $n$-cells homeomorphically onto open $n$-cells. A \emph{combinatorial complex} $K$ is defined iteratively as a CW complex such that the attaching maps $\phi_\sigma \colon \partial D^{n+1}\to K^{(n)}$ of $(n+1)$-cells are combinatorial with respect to some combinatorial structure on $S^n=\partial D^{n+1}$. A \emph{singular combinatorial map} $f\colon L\to K$ between CW complexes is a continuous map such that, for every open $n$-cell $e$ in $L$, either $f|_e$ is a homeomorphism onto an open $n$-cell of $K$ or $f(e)\subseteq K^{(n-1)}$; in the latter case, we say that $e$ is \emph{singular}. A \emph{singular combinatorial complex} $K$ is a CW complex such that the attaching maps $\phi_\sigma \colon \partial D^{n+1}\to K^{(n)}$ are singular combinatorial maps with respect to some combinatorial structure on $S^n=\partial D^{n+1}$.

\begin{defn}%
    \label{def:admissible-maps}
    Let $k\geq 1$, let $X$ be a singular combinatorial complex, and let $W$ be a compact $k$-dimensional manifold. An \emph{admissible map} $f\colon W\to X$ is a continuous map together with a combinatorial cell structure on $W$ such that $f$ is a singular combinatorial map. We define the volume $\Volume_X(f)$ to be the number of open $k$-cells of $W$ on which the restriction of $f$ is a homeomorphism onto an open $k$-cell of $X$.
\end{defn}

\begin{defn}
    Let $X$ be a $k$-connected singular combinatorial complex and let $\alpha\colon S^k\to X$ be an admissible map. Its filling volume is
    \[
        \FillingVolume_X(\alpha)\defq\min \left\{\Volume_X(\beta)\mid \beta\colon D^{k+1}\to X \mbox{ admissible with } \beta|_{\partial D^{k+1}}=\alpha \right\}.
    \]
    The \emph{$k$-dimensional (homotopical) Dehn function} of $X$ is
    \[
        \Dehn_X^k(n)\defq\max \left\{\FillingVolume_X(\alpha) \mid \Volume_X(\alpha)\leq n, ~\alpha: S^k \to X \mbox{ admissible}\right\}.
    \]
    The \emph{$k$-dimensional (homotopical) Dehn function} of a group $G$ of type $\F_{k+1}$ is $\Dehn_G^k(n)\defq\Dehn_X^k(n)$, where $X$ is any $k$-connected singular combinatorial complex that admits a free, cocompact, cellular $G$-action.
\end{defn}

\begin{rem}
    The definition of $\Dehn_X^k$ in~\cites{Bridson-2002-b,Riley-2003} uses based spheres and a basepoint in $X$. Since we consider only complexes with a cocompact group action, our basepoint-independent definition is equivalent to their definition up to $\approx$-equivalence of functions.

    For every group $G$ of type $\F_{k+1}$, $k\geq 1$, there is a $k$-connected singular combinatorial complex that admits a free, cocompact, cellular $G$-action. It can be constructed as the universal cover of the $(k+1)$-skeleton of a classifying space for $G$. However, it is not unique in general, and the definition of $\Dehn_G^k$ depends on its choice. Thus, formally, $\Dehn_G^k$ is defined only up to $\approx$-equivalence. More generally, its $\approx$-equivalence class is a quasi-isometry invariant among groups of type $\F_{k+1}$.
\end{rem}

In the special case $k=1$, one can also work with combinatorial maps by considering compact planar combinatorial $2$-complexes instead of disks. These are known as van~Kampen diagrams. We recall the main facts and definitions that we will use and refer to~\cites{Bridson-2002-a,BradyRileyShort-2007} for further details.
\begin{defn}
    Let $X$ be a combinatorial $2$-complex. A \emph{van~Kampen diagram} $(\Delta,f,\iota)$ for a null-homotopic combinatorial loop $\gamma\colon S^1 \to X^{(1)}$ is a compact contractible planar combinatorial $2$-complex $\Delta$ together with a combinatorial map $f\colon \Delta\to X^{(2)}$ such that $f|_{\partial \Delta}\circ \iota=\gamma$, where $\iota\colon S^1 \to \partial \Delta$ is a combinatorial parametrization of $\partial \Delta$. We denote by $\Area(f)\defq\Vol(f)$ the number of $2$-cells of $\Delta$.
\end{defn}

\begin{rem}
    If $G$ is a group that acts freely, cellularly, and cocompactly on a combinatorial $2$-complex $X$, then replacing admissible maps by van~Kampen diagrams in the definition of the Dehn function of $X$ yields a function that is $\approx$-equivalent to $\Dehn_G^1$. We can thus restrict to van~Kampen diagrams when working with the one-dimensional Dehn function $\Dehn_G\defq\Dehn_G^1$.
\end{rem}

\begin{rem}
    For every finitely presented group $G\cong \mathcal{P}=\left\langle S \mid R \right\rangle$, the universal cover $\widetilde{X}_\mathcal{P}$ of its presentation complex is a combinatorial $2$-complex. It can be equipped with a labeling of its edges by generators and its $2$-cells by relations. For a van~Kampen diagram $f\colon \Delta \to X^{(2)}$, this induces a labeling of its edges and $2$-cells by generators and relations. Van Kampen's Lemma then provides a translation between van~Kampen diagrams for combinatorial loops and free decompositions of null-homotopic words in $S$ as products of conjugates of relations from $R$. This provides a translation between isoperimetric functions and the complexity of a brute-force approach to the word problem.
\end{rem}

There is a homological version of $k$-dimensional Dehn functions for groups $G$ of type $\F_{k+1}$, defined in terms of cycles $\alpha$ and chains $\beta$ with $\partial \beta=\alpha$ in the chain complex of the universal cover of a classifying space with finite $(k+1)$-skeleton. In fact, the weaker homological finiteness property $\FP_{k+1}$, defined in terms of finitely generated resolutions by projective $\mathbb{Z}G$-modules, would suffice to define it. We do not discuss these finiteness properties further because we do not require them. As with homotopical Dehn functions, we define homological Dehn functions for more general complexes. We refer to~\cite{Young-2011} for further details.

For a CW complex $X$, we equip its cellular chain complex with integral coefficients $C_i(X,\mathbb{Z})$ with the $1$-norm $\Mass{\alpha}\defq \sum_{i\in I} |a_i|$ for $\alpha=\sum_{i\in I} a_i \sigma_i\in C_i(X,\mathbb{Z})$.
\begin{defn}
    Let $X$ be a $k$-connected CW complex. We define the filling volume of a $k$-cycle $\alpha\in Z_k(X,\mathbb{Z})$ as
    \[
        \FillingVolume_X^{k+1}(\alpha)\defq \inf \left\{ \Mass{\beta} \mid \beta \in C_{k+1}(X,\mathbb{Z}),~\partial\beta = \alpha\right\}.
    \]
    The \emph{$(k+1)$-dimensional homological Dehn function of $X$} (with integer coefficients) is
    \[
        \Filling_X^{k+1}(n)\defq\sup\left\{\FillingVolume_X^{k+1}(\alpha)\mid \alpha \in Z_k(X,\mathbb{Z}),~\Mass{\alpha}\leq n\right\}.
    \]
    For a group $G$ of type $\F_{k+1}$, let $X$ be a $k$-connected CW complex with a free cellular cocompact $G$-action. The \emph{$(k+1)$-dimensional homological Dehn function of $G$} (with integer coefficients) is $\Filling_G^{k+1}(n)\defq\Filling_X^{k+1}(n)$.
\end{defn}

Note that, for historical reasons, there is an index shift by $+1$ between homotopical and homological Dehn functions. As for homotopical Dehn functions, the $(k+1)$-dimensional homological Dehn function of a group $G$ of type $\F_{k+1}$ is well-defined up to $\approx$-equivalence~\cite{Young-2011}*{Lemma 1}. One can also define homological Dehn functions with other normed coefficient rings. Here we restrict to integral coefficients for simplicity.

\subsection{Collapsible complexes}

We recall the terminology of collapses and collapsible complexes, following Whitehead~\cite{whitehead}.

\begin{defn}
    Let $K$ be a simplicial complex. A simplex $\tau \in K$ is called a \emph{free face} of a simplex $\sigma \in K$ if $\tau$ is a proper face of $\sigma$ and $\sigma$ is the unique maximal simplex in $K$
    containing $\tau$.

    Removing all intermediate simplices between $\tau$ and $\sigma$ from $K$, that is, removing the set $\{\tau' \mid \tau \subseteq \tau' \subseteq \sigma \}$ yields a subcomplex $L$.
    This operation is called a \emph{collapse}, and we write $K \collapse L$.
    If $\tau$ has codimension $1$ in $\sigma$, it is called an \emph{elementary collapse}.

    The complex $K$ is called \emph{collapsible} if there exists a sequence of elementary collapses collapsing $K$ to a point.
\end{defn}

\begin{lem}[\cite{whitehead}]%
    \label{lem:collapse-in-order}
    Let $K$ be a finite collapsible simplicial complex of dimension $n$.
    Then there exists a sequence of collapses
    \[
        K = K_0 \collapse K_1 \collapse \ldots \collapse K_r = \{\ast\}\text{,}
    \]
    such that, for every $1 \leq i \leq n - 1$, every collapse of an $(i+1)$-cell precedes every collapse of an $i$-cell.
\end{lem}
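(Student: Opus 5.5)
The plan is to start from an arbitrary sequence of elementary collapses $K=L_0\collapse L_1\collapse\cdots\collapse L_r=\{\ast\}$, which exists by collapsibility, and reorder it by repeatedly swapping adjacent collapses that are out of order. Each elementary collapse is recorded as a pair $(\tau_j\subset\sigma_j)$ with $\dim\sigma_j=\dim\tau_j+1$, and its \emph{dimension} is $\dim\sigma_j$. A collapse of an $(i+1)$-cell means one whose removed top cell $\sigma_j$ is an $(i+1)$-simplex. The goal is a sequence in which these dimensions are weakly decreasing. Call a position $j$ \emph{bad} if $\dim\sigma_j<\dim\sigma_{j+1}$. We argue by induction on the number of inversions, that is, pairs $j<l$ with $\dim\sigma_j<\dim\sigma_l$. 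It suffices to show that at a bad position the two collapses $(\tau_j,\sigma_j)$ and $(\tau_{j+1},\sigma_{j+1})$ can be performed in the opposite order and lead to the same complex. The swap strictly decreases the number of inversions while keeping the rest of the sequence valid, since the complex after both steps is unchanged.

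The key step is the swap lemma. Let $M=L_{j-1}$, set $d=\dim\sigma_j$, and suppose $\dim\sigma_{j+1}>d$, so $\dim\tau_{j+1}\geq d$.

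We first check that $(\tau_{j+1},\sigma_{j+1})$ is already a free pair in $M$. The complex $M$ differs from $L_j$ only by the two simplices $\tau_j,\sigma_j$, both of dimension $\leq d$. Any simplex of $M$ containing $\tau_{j+1}$ has dimension $\geq\dim\tau_{j+1}\geq d$. The only candidate among the extra simplices is therefore $\sigma_j$, and only when $\dim\tau_{j+1}=d$, which would force $\sigma_j=\tau_{j+1}$. But $\tau_{j+1}\in L_j$ while $\sigma_j\notin L_j$, so this cannot happen. Hence the cofaces of $\tau_{j+1}$ in $M$ and in $L_j$ coincide, and $\sigma_{j+1}$ is the unique maximal simplex containing $\tau_{j+1}$ in $M$.

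Next we check that $(\tau_j,\sigma_j)$ remains a free pair in $M\setminus\{\tau_{j+1},\sigma_{j+1}\}$. Removing simplices cannot create new cofaces. We also need $\tau_j,\sigma_j$ to be distinct from $\tau_{j+1},\sigma_{j+1}$, which holds because the latter lie in $L_j$ and the former do not. The result of both orders is $M$ with the same four simplices removed, so the sequence beyond position $j+1$ is untouched.

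Once no bad position remains, the dimensions are weakly decreasing along the sequence. This is exactly the statement that every collapse of an $(i+1)$-cell precedes every collapse of an $i$-cell, and we obtain the desired sequence of \cref{lem:collapse-in-order}.

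The main obstacle is the freeness check in the swap lemma. Its delicate case is $\dim\sigma_{j+1}=d+1$, where $\tau_{j+1}$ has the same dimension as $\sigma_j$, and one must rule out $\sigma_j$ being a coface of $\tau_{j+1}$. This is handled by the membership argument above. One should also note that the lemma's index range $1\le i\le n-1$ is automatic from the weak ordering.
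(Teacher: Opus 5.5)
Your proof is correct and follows the same approach as the paper. Both start from an arbitrary sequence of elementary collapses and swap adjacent out-of-order collapses, using that collapsing a lower-dimensional cell cannot create a new free face of a higher-dimensional one. Your version is more explicit: it handles an arbitrary dimension gap and checks that both pairs are still free after the swap, which the paper's one-line argument leaves implicit.
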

\begin{proof}
    Collapsing an $i$-cell cannot create new free faces of $(i+1)$-cells; hence, if a collapse of an $i$-cell
    occurs after a collapse of an $(i+1)$-cell, they are independent and can also be performed in the opposite order.
\end{proof}

\begin{example}
    There is only one type of collapse of a $1$-cell, namely the elementary collapse of the cell along one of its vertices.
    There are two types of collapses of a $2$-cell: collapse along a free vertex and collapse along a free edge.
    The non-elementary collapse along a free vertex is equivalent to first collapsing an adjacent free edge and then collapsing the remaining edge through an elementary collapse of a $1$-cell (see \cref{fig:collapse-types}).
\end{example}

\begin{lem}%
    \label{lem:planaer-complex-collapsible}
    Every finite contractible planar simplicial $2$-complex $K$ is collapsible.
\end{lem}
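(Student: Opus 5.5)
We argue by induction on the number of simplices of $K$, reducing at each step to a smaller finite contractible planar simplicial $2$-complex by a single elementary collapse. Since a subcomplex of a planar complex is planar and an elementary collapse is a homotopy equivalence, contractibility and planarity are preserved. It therefore suffices to show that every finite contractible planar simplicial complex with more than one vertex admits an elementary collapse.

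\textbf{Case 1: $K$ has no $2$-simplices.} Then $K$ is a finite connected graph. Since $K$ is contractible, it is a tree. A finite tree with at least one edge has a leaf $v$ with incident edge $e$, and $v$ is a free face of $e$. Collapsing $e$ along $v$ removes $v$ and $e$ and leaves a smaller tree.

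\textbf{Case 2: $K$ contains a $2$-simplex.} Embed $K$ in $\mathbb{R}^2\subset S^2$. Since $K$ is contractible, Alexander duality (or the Jordan curve theorem applied componentwise) gives $\widetilde H_0(S^2\setminus K)\cong \check H^1(K)=0$. Hence the complement $U=\mathbb{R}^2\setminus K$ is connected, namely the unbounded face.

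Pick a $2$-simplex $\sigma$. Its interior is an open region of the plane, and we follow a path from it out to $U$. Moving through $2$-simplices across shared edges, we either reach $U$ directly through an edge, or we are blocked at points where $2$-simplices meet only in vertices. To settle this, consider the union $D$ of the closed $2$-simplices. Let $Q$ be a connected component of $\mathbb{R}^2\setminus D$, taking the unbounded component when it meets $D$'s boundary appropriately. Boundary edges of the closed region $D$ lie in exactly one $2$-simplex, and there is at least one such edge because $D$ is a nonempty compact $2$-dimensional set.

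The key claim is that some boundary edge $e$ of $D$ is not contained in any other part of $K$. Here "other part" means $e$ lies in no second $2$-simplex. Since $e$ is a $1$-simplex, it is also not contained in any further $1$-simplex. So $e$ is a free face of its unique $2$-simplex, and the elementary collapse removing $e$ and that $2$-simplex is available.

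To see that the claim holds, note that each edge of $K$ bounds at most two $2$-simplices locally, by planarity. Every edge of $\partial D$ lies in exactly one $2$-simplex, and no $1$-simplex properly contains another simplex. So any edge of $\partial D$ is a maximal face of exactly one $2$-simplex, which means it is free. The only things to check are that $\partial D$, the topological frontier of $D$ in the plane, is nonempty and contains a full edge rather than only vertices. Both hold because $D$ is a nonempty compact finite union of triangles in $\mathbb{R}^2$, so its frontier is a nonempty $1$-dimensional subcomplex.

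\textbf{Main obstacle.} The subtle point is Case 2: showing that a free edge exists rather than only free vertices. The frontier argument handles this. Contractibility is used only to ensure the complex stays contractible along the induction, so that it ends at a point rather than at a graph with cycles. The simplicial hypothesis is also needed: it guarantees that two $2$-simplices sharing an edge are distinct, so frontier edges genuinely lie in one triangle. Combining the two cases gives a sequence of elementary collapses to a single vertex.
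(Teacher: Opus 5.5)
Your proof is correct and follows essentially the same route as the paper: collapse $2$-simplices one at a time along an edge on the frontier of the planar region, which lies in a unique triangle and is therefore free, and once no $2$-simplices remain, contractibility forces a tree, which collapses to a point. Your frontier-of-$D$ argument justifies the existence of such an edge more carefully than the paper does (the frontier must contain an edge because $\mathbb{R}^2$ minus finitely many vertices is connected). The Alexander duality and path-following paragraph is never used and can be deleted.
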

\begin{proof}
    If $K$ is not a tree, then it has a $1$-simplex $e$ on its boundary that is a face of a $2$-simplex $\sigma$.
    Performing an elementary collapse of $\sigma$ along $e$ reduces the number of $2$-simplices of the complex by one.
    Once all $2$-simplices have been collapsed, the remaining complex is $1$-dimensional and still contractible, hence a tree, which is clearly collapsible.
\end{proof}

\begin{figure}
    \centering
    \begin{tikzpicture}[
        line width=0.8pt,
        deleted/.style={red},
    ]
    \newcommand{\Vertex}[2][]{
        \fill[#1] (#2) circle[radius=1.5pt];
    }
    \newcommand{\TriangleBase}[1][]{%
        \filldraw[line width=0.8pt,#1] (0,0) coordinate (A) -- (1.2,0) coordinate (B) -- (0.6,1.05) coordinate (C) -- cycle;
    }
    \newcommand{\Triangle}[2][]{%
        \TriangleBase[#1]
        #2
        \foreach \P in {A,B,C}{\Vertex{\P}}
    }
    \newcommand{\Collapse}[1][0.55]{
        \draw[draw=none] (1.55,#1) -- node {\Large$\collapse$} (2.35,#1);
    }
    \newcommand{\CollapseFrom}[1][0.55]{
        \draw[draw=none] (2.35,#1) -- node {\Large$\collapseFrom$} (1.55,#1);
    }

    \begin{scope}[shift={(0,0)}]
        \begin{scope}
            \draw[deleted] (0,0) coordinate (A) -- (1.2,0) coordinate (B);
            \Vertex[deleted]{A}
            \Vertex{B}
        \end{scope}
        \Collapse[0]
        \begin{scope}[shift={(2.7,0)}]
            \Vertex{0,0}
        \end{scope}
    \end{scope}

    \begin{scope}[shift={(0,-2.45)}]
        \Triangle[fill=red,fill opacity=0.25]{
            \draw[deleted] (A) -- (B);
        }
        \Collapse
        \begin{scope}[shift={(2.7,0)}]
            \Triangle[opacity=0]{
                \draw (A) -- (C);
                \draw (B) -- (C);
            }
        \end{scope}
    \end{scope}

    \begin{scope}[shift={(0,-4.9)}]
        \Triangle[fill=red,fill opacity=0.25]{}
        \Vertex[deleted]{C}
        \Collapse
        \begin{scope}[shift={(2.7,0)}]
            \draw (0,0.55) coordinate (A) -- (1.2,0.55) coordinate (B);
            \Vertex{B}
            \Vertex{A}

            \CollapseFrom
        \end{scope}
        \begin{scope}[shift={(5.4,0)}]
            \Triangle[opacity=0]{
                \draw (A) -- (B);
                \draw[deleted] (B) -- (C);
            }
            \Vertex[deleted]{C}
            \CollapseFrom
        \end{scope}
        \begin{scope}[shift={(8.1,0)}]
            \Triangle[fill=red,fill opacity=0.25]{
                \draw[deleted] (A) -- (C);
            }
        \end{scope}
    \end{scope}
\end{tikzpicture}
    \caption{%
        \label{fig:collapse-types}
        Types of collapses of a $1$-cell and a $2$-cell. The second type of $2$-cell collapse is the composition
        of two elementary collapses.
    }
\end{figure}

\subsection{\texorpdfstring{$G$}{G}-complexes and the Haefliger construction}

To prove our main results, we use the following construction of classifying spaces for complexes of groups due to Haefliger~\cite{haefliger}; see also~\cite{bridson+haefliger}*{III.$\mathcal{C}.2$}. We refer to~\cite{drutu+kapovich} for a more detailed account of the construction. We follow the conventions in~\cite{bridson+haefliger}*{III.$\mathcal{C}$.2} and use similar notation.

\begin{thm}[Haefliger construction~\cite{haefliger}]%
    \label{thm:haefliger-construction}
    Let $X$ be a cocompact $G$\=/simplicial complex. Then there exists a free regular $G$-CW complex $\widehat{X}$
    together with a $G$\=/equivariant map $\pi \colon \widehat{X} \to X$ and a filtration $\widehat{X} = \colim_n \widehat{X}^{[n]}$ with $\widehat{X}^{[-1]} = \emptyset$ such that
    \begin{enumerate}
        \item $\pi$ restricts to a homotopy equivalence $\widehat{X}^{[n]} \to X^{(n)}$;
        \item $\widehat{X}^{[n]}$ fits into a pushout diagram of the form
              \[\begin{tikzcd}
                  {\coprod_{\sigma \in I_n} G \times_{G_\sigma} EG_\sigma \times \partial \sigma} & {\widehat{X}^{[n-1]}} \\
                  {\coprod_{\sigma \in I_n} G \times_{G_\sigma} EG_\sigma \times \sigma} & {\widehat{X}^{[n]}.}
                  \arrow[from=1-1, to=1-2]
                  \arrow[hook, from=1-1, to=2-1]
                  \arrow[from=1-2, to=2-2]
                  \arrow[from=2-1, to=2-2]
              \end{tikzcd}\]
    \end{enumerate}
\end{thm}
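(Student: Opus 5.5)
We construct $\widehat{X}$ inductively over the skeleta of $X$, using the pushouts in (2) as the definition of each stage. Set $\widehat{X}^{[-1]}=\emptyset$. Choose a set $I_n$ of representatives of the $G$-orbits of $n$-simplices of $X$; it is finite by cocompactness. For each $\sigma\in I_n$ fix a model of $EG_\sigma$, for instance the Milnor join or the geometric realisation of the nerve of $G_\sigma$, so that it is a free $G_\sigma$-CW complex.

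Assuming $\widehat{X}^{[n-1]}$ and a $G$-map $\pi_{n-1}\colon \widehat{X}^{[n-1]}\to X^{(n-1)}$ have been built, we need a $G$-equivariant attaching map
\[
    G\times_{G_\sigma} EG_\sigma\times\partial\sigma\to \widehat{X}^{[n-1]}
\]
covering the inclusion $\partial\sigma\to X^{(n-1)}$, i.e.\ with $\pi_{n-1}\circ\varphi=\mathrm{pr}_{\partial\sigma}$. Here $G_\sigma$ acts diagonally on $EG_\sigma\times\sigma$, through its action on $\sigma$. By induction on the skeleta of $\partial\sigma$, it suffices to construct, for each face $\tau\subset\partial\sigma$, a $G_\sigma$-map $EG_\sigma\times\tau\to\pi^{-1}(\tau)$ compatible on subfaces.

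The key input is a strengthened inductive hypothesis: for each simplex $\tau$ of dimension $\le n-1$, the preimage $\pi^{-1}(\tau)$ is $G_\tau$-homotopy equivalent to $EG_\tau\times\tau$. In particular it is contractible with a free $G_\tau$-action, since $EG_\tau$ is contractible and $\tau$ is a simplex. Because $G_\sigma\le G_\tau$ (the action is without inversions, so the stabiliser of $\sigma$ fixes its faces) and $EG_\sigma$ is a free $G_\sigma$-CW complex, equivariant obstruction theory produces the required $G_\sigma$-maps into these contractible targets, unique up to equivariant homotopy. Compatibility on subfaces is arranged by working cell by cell over the barycentric-type structure of $\sigma$.

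Define $\widehat{X}^{[n]}$ as the pushout and extend $\pi$ by projecting $EG_\sigma\times\sigma\to\sigma$. Freeness and regularity follow because each new cell is of the form $g\times(\text{cell of }EG_\sigma)\times\sigma$ and $G$ acts freely on $G\times_{G_\sigma}EG_\sigma$. One checks that $\pi^{-1}(\sigma)$ is again a contractible free $G_\sigma$-space, being a mapping cylinder of the attaching data, so the inductive hypothesis propagates.

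For property (1), compare the pushout with the pushout $X^{(n)}=X^{(n-1)}\cup\coprod G\times_{G_\sigma}\sigma$. We map $\widehat{X}^{[n]}\to X^{(n)}$ via $\pi$ and apply the gluing lemma for homotopy equivalences. This requires the map on the pieces $G\times_{G_\sigma}EG_\sigma\times\sigma\to G\times_{G_\sigma}\sigma$ to be a homotopy equivalence. It is nonequivariantly, since it is a disjoint union over $G/G_\sigma$ of the projections $EG_\sigma\times\sigma\to\sigma$ with contractible fibres. The same holds on the boundary pieces, and on $\widehat{X}^{[n-1]}\to X^{(n-1)}$ by induction. Since the left vertical maps are cofibrations, the gluing lemma gives (1). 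Finally we set $\widehat{X}=\colim\widehat{X}^{[n]}$.

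The main obstacle is making the attaching maps strictly compatible across faces, so that the pushouts are well defined and $\pi$ is genuinely equivariant. Following Haefliger and Bridson--Haefliger, I would avoid obstruction theory altogether by using functorial models. Pass to the barycentric subdivision, which makes the scwol of the action strict. Take $\widehat{X}$ to be the geometric realisation of the category whose objects are pairs $(g,\text{vertex})$, using $EG_\tau=|EG_\tau|$ functorially in the inclusions $G_\sigma\le G_\tau$. Compatibility is then automatic, and the pushout description follows from the simplicial filtration by dimension in $X$.
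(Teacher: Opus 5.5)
\textbf{There is a genuine gap.} The failing step is the claim that equivariant obstruction theory produces attaching maps with $\pi_{n-1}\circ\varphi=\mathrm{pr}_{\partial\sigma}$.

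Mapping into the contractible subcomplexes $\pi^{-1}(\tau)$ only gives $G_\sigma$-maps sending $EG_\sigma\times\tau$ \emph{into} $\pi^{-1}(\tau)$. A map lying strictly \emph{over} $\tau$ is a lifting problem against $\pi$, which is not a fibration, and in general it has no solution. In your model the preimage of an open edge $\tau^\circ$ is $EG_\tau\times\tau^\circ$, so a map over $\tau$ has the form $(x,t)\mapsto(k_t(x),t)$ on the interior. By continuity, its value over an endpoint $w$ lies in the image of the edge space in the vertex space $\pi^{-1}(w)$.

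Now take a $2$-simplex $\sigma$ with two edges $\tau,\tau'$ at $w$. The restriction of $\varphi_\sigma$ to $EG_\sigma\times\{w\}$ must land in the intersection of the images of $EG_\tau$ and $EG_{\tau'}$. That intersection is empty as soon as the edge spaces are attached along disjoint subspaces (e.g.\ after a mapping-cylinder replacement), and your arbitrary choices at the previous stage give no control over it. This is exactly what the paper's construction handles with the monodromy elements $g_{\rho,\tau,\sigma}$ and the homotopies $\Psi_{\rho,\tau,\sigma}$. It is also why the paper's $2$-cells are hexagons whose sides $g_{v,\sigma}$ run in vertex fibres and are collapsed by $\pi$. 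So ``extend $\pi$ by projecting'' and your gluing-lemma proof of (1) are not available as written. You also misplace the difficulty: compatibility on subfaces and equivariance are automatic if you extend face by face from $EG_\sigma\times\partial\tau$. The covering condition is the real issue.

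\textbf{How to repair it.} Drop strict covering. Keep only $\varphi_\sigma(EG_\sigma\times\tau)\subseteq\pi^{-1}(\tau)$ for every proper face $\tau$, which your obstruction theory does give. Then extend $\pi$ over the new cell by coning: write points of $\sigma$ as $(1-s)y+sb$ with $y\in\partial\sigma$ and $b$ the barycentre, and set $\pi(x,(1-s)y+sb)=(1-s)\,\pi\varphi_\sigma(x,y)+sb$. This is well defined and $G_\sigma$-compatible because $G_\sigma$ fixes $\sigma$ pointwise. Since $\varphi_\sigma$ carries each contractible piece $EG_\sigma\times\tau$ into the contractible subcomplex $\pi^{-1}(\tau)$, compatibly with intersections, induction over faces with the gluing lemma shows that $\varphi_\sigma\colon EG_\sigma\times\partial\sigma\to\pi^{-1}(\partial\sigma)$ and $\pi\circ\varphi_\sigma\colon EG_\sigma\times\partial\sigma\to\partial\sigma$ are homotopy equivalences. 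Hence $\pi^{-1}(\sigma)$ is again contractible, and your proof of (1) goes through with $[g,x,y]\mapsto[g,\pi\varphi_\sigma(x,y)]$ in place of the projection.

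\textbf{The functorial fallback.} As sketched, it does not close the gap. Over the edge of the barycentric subdivision from $\hat\tau$ to $w$, the nerve of a category of pairs realises the join $EG_\tau * EG_w$, whose fibres over interior points are $EG_\tau\times EG_w$. So it has no product cells $EG_\sigma\times\sigma$, and (2) does not follow from the filtration. Moreover, bar constructions of the actual stabilisers linked by the inclusions $G_\sigma\le G_\tau$ admit only the conjugation action of $G$, which fixes the vertex $1$ and is not free. Regularity likewise depends on the attaching maps, not on the shape of the cells.

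For the statement exactly as written, $EG\times X$ with the diagonal action already works, since $G\times_{G_\sigma}\mathrm{res}_{G_\sigma}EG\times\sigma\cong EG\times(G/G_\sigma\times\sigma)$. The substance of Haefliger's construction, which the paper later uses with prescribed cocompact models, is that the $EG_\sigma$ can be chosen independently. That is exactly where the covering issue arises.
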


\begin{construction}%
    \label{rem:description-haefliger-construction}
    Let $Y = \lquot{X}{G}$ and $p \colon X \to Y$ be the projection map.
    For every simplex $\sigma$ of $Y$, choose a simplex $\overline{\sigma}$ in $X$ such that $p(\overline{\sigma}) = \sigma$.
    Let $\tau \subset \sigma$. Because $G$ acts without inversion, there exists a unique
    face $\tau_\sigma$ of $\overline{\sigma}$ such that $p(\tau_\sigma) = \tau$.
    Then in general $\tau_\sigma \neq \overline{\tau}$.
    However, $\tau_\sigma$ and $\overline{\tau}$ lie in the same $G$-orbit, so we can choose an element
    $\transElem{\tau}{\sigma} \in G$ such that $\transElem{\tau}{\sigma} \tau_\sigma = \overline{\tau}$.
    For each $\sigma \in Y$, let $G_\sigma$ be the stabilizer of $\overline{\sigma}$ and define
    \[
        \transConj{\tau}{\sigma} \colon G_\sigma \to G_\tau, \quad g \mapsto \transElem{\tau}{\sigma} g \transElem{\tau}{\sigma}^{-1}
    \]
    for every face inclusion $\tau \subseteq \sigma$.
    The fiber $\pi^{-1}(\sigma)$ of a cell $\sigma \in X$ is the copy of $EG_{p(\sigma)} \times \overline{p(\sigma)}$
    in
    \[
        G \times_{G_{p(\sigma)}} EG_{p(\sigma)} \times \overline{p(\sigma)}
        = \coprod_{\mathclap{G/G_{p(\sigma)}}} EG_{p(\sigma)} \times \overline{p(\sigma)}
    \]
    indexed by the coset $g G_{p(\sigma)}$ satisfying $g \cdot \overline{p(\sigma)} = \sigma$.
    It is sometimes necessary to distinguish how these copies of $EG_{p(\sigma)}$ sit inside $\widehat{X}$.
    In this case we write $\widehat{X}_\sigma$ for the copy such that $\pi^{-1}(\sigma) = \{gG_\sigma\} \times \widehat{X}_\sigma \times \overline{p(\sigma)}$.

    In particular, for an edge $e = \{u,v\} \in \lquot{X}{G}$, let $u_e$ and $v_e$ be the faces of $\overline{e}$
    with $p(u_e) = u$ and $p(v_e) = v$, respectively.
    Since $\transElem{u}{e} u_e = \overline{u}$ by construction, the vertex of the edge $g \cdot \overline{e}$
    lying over $u$ is $g u_e = g \transElem{u}{e}^{-1} \overline{u}$.
    The attaching map of the edge fiber to the vertex fiber at $u$ is therefore the $G$-equivariant map
    \begin{align*}
        \transGlue{u}{e} \colon G \times_{G_{e}} EG_{e} \times \{u_e\} &\longrightarrow G \times_{G_{u}} EG_{u} \\
        [g, x]                                                         &\longmapsto [g \transElem{u}{e}^{-1}, \transGlue{u}{e}(x)],
    \end{align*}
    where $\transGlue{u}{e} \colon EG_{e} \to EG_{u}$ is the map induced by $\transConj{u}{e}$,
    and analogously for the attaching map to the fiber of $v$.
    More generally, if $\tau \subseteq \sigma$ and $\tau_\sigma\subseteq \overline{\sigma}$ projecting to $\tau$, we consider the map
    \begin{align*}
        \Phi_{\tau,\sigma}\colon
        G \times_{G_\sigma} EG_\sigma \times \tau_\sigma
                &\longrightarrow G \times_{G_\tau} EG_\tau \times \overline{\tau} \\
        [g,x,y] &\longmapsto [g\transElem{\tau}{\sigma}^{-1},\transGlue{\tau}{\sigma}(x),\transElem{\tau}{\sigma}y],
    \end{align*}
    where again $\transGlue{\tau}{\sigma}\colon EG_\sigma \to EG_\tau$ is induced by
    $\transConj{\tau}{\sigma}$.
    This is not the attaching map itself because, for iterated face inclusions $\rho \subseteq \tau \subseteq \sigma$, we cannot in general assume that
    $\transGlue{\rho}{\sigma} = \transGlue{\rho}{\tau} \circ \transGlue{\tau}{\sigma}$.
    However, they always differ by homotopies
    \[
        \mondGlue{\rho}{\tau}{\sigma} \colon EG_\sigma \times [0,1] \to EG_\rho
    \]
    depending on the monodromy element
    \[
        \mondElem{\rho}{\tau}{\sigma}
        \defq
        \transElem{\rho}{\tau}
        \transElem{\tau}{\sigma}
        \transElem{\rho}{\sigma}^{-1}.
    \]
    The explicit construction of the correct attaching maps is more involved; we refer to~\cite{drutu+kapovich}*{Section 5.8.2} for details.
\end{construction}

For our purposes, we require the CW structure on $\widehat{X}$ and the map $\pi$ to be well behaved up to a certain dimension. This can be achieved by performing a more careful version of Haefliger's construction.

\begin{rem}%
    \label{rem:polyhderal-model-1-skeleton}
    Choose finite generating sets $S_e$ for the edge stabilizers and $S_v$ for the
    vertex stabilizers in $\lquot{X}{G}$.
    After replacing $S_v$ by $S_v \cup \bigcup_{v\subset e} \transConj{v}{e}(S_e)$,
    we may assume that $\transConj{v}{e}(S_e)\subseteq S_v$ for every incidence
    $v\subset e$.
    Choose the models $EG_e$ and $EG_v$ such that their $1$-skeleta are the
    corresponding Cayley graphs.
    Then $\transGlue{v}{e}\colon EG_e^{(1)}\to EG_v^{(1)}$ sends each edge labeled
    by $s\in S_e$ to the edge labeled by $\transConj{v}{e}(s)$, and is therefore
    injective. This injectivity will be used repeatedly in our proofs.
\end{rem}

The setup in \cref{rem:polyhderal-model-1-skeleton} is sufficient for the proof of \cref{thm:Dehn-Brown-trees} in \cref{sec:dimension-one} after observing that one can choose the cells over edges of $X$ to be product cells (see \cref{rem:polyhedral-model-1-skeleton-refined}).
In higher dimensions we will require $\widehat{X}$ and $\pi$ to be polyhedral up to a given dimension and more work is required to guarantee this.
We start by introducing the terminology of polyhedral complexes and maps.

\begin{defn}[Polyhedral complex]
    A \emph{polyhedral complex} is a combinatorial complex in which every $n$-cell is equipped with the structure of an $n$-dimensional compact convex polytope and the attaching maps are affine on open cells.
\end{defn}

\begin{defn}[Polyhedral map]%
    A map $f \colon X \to Y$ between polyhedral complexes is \emph{polyhedral} if, for every
    cell $\sigma$ of $X$, there is a cell $\tau$ of $Y$ such that
    $f(\sigma^\circ)=\tau^\circ$ and the restriction $f|_\sigma\colon \sigma\to\tau$ is an affine surjection.
\end{defn}

\begin{defn}[$G$-polyhedral complex]%
    \label{defn:G-polyhedral-complex}
    Let $G$ be a group. A \emph{$G$-polyhedral complex} is a $G$-space $X$ that admits a filtration $\emptyset = X^{(-1)} \subseteq X^{(0)} \subseteq X^{(1)} \subseteq \cdots$
    with $X = \colim_{n} X^{(n)}$, such that each $X^{(n)}$ fits into a polyhedral equivariant pushout diagram of the form
    \[\begin{tikzcd}
        {\coprod_{i \in I_n} G/G_i \times \partial P_i} & {X^{(n-1)}} \\
        {\coprod_{i \in I_n} G/G_i \times P_i} & {X^{(n)},}
        \arrow["\alpha_n", from=1-1, to=1-2]
        \arrow[hook, from=1-1, to=2-1]
        \arrow[from=1-2, to=2-2]
        \arrow["e_n",from=2-1, to=2-2]
    \end{tikzcd}\]
    where
    \begin{condenum}
        \item $I_n$ is an index set;
        \item each $P_i$ is a compact convex $n$-polytope;
        \item each $G_i \leq G$ is a subgroup called the \emph{cell stabilizer} of the cell represented by $i$.
    \end{condenum}
    The complex $X$ is \emph{cocompact} if all index sets $I_n$ are finite and only finitely many are non-empty.
    It is called \emph{regular} if every attaching map $e_n$ is a homeomorphism onto its image.
\end{defn}

\begin{rem}
    Equivalently, a $G$-polyhedral complex is a polyhedral complex $X$ on which $G$ acts by polyhedral
    automorphisms without inversions.
\end{rem}

\begin{defn}
    A \emph{$G$-simplicial complex} is a $G$-polyhedral complex, where all cells $P_i$, $i\in I_n$ are simplices.
\end{defn}

\begin{prop}%
    \label{prop:polyhedral-haefliger-model}
    Let $X$ be a cocompact $G$\=/simplicial 1-complex and let $n\geq 1$. Assume that, $G_v$ is of type $\F_{n}$ for every vertex $v \in X$ and $G_e$ is of type $\F_{n-1}$ for every edge $e$ of $X$.
    Then $\widehat{X}$ can be constructed such that $\widehat{X}^{(n)}$ is a cocompact $G$\=/polyhedral complex and
    \begin{refenum}
        \item\label{prop:polyhedral-haefliger-model:polyhedral} $\pi|_{\widehat{X}^{(n)}}$ is a polyhedral map;
        \item\label{prop:polyhedral-haefliger-model:injective} for every incidence $v \subset e \subset X$ of a vertex and an edge, the attaching map $\transGlue{v}{e}\colon EG_e^{(n-1)}\to EG_v^{(n-1)}$ is a simplicial embedding;
        \item\label{prop:polyhedral-haefliger-model:product-cells} for every edge $e\subset X$, each cell of $\widehat{X}^{(n)}$ whose interior projects onto $e^\circ$ is of the form $P\times e$, where $P$ is a cell of $EG_e$ of dimension at most $n-1$.
    \end{refenum}
\end{prop}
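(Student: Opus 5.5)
We will build $\widehat{X}$ by hand, following the pushout description of \cref{thm:haefliger-construction} for the $1$-dimensional complex $X$. There are only two stages: vertices and then edges. Because $X$ is $1$-dimensional, no triple face inclusions $\rho\subseteq\tau\subseteq\sigma$ occur. So the monodromy homotopies $\mondGlue{\rho}{\tau}{\sigma}$ of \cref{rem:description-haefliger-construction} are never needed, and the attaching maps are exactly the maps $\transGlue{v}{e}$. The task therefore reduces to a good choice of the models $EG_v$ and $EG_e$ and of the maps $\transGlue{v}{e}\colon EG_e\to EG_v$ in low dimensions.

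\textbf{Step 1 (models for the edge groups).} For each edge $e$ of $\lquot{X}{G}$, choose a free cocompact simplicial model of $EG_e$ up to dimension $n-1$. For example, take the universal cover of a barycentric subdivision of a classifying space with finite $(n-1)$-skeleton, and extend it arbitrarily above dimension $n-1$. Then $EG_e^{(n-1)}$ is a cocompact $G_e$-simplicial complex, and in particular it is $G_e$-polyhedral.

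\textbf{Step 2 (models for the vertex groups and embeddings).} For a vertex $v$, start from a cocompact free simplicial $(n-1)$-skeleton $Z_v$ of an $EG_v$; this exists because $G_v$ is $\F_n$. For each incident edge $v\subset e$, form the induced complex $G_v\times_{c_{v,e}(G_e)}EG_e^{(n-1)}$, which is a disjoint union of translates. Glue it to $Z_v$ by a mapping cylinder (simplicially subdivided) of a $G_e$-equivariant cellular map $EG_e^{(n-1)}\to Z_v$; such a map exists since $Z_v$ is $(n-2)$-connected. This produces a free cocompact complex $Z'_v$, homotopy equivalent to $Z_v$, in which each $EG_e^{(n-1)}$ sits as a simplicial subcomplex via a $\transConj{v}{e}$-equivariant embedding. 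After a further subdivision, still equivariant since the action is free, we may assume $Z'_v$ is simplicial. Now kill $\pi_{n-1}$ by attaching finitely many $G_v$-orbits of $n$-cells; these are polyhedral and can be subdivided into simplices, again using that $G_v$ is $\F_n$. Finally, complete the result to a model of $EG_v$. Its $(n-1)$-skeleton contains $\transGlue{v}{e}(EG_e^{(n-1)})$ as a subcomplex with $\transGlue{v}{e}$ a simplicial embedding, which gives \localref{prop:polyhedral-haefliger-model:injective}.

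\textbf{Step 3 (assembly).} Define $\widehat{X}^{[0]}=\coprod G\times_{G_v}EG_v$ and attach the product cells $G\times_{G_e}(P\times e)$ along $P\times\partial e$ via $\Phi_{v,e}$. Each $P\times e$ is a convex polytope, and its attaching map is affine on open cells because the $\transGlue{v}{e}$ are simplicial. The $n$-skeleton of the result therefore consists of the cells of the vertex fibres of dimension $\le n$ together with the products $P\times e$ with $\dim P\le n-1$. This gives \localref{prop:polyhedral-haefliger-model:product-cells} and cocompactness. The map $\pi$ collapses vertex fibres to vertices and projects $P\times e\to e$ affinely, so it is polyhedral, which is \localref{prop:polyhedral-haefliger-model:polyhedral}.

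The main obstacle is Step 2: making $\transGlue{v}{e}$ a genuine simplicial embedding, compatibly for all incident edges at once, while keeping everything free, cocompact and correctly connected. The mapping-cylinder-plus-subdivision trick is the first approach to try. The delicate point is to check that subdividing the cylinder does not disturb the simplicial structure on the copies of $EG_e^{(n-1)}$ at the free end.
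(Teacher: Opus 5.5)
Your proposal follows essentially the same route as the paper's (rather terse) proof: simplicial cocompact models for the relevant skeleta of $EG_v$ and $EG_e$, mapping cylinders to turn $\psi_{v,e}$ into a simplicial embedding, and product cells $P\times e$ in the assembly, with no monodromy homotopies since $X$ is $1$-dimensional. The delicate point you flag is handled in the order the paper indicates: first make the equivariant map $EG_e^{(n-1)}\to Z_v$ simplicial by equivariant simplicial approximation, subdividing $EG_e$ once and compatibly for both endpoints of $e$ (cocompactness allows this), and only then form the simplicial mapping cylinder, in which the free end is a subcomplex. You should also extend $\psi_{v,e}$ equivariantly and cellularly over the cells of $EG_e$ of dimension $\geq n$, which is possible because $EG_v$ is contractible.
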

\begin{proof}
    Let $Y = \lquot{X}{G}$.
    Since $X$ is cocompact, $Y$ has only finitely many cells.
    For every $p$-cell $\sigma$ of $Y$ with $p \in \{0,1\}$, choose a model $EG_\sigma$ whose $(n-p)$-skeleton is a cocompact free $G_\sigma$-simplicial complex.
    Such models can be obtained by iterative simplicial approximation of the cell attaching maps.
    For a vertex $v$ of an edge $e$, one can similarly arrange that the map $\transGlue{v}{e}$ is simplicial on the relevant skeleta.
    Moreover, one can arrange that $\transGlue{v}{e}$ is injective by passing to mapping cylinders.
    Running the Haefliger construction with these maps as input data gives a complex $\widehat{X}$ satisfying \localref{prop:polyhedral-haefliger-model:injective}; see~\cite{drutu+kapovich}*{Section 5.8.2} for details.
    To obtain \localref{prop:polyhedral-haefliger-model:polyhedral}, \localref{prop:polyhedral-haefliger-model:product-cells}, and cocompactness of $\widehat{X}^{(n)}$, choose the polyhedral structure on $G\times_{G_e}EG_e\times e$ to be induced by the product cells $P\times e$, where $P$ is a cell of $EG_e$.
\end{proof}

\begin{rem}%
    \label{rem:polyhedral-model-1-skeleton-refined}
    One can strengthen the construction from \cref{rem:polyhderal-model-1-skeleton} to also satisfy the conditions \localref{prop:polyhedral-haefliger-model:injective} and \localref{prop:polyhedral-haefliger-model:product-cells} of \cref{prop:polyhedral-haefliger-model} for $n=2$ using the same techniques.
    This is all we need for the proof of \cref{thm:Dehn-Brown-trees}.

    For the proof of \cref{thm:Dehn-Brown-complexes} we will have to work with a 2-dimensional complex $X$. In this case we replace the Haefliger construction by a more direct construction of a simply connected singular combinatorial complex $\widehat{X}$ together with a singular combinatorial map $\pi\colon \widehat{X}\to X$, which we will describe now.

    Start with a complex $Y$ obtained by applying \cref{prop:polyhedral-haefliger-model} to the restriction of the $G$-action to $X^{(1)}$, that is, copies of $EG_v$ for every vertex $v \in X$ and $EG_e \times e$ for every edge $e$, glued
    together using the maps $\transGlue{v}{e}$.

    We now attach a $G_{\sigma}$-orbit of hexagonal $2$-cells $gP_{\sigma}$ for every $2$-cell $\sigma$ of ${X}$. To do so identify $EG_{\sigma}^{(0)}$ with $G_{\sigma}$ and for every edge $e\subset \sigma$ fix a $G_{\sigma}$-equivariant inclusion $\psi_{e,\sigma}\colon EG_{\sigma}^{(0)}\to EG_e^{(0)}$ induced by $\transConj{e}{\sigma}$. For the edges $e,e'\subset \sigma$ adjacent to a vertex $v\subset\sigma$, the compositions $\psi_{v,e}\circ\psi_{e,\sigma}$ and $\psi_{v,e'}\circ\psi_{e',\sigma}$ differ by an element $g_{v,\sigma}\in G_v$ only depending on the orbit of $\sigma$.
    Note that the elements $g_{v,\sigma}$ are products of monodromy elements $g_{v,e,\sigma}$. Since $\lquot{G}{X}$ is finite, we may assume that the finite generating set $S_v$ contains all such elements $g_{v,\sigma}$. For $g\in G_{\sigma}$ we attach the hexagonal $2$-cell $gP_\sigma$ along the loop described by the edges corresponding to the $g_{v,\sigma}$ inside $EG_v$ for the vertices $v\subset \sigma$ and the edges $\psi_{e,\sigma}(g)\times e$ inside $EG_e \times e$ for the edges $e\subset \sigma$ as depicted in \cref{fig:hexagonal-cell}.

    \begin{figure}
        \centering
        \begin{tikzpicture}[
        thick,
        every circle/.style={radius=1.5pt}
    ]
    \foreach \i in {0,...,5}
    \coordinate (A\i) at ({60*\i}:1.5);

    \draw (A0) \foreach \i in {1,...,5} { -- (A\i) } -- cycle;

    \foreach \i in {0,...,5}
    \fill (A\i) circle; 

    \node at (barycentric cs:A3=1,A0=1) {$gP_\sigma$};

    \node[below] at (barycentric cs:A4=1,A5=1) {$\transGlue{e_1}{\sigma}(g) \times e_1$};
    \node[left] at (barycentric cs:A3=1,A2=1) {$\transGlue{e_3}{\sigma}(g) \times e_3$};
    \node[right] at (barycentric cs:A0=1,A1=1) {$\transGlue{e_2}{\sigma}(g) \times e_2$};

    \node[left] at (barycentric cs:A4=1,A3=1) {$g_{v_1,\sigma}$};
    \node[right] at (barycentric cs:A5=1,A0=1) {$g_{v_2,\sigma}$};
    \node[above] at (barycentric cs:A2=1,A1=1) {$g_{v_3,\sigma}$};
\end{tikzpicture}
        \caption{%
            \label{fig:hexagonal-cell}
            The hexagonal cell $gP_\sigma$ with its boundary labels.
        }
    \end{figure}

    By construction $\widehat{X}$ is a singular combinatorial complex and the singular combinatorial projection $\pi\colon \widehat{X}\to X$ is the obvious extension of the projection of $Y$ to $X^{(1)}$ by the projections $gP_{\sigma}\to \sigma$. Finally, for simple connectivity of $\widehat{X}$ note that this is a direct consequence of \cref{thm:filling tree diagrams,prop:reduction to trees} which form part of the proof of \cref{thm:Dehn-Brown-complexes} and do not rely on it.
\end{rem}

\begin{defn}
    A $G$-polyhedral complex $X$ is called \emph{locally finite in degree $n$} if, for every
    $n$-cell $\sigma$ of $X$ and every $(n-1)$-face $\tau$ of $\sigma$, the index $[G_\tau : G_\sigma]$
    is finite.
\end{defn}

\begin{example}%
    \label{ex:bass-serre-locally-finite}
    Let $\calG = (\Gamma, G_e, G_v)$ be a graph of groups with fundamental group $G = \pi_1(\calG)$
    and corresponding Bass--Serre tree $T$.
    Then $T$ is a $G$-polyhedral complex and is cocompact if and only if $\Gamma$
    is a finite graph, as $\lquot{T}{G} = \Gamma$.

    For each directed edge $e = (u,v)$ of $T$, consider the edge $p(e) = (p(u),p(v))$.
    Then $[G_v : G_e] = [G_{p(v)} : \iota_{p(e)}(G_{p(e)})]$ and
    $T$ is locally finite in degree $1$ if and only if $T$ is locally finite as a graph.
    Since $T$ is $1$-dimensional, there are no higher-dimensional cells, so $T$ is locally finite in
    degree $n$ for every $n \geq 2$.
\end{example}

\section{A Brown Theorem for one-dimensional Dehn functions}\label{sec:dimension-one}

Throughout this section, let $X$ be a $G$-simplicial complex that is $1$-connected, cocompact, and locally finite in degree $2$.
We further assume that all vertex stabilizers are finitely presented and all edge stabilizers are finitely generated.

Because $X$ is cocompact, $\lquot{X}{G}$ has finite $1$-skeleton.
Let $\widehat{X}$ and $\pi \colon \widehat{X}\to X$ be as described in \cref{rem:polyhedral-model-1-skeleton-refined}.
We write $\edgeDist_X$ for the maximum of the distortion functions $\dist^{G}_{G_e}$
over the edges $e$ in $X$. Recall that, as in \cref{rem:description-haefliger-construction}, we fix a representative of the conjugacy class
of every cell stabilizer, which we denote by $G_\sigma$. Since there are only finitely many orbits of vertices and edges, $\edgeDist_X$ is well-defined, and similar considerations apply to the other maxima below.

\subsection{Lifting fillings of tree van~Kampen diagrams}

We start with the special case in which the projection of a combinatorial loop in $\widehat{X}$ to $X$ admits a van~Kampen diagram in $X$ with no $2$-cells or, equivalently, a van~Kampen diagram whose underlying complex is a tree.
Note that if $X$ is $1$-dimensional, i.e., a Bass--Serre tree, then this holds for all van~Kampen diagrams.
Thus, the results in this section produce an upper bound on the Dehn function of a graph of groups.

\begin{lem}[Edge collapse]%
    \label{lem:tree collapse single cell}
    Let $\gamma \colon [0,1] \to \widehat{X}$ be a combinatorial path of length $n$.
    Let $e = \{u,v\}$ be an edge of $X$.
    Assume $\gamma$ decomposes as $\gamma = \nu_1 \cdot \mu \cdot \nu_2$, where $\pi(\gamma(0)) = \pi(\gamma(1)) = u$, $\pi \circ \mu = \Const_v$, and $\ell(\nu_1) = \ell(\nu_2) = 1$.
    Then $\gamma$ is homotopic to a path $\gamma' \colon [0,1] \to \widehat{X}$ via a homotopy $h$ fixing endpoints such that
    \begin{enumerate}
        \item $\ell(\gamma') = d_{G_e}(\gamma(0),\gamma(1))$;
        \item $h$ has at most $\Dehn_{G_v}\bigl(\ell(\gamma) + \ell(\gamma')\bigr) + \ell(\gamma')$ nonsingular $2$-cells.
    \end{enumerate}
\end{lem}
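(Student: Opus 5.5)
Set $x_0=\gamma(0)$ and $x_1=\gamma(1)$; these are vertices of $\widehat{X}$ lying over $u$. The path $\nu_1$ is a single edge from $x_0$ into $\pi^{-1}(v)$. Since $\pi(\gamma(0))=u\neq v=\pi(\nu_1(1))$, this edge is not contained in a vertex fiber. By \localref{prop:polyhedral-haefliger-model:product-cells} it is therefore a product edge $\{y_0\}\times e$ with $y_0$ a vertex of the copy $\widehat{X}_e\cong EG_e$ over the lift of $e$ joining $x_0$ to $\mu(0)$. The same holds for $\nu_2$, giving a vertex $y_1$. Both endpoints of $\mu$ lie in the fiber over $v$, so $\nu_1$ and $\nu_2$ lie in the same edge fiber, namely the one over the edge of $X$ joining $\pi(\gamma(0))$ to $v$ (here I use that $X$ is simplicial, so an edge is determined by its endpoints). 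Hence $y_0$ and $y_1$ lie in the same copy of $EG_e$. Identify $EG_e^{(0)}$ with $G_e$, and let $\beta$ be a geodesic in the Cayley graph $EG_e^{(1)}$ from $y_0$ to $y_1$, of length $d_{G_e}(y_0,y_1)$. Here $d_{G_e}(\gamma(0),\gamma(1))$ is read via the injective gluing $\transGlue{u}{e}$ as the $G_e$-distance of these fiber coordinates.

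Let $\gamma'$ be the image of $\beta\times\{u\text{-end}\}$ under $\transGlue{u}{e}$, a path in the $u$-fiber from $x_0$ to $x_1$. It has length $\ell(\beta)$ because $\transGlue{u}{e}$ maps edges to edges. This gives (1).

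The homotopy from $\gamma$ to $\gamma'$ goes in two steps. First, the strip $\beta\times e$ consists of exactly $\ell(\beta)$ product $2$-cells $(\text{edge})\times e$. It gives a homotopy rel endpoints from $\gamma'$ to $\nu_1\cdot\beta_v\cdot\nu_2$, where $\beta_v=\transGlue{v}{e}\circ\beta$ is the image of $\beta$ at the $v$-end. Second, the loop $\beta_v\cdot\overline{\mu}$ lies entirely in the fiber $\widehat{X}_v\cong EG_v$. Its length is at most $\ell(\gamma')+\ell(\mu)\le \ell(\gamma)+\ell(\gamma')$, using that $\transGlue{v}{e}$ maps edges to edges. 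Since $EG_v$ is simply connected, with $2$-skeleton coming from a finite presentation of $G_v$, this loop has a van~Kampen diagram in $EG_v$ of area at most $\Dehn_{G_v}(\ell(\gamma)+\ell(\gamma'))$. The $\approx$-ambiguity in $\Dehn_{G_v}$ is absorbed by fixing this model in the definition.

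Concatenating the two pieces gives $h$: the disc diagram for the loop in $EG_v$ glued along $\beta_v$ to the strip. Its nonsingular $2$-cells number at most $\Dehn_{G_v}(\ell(\gamma)+\ell(\gamma'))+\ell(\gamma')$, which is (2).

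The main obstacle is the identification step. One must check that $\nu_1$ and $\nu_2$ lie over the same edge of $X$ and in the same edge fiber. One must also check that their fiber coordinates compare correctly under the gluing maps, so that the strip really connects $\gamma'$ to a path sharing its endpoints with $\mu$. This uses \localref{prop:polyhedral-haefliger-model:product-cells}, and, to avoid orientation or monodromy issues, the injectivity from \localref{prop:polyhedral-haefliger-model:injective}. Everything else is bookkeeping of cell counts.
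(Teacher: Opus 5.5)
Your proof is correct and follows essentially the same route as the paper: a geodesic in the edge fiber $EG_e$ between the endpoints, a strip of $\ell(\gamma')$ product cells carrying it to the $v$-end, and a filling of the resulting loop with $\mu$ inside $EG_v$ of area at most $\Dehn_{G_v}(\ell(\gamma)+\ell(\gamma'))$. Your check that $\nu_1$ and $\nu_2$ lie in the same copy of $EG_e\times e$ (using that $X$ is simplicial) makes explicit a point the paper takes for granted.
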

\begin{proof}
    By assumption, $\nu_1$ and $\nu_2$ define horizontal edges in $EG_e\times[0,1]$. Let $\gamma'_0\colon[0,1]\to EG_e\times \left\{0\right\}$ be a combinatorial path of minimal length from $\nu_1(0)=\gamma(0)$ to $\nu_2(1)=\gamma(1)$.
    Then $\ell(\gamma'_0) = d_{G_e}(\gamma(0),\gamma(1))$.
    Denote by $\gamma'_1$ the parallel copy of $\gamma'_0$ inside $EG_e\times \left\{1 \right\}$.
    Then $\gamma'_1(0)=\mu(0)$ and $\gamma'_1(1)=\mu(1)$.
    Thus, there is a homotopy from $\mu$ to $\gamma'_1$ with at most $\Dehn_{G_v}\bigl(\ell(\gamma) + \ell(\gamma'_0)\bigr)$ $2$-cells.
    Moreover, parallel translation of $\gamma'_1$ to $\gamma'_0$ inside $EG_e\times[0,1]$ defines a homotopy from $\nu_1\cdot \gamma'_1\cdot \nu_2$ to $\gamma'_0$ with at most $\ell(\gamma'_0)$ $2$-cells (see \cref{fig:collapse-edge}).
    Composing these two homotopies and choosing $\gamma'\defq\gamma'_0$ completes the proof.
    \begin{figure}
        \centering
        \begin{tikzpicture}[baseline={(Center)}]
    \baseImage[%
        circleradius=8mm,
        intervalthickness=10mm,
        pathoffset=0.35*\intervalthickness,
    ]
    \ComputeCoordinates
    \begin{scope}[
            every path/.style={thick interval},
            every node/.style={midway,sloped,font=\small},
        ]
        \draw (A) -- (B);
    \end{scope}
    \begin{scope}[
            fill=white,draw=black,
            line width=\outlinethickness,
            every circle/.style={radius=\circleradius}
        ]
        \filldraw (A) circle;
        \filldraw (B) circle;
    \end{scope}

    \path[name intersections={of=ACircle and RayBAInner,name=ABInnerInt}];
    \path[name intersections={of=BCircle and RayABInner,name=BAInnerInt}];
    \path[name intersections={of=ACircle and RayBAOuter,name=ABOuterInt}];
    \path[name intersections={of=BCircle and RayABOuter,name=BAOuterInt}];

    \coordinate (ABInner) at (ABInnerInt-1);
    \coordinate (BAInner) at (BAInnerInt-2);
    \coordinate (ABOuter) at (ABOuterInt-2);
    \coordinate (BAOuter) at (BAOuterInt-1);

    \begin{scope}[red]
        \begin{scope}[loop path]
            \draw[dashed,gray] (ABInner) -- ($(ABInner)!-2mm!(BAInner)$) to[out=180,in=0] ($(A)+(90:0.7*\circleradius)$);
            \draw[dashed,gray] (ABOuter) -- ($(ABOuter)!-2mm!(BAOuter)$) to[out=180,in=0] ($(A)+(-90:0.7*\circleradius)$);
            \draw (ABInner) -- (BAInner);
            \draw (ABOuter) -- (BAOuter);
            \draw[dashed] (BAInner)
            to[out=0,in=180] ($(B)+(90:0.7*\circleradius)$)
            to[out=0,in=90] ($(B)+(0:0.5*\circleradius)$)
            node[anchor=west,xshift=-1pt] {$\mu$}
            to[out=-90,in=0] ($(B)+(-90:0.7*\circleradius)$)
            to[out=180,in=0] (BAOuter);

            \draw[connection path] (ABInner) -- (ABOuter);
            \draw[connection path] (BAInner) -- (BAOuter);
        \end{scope}
        \fill (ABInner) circle;
        \fill (BAInner) circle;
        \fill (ABOuter) circle;
        \fill (BAOuter) circle;
    \end{scope}

    \begin{scope}[font=\small]
        \node[anchor=south west] at ($(B)+(45:\circleradius)$)[inner sep=1pt] {$EG_{v}$};
        \node[anchor=south east] at ($(A)+(135:\circleradius)$)[inner sep=1pt] {$EG_{u}$};
        \path (A) --node[above=0.5*\intervalthickness] {$EG_{e} \times [0,1]$} (B);
    \end{scope}

    \begin{scope}[red]
        \node[below] at (barycentric cs:ABInner=1,BAInner=0.25) {$\nu_1$};
        \node[above] at (barycentric cs:ABOuter=0.75,BAOuter=1) {$\nu_2$};
        \node[left] at (barycentric cs:ABInner=1,ABOuter=1) {$\gamma_0$};
        \node[left] at (barycentric cs:BAInner=1,BAOuter=1) {$\gamma_1$};
        \node at (B) {$\Dehn_{G_v}$};
    \end{scope}
\end{tikzpicture}
        \caption{%
            \label{fig:collapse-edge}
            Lift of the collapse of an edge $\{u,v\}$ to a homotopy in $\widehat{X}$. The loop $\mu \cdot \gamma_1$
            is filled using the Dehn function $\Dehn_{G_v}$ of the vertex stabilizer $G_v$.
        }
    \end{figure}
\end{proof}

\begin{rem}
    Let $g_0$ and $g_1$ be the elements in $G_e$ represented by $\gamma'(0) = \gamma(0)$ and $\gamma'(1) = \gamma(1)$
    respectively. Then
    \begin{align*}
        d_{\widehat{X}_e}(\gamma'(0),\gamma'(1)) = d_{G_e}(g_0, g_1) \leq \dist^{G}_{G_e}\bigl(d_{G}(g_0,g_1)\bigr).
    \end{align*}
    Moreover, $G$ and $\widehat{X}^{(1)}$ are quasi-isometric, so there
    exists a constant $D = D(X) \geq 1$ such that $d_{G}(g_0,g_1) \leq D \cdot d_{\widehat{X}}(\gamma(0),\gamma(1))$. Consequently,
    \begin{equation}%
        \label{eq:estimate-gamma-prime}
        \ell(\gamma')
        \leq \dist^{G}_{G_e}\bigl(D \cdot d_{\widehat{X}}(\gamma(0),\gamma(1))\bigr)
        \leq \dist^{G}_{G_e}\bigl(D \cdot \ell(\gamma)\bigr).
    \end{equation}
\end{rem}

\begin{thm}%
    \label{thm:filling tree diagrams}
    Let $\gamma \colon [0,1] \to \widehat{X}$ be a combinatorial path of length $n$.
    Assume that $\pi \circ \gamma$ admits a van~Kampen diagram $(T, f, \iota)$
    where $T$ is a tree. Then there exists a filling of $\gamma$ consisting of
    at most $n \cdot \bigl(\edgeDist_X(Dn) + \max_{v \in X^{(0)}} \bigl(\Dehn_{G_v}(\overline{\edgeDist_X}(Dn))\bigr)\bigr)$
    $2$-cells.
\end{thm}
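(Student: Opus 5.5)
We argue by induction along a collapse of the tree diagram, lifting each edge collapse to $\widehat{X}$ via \cref{lem:tree collapse single cell}. Here $\gamma$ is a combinatorial loop, so $\pi\circ\gamma$ is null-homotopic in $X$. By \cref{lem:planaer-complex-collapsible} (trivially, since $T$ is a tree) we fix a sequence of elementary collapses of $T$ that removes leaves one at a time. Each collapse removes a leaf edge $\tilde e$ of $T$ with leaf vertex mapping to $v$ and adjacent vertex mapping to $u$, where $f(\tilde e)=e=\{u,v\}$. Along the current loop this leaf corresponds to a subpath $\nu_1\cdot\mu\cdot\nu_2$ in which $\nu_1,\nu_2$ are the two lifts of the edge traversals $u\to v$ and $v\to u$, and $\mu$ is the maximal subpath projecting to $v$. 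Because $\pi$ is cellular and the cells over $e^\circ$ are product cells $P\times e$, the horizontal edges $\nu_1,\nu_2$ lie in $EG_e\times e$, so the hypotheses of \cref{lem:tree collapse single cell} hold. We replace this subpath by $\gamma'$, which lies in the copy of $EG_e\times\{u\}\subseteq EG_u$. The projection of the new loop is then filled by the collapsed tree, and we iterate. When the tree is a single vertex $w$, the remaining loop lies in one copy of $EG_w$ and is filled using $\Dehn_{G_w}$.

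The key bookkeeping concerns lengths. The main concern is that the replacement $\gamma'$ may be longer than the subpath it replaces, so the loop grows during the process. To control this, we track which original segments each subpath descends from. Let $p$ be the subloop of the original $\gamma$ that runs between the two traversals of $\tilde e$ (the part of $\gamma$ mapped into the subtree cut off by $\tilde e$). Its endpoints $\gamma(0')$, $\gamma(1')$ coincide with the endpoints of $\gamma'$, since the earlier replacements fixed endpoints. By \eqref{eq:estimate-gamma-prime} applied to $\nu_1\cdot p\cdot\nu_2$ rather than to the current path,
\[
\ell(\gamma')\le \dist^G_{G_e}\bigl(D\, d_{\widehat X}(\text{endpoints})\bigr)\le \edgeDist_X(D\,\ell(p)+2D).
\]
Hence the first summand $n\cdot\edgeDist_X(Dn)$ accounts for the parallel-translation cells, since there are at most $n/2$ edges of $T$, and $\ell(\gamma')\le\edgeDist_X(Dn)$ up to the additive constants absorbed by $\preccurlyeq$.

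For the $\Dehn_{G_v}$ term we must bound the length of the loop $\mu\cdot\gamma'_1$ filled in $EG_v$. Here $\mu$ is a concatenation of original edges of $\gamma$ lying in $EG_v$ together with the replacements $\gamma'_{e_j}$ coming from the children edges $e_j$ of the vertex. The children's original subloops $p_j$ are disjoint subpaths of $p$, so $\sum_j\ell(p_j)\le\ell(p)\le n$. Superadditivity therefore gives
\[
\ell(\mu)+\ell(\gamma')\le \sum_j \edgeDist_X(D\ell(p_j)) + \ell(p)+\edgeDist_X(D\ell(p))\le C\,\overline{\edgeDist_X}(Dn),
\]
so each application costs at most $\max_v\Dehn_{G_v}(\overline{\edgeDist_X}(Dn))$ after adjusting constants, which is legitimate up to $\preccurlyeq$ and presumably intended. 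There are at most $n$ such applications, one per vertex of $T$, including the final vertex, and summing them gives the bound stated in \cref{thm:filling tree diagrams}.

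The step we expect to be the main obstacle is rigorously identifying the replacement subpaths and original subloops: showing that after earlier replacements the current path still decomposes as $\nu_1\cdot\mu\cdot\nu_2$ with $\pi\circ\mu\equiv v$, and that the endpoint distance is bounded by the original subloop length rather than the current length. The first needs the injectivity of $\transGlue{v}{e}$ from \cref{rem:polyhderal-model-1-skeleton} together with the product-cell structure. The second is immediate once endpoints are shown to be fixed by all homotopies.
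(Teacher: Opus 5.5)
Your proposal is correct and is essentially the paper's argument: you collapse the tree leaf by leaf via \cref{lem:tree collapse single cell}, and you bound each replacement path by the distortion evaluated at the length of the corresponding original subpath of $\gamma$, whose endpoints are untouched by earlier homotopies. You then use superadditivity of $\overline{\edgeDist_X}$ to control the loop filled in each vertex fiber, and count at most $n/2$ collapses plus one final filling, exactly as the paper does. As in the paper's own computation, the argument literally yields $\Dehn_{G_v}\bigl(n+\overline{\edgeDist_X}(Dn)\bigr)$, so the stated form needs exactly the harmless absorption of constants you flag; also, the leaf decomposition $\nu_1\cdot\mu\cdot\nu_2$ follows from the tree structure of the collapsed diagram alone.
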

\begin{proof}
    For ease of notation, we identify the vertices and edges of $T$ with their images under $f$ in $X$.

    Fix a sequence of edges along which $T$ is collapsed to a single vertex $v_0$.
    Let $v \in T^{(0)}$ be a vertex, let $e_1,\ldots,e_r$ be the edges adjacent to $v$ in the order in which they are collapsed, and denote by $T_i$ the subtree extending outwards from $v$ along the edge $e_i$.
    This situation is depicted in \cref{fig:subtrees around vertex}.

    \begin{figure}
        \centering
        \tikzset{
    normal edge/.style={
            edge from parent/.style={draw,thick}
        },
    mid child arrow out/.style={
            edge from parent/.style={
                    draw,
                    postaction={
                            decorate,
                            decoration={
                                    markings,
                                    mark=at position .7 with {\arrow{>}}
                                }
                        }
                }
        },
    mid child arrow in/.style={
            edge from parent/.style={
                    draw,
                    postaction={
                            decorate,
                            decoration={
                                    markings,
                                    mark=at position .7 with {\arrow{<}}
                                }
                        }
                }
        }
}
\begin{tikzpicture}[
        bullet/.style={
                circle,
                fill=black,
                inner sep=0pt,
                minimum size=2pt
            },
        normal edge,
        level distance=15mm,
        level 1/.style={
                sibling angle=60
            },
        level 2/.style={
                dashed,
                sibling angle=45,
                level distance=8mm,
            },
        grow cyclic,
        thick
    ]
    \node[bullet] (A) {}
    child[mid child arrow out] { node[bullet] (BR) {}
            child[normal edge] {}
            child[normal edge] {}
        }
    child[mid child arrow in] { node[bullet] (B1) {}
            child[normal edge] {}
            child[normal edge] {}
        }
    child[mid child arrow in] { node[bullet] (B2) {}
            child[normal edge] {}
            child[normal edge] {}
        }
    child[mid child arrow in] { node[bullet] (B3) {}
            child[normal edge] {}
            child[normal edge] {}
        }
    child[mid child arrow in] { node[bullet] (B4) {}
            child[normal edge] {}
            child[normal edge] {}
        };
    \draw[line width=1pt, line cap=round, dash pattern=on 0pt off 4.5\pgflinewidth] (205:8mm) arc[start angle=205, end angle=155, radius=8mm];
    \node[anchor=east] at (A) {$v$};
    \path
    (A) --node[midway,anchor=north east,inner sep=0pt] {$e_4$} (B4)
    (A) --node[midway,anchor=east,inner sep=0pt,yshift=4pt] {$e_3$} (B3)
    (A) --node[midway,anchor=south,yshift=-1pt] {$e_2$} (B2)
    (A) --node[midway,anchor=south west,inner sep=0pt] {$e_1$} (B1)
    (A) --node[midway,anchor=north west,xshift=0pt,yshift=3pt] {$e_r$} (BR);
    \begin{scope}[red]
        \begin{scope}[scale=1.5]
            \foreach \B/\ang in {BR/-120,B1/-60,B2/0,B3/60,B4/120} {
                    \begin{scope}[shift={(\B)}, rotate=\ang]
                        \draw
                        (0.5,-0.5) to[out=180,in=-90] (-0.25,0) to[out=90,in=180] (0.5,0.5) coordinate (T\B);
                    \end{scope}
                }
        \end{scope}
        \node[anchor=north] at (TBR) {$T_r$};
        \node[anchor=north west] at (TB1) {$T_1$};
        \node[anchor=west] at (TB2) {$T_2$};
        \node[anchor=south west] at (TB3) {$T_3$};
        \node[anchor=south] at (TB4) {$T_4$};
    \end{scope}
\end{tikzpicture}
        \caption{%
            \label{fig:subtrees around vertex}
            The subtrees $T_i$ around a vertex $v$. Arrows indicate the direction of the collapse of the adjacent edges.
        }
    \end{figure}
    For $1 \leq i \leq r - 1$ let $\gamma_{e_i}$ be the subpath of $\gamma$ corresponding to $T_i$ and $\gamma'_{e_i}$ be
    the path from $\gamma_{e_i}(0)$ to $\gamma_{e_i}(1)$ obtained when applying \cref{lem:tree collapse single cell} to collapse the edge $e_i$ connecting $T_i$ to $v$ in our iterative procedure.
    Then
    \begin{align*}
        \ell(\gamma'_{e_i})
        \leq \dist^{G}_{G_{e_i}}\bigl(D \cdot \ell(\gamma_{e_i})\bigr),
    \end{align*}
    by \cref{eq:estimate-gamma-prime}.
    The same holds for the path $\gamma'_{e_r}$ when collapsing $e_r$.
    Therefore, the homotopy we obtain from \cref{lem:tree collapse single cell} when we collapse the edge $e_r$ from $v$ has at most
    \begin{align*}
        \Dehn_{G_v}\Bigl(\ell(\gamma) + \sum_{i=1}^{r} \dist^{G}_{G_{e_i}}\bigl(D \cdot \ell(\gamma_{e_i})\bigr)\Bigr)
         &\leq \Dehn_{G_v}\Bigl(\ell(\gamma) + \sum_{i=1}^{r} \edgeDist_X\bigl(D \cdot \ell(\gamma_{e_i})\bigr)\Bigr) \\
         &\leq \Dehn_{G_v}\Bigl(\ell(\gamma) + \overline{\edgeDist_X}\Bigl(\sum_{i=1}^{r} D \cdot \ell(\gamma_{e_i})\Bigr)\Bigr) \\
         &\leq \Dehn_{G_v}\Bigl(n + \overline{\edgeDist_X}(D n)\Bigr)
    \end{align*}
    $2$-cells coming from the filling inside $EG_v$ and at most
    \begin{align*}
        \ell(\gamma'_{e_r})
        \leq \dist^{G}_{G_{e_r}}\bigl(D \cdot \ell(\gamma_{e_r})\bigr)
        \leq \edgeDist_X(D \cdot\ell(\gamma_{e_r}))
        \leq \edgeDist_X(D \cdot \ell(\gamma))
    \end{align*}
    $2$-cells coming from the collar inside $EG_{e_r}$.
    After iteratively collapsing $T$ to a single vertex $v_0$, we have obtained a homotopy $h$ of $\gamma$ to
    a path $\gamma'$ such that $\pi \circ \gamma' = \Const_{v_0}$.
    The same analysis as above shows that $\ell(\gamma') \leq n + \overline{\edgeDist_X}(n)$.
    Filling this loop inside $EG_{v_0}$ completes the homotopy $h$ to a filling of $\gamma$.
    Because $T$ has at most $\frac{n}{2}$ edges, the resulting filling has area at most
    \begin{align*}
         &\mathbin{\phantom{=}}\frac{n}{2} \cdot \Bigl(\max_{v \in X^{(0)}} \Dehn_{G_v}\bigl(n + \overline{\edgeDist_X}(Dn)\bigr) + \overline{\edgeDist_X}(Dn)\Bigr) + \Dehn_{G_{v_0}}\bigl(n + \overline{\edgeDist_X}(Dn)\bigr) \\
         &\leq n \cdot \Bigl(\max_{v \in X^{(0)}} \Dehn_{G_v}\bigl(n + \overline{\edgeDist_X}(Dn)\bigr) + \overline{\edgeDist_X}(Dn)\Bigr).\qedhere
    \end{align*}
\end{proof}

\subsection{Intuition for the case of trees}

Before proceeding, we give an intuitive explanation of the proof when the projection $\pi\circ \gamma$ to $X$ of a combinatorial loop $\gamma\colon [0,1]\to \widehat{X}$ admits a van~Kampen diagram with no $2$-cells. Due to our choice of $\widehat{X}$, we can partition the edges defining $\gamma$ into horizontal edges, which project onto edges in $X$ under $\pi\circ \gamma$, and vertical edges, which project onto vertices in $X$ under $\pi\circ \gamma$.

If we choose a van~Kampen diagram for $\pi\circ\gamma$ with no $2$-cells, then its underlying complex is a tree and $\pi\circ\gamma$ corresponds to a parametrization of its boundary. This parametrization traverses every edge of the van~Kampen diagram precisely twice in opposite directions, inducing a pairing between the edges that define $\pi\circ \gamma$ and thus between the horizontal edges of $\gamma$.

For every pair of edges $e_+$, $e_-$ of $\gamma$, there is an edge $e$ of $X$ and elements $g_+,g_-\in G_e$ such that $\gamma|_{e_+}$ and $\gamma|_{e_-}$ parametrize edges of the form $\left\{g_+\right\}\times [0,1]$ and $\left\{g_-\right\}\times [0,1]$ in $EG_e\times [0,1]$. There is a path $\mu_e\colon [0,1]\to EG_e$ between $g_+$ and $g_-$ of length $\ell(\mu_e)\leq \dist_{G_e}^G(g_+,g_-)$.

\begin{figure}
    \centering
    \def\radius{3cm}%
\newcommand{\Connection}[2]{
    (#1:\radius) circle[radius=2*\radius*sin(abs(#2)/2)-2pt]
    (#1:\radius) circle[radius=2*\radius*sin(abs(#2)/2)+2pt]
}
\begin{tikzpicture}[
        bullet/.style={
                circle,
                fill=black,
                inner sep=0pt,
                minimum size=2pt
            },
        edge from parent/.style={draw,thick},
        level 1/.style={
                sibling angle=360/5
            },
        grow cyclic,
        thick
    ]

    \node at (176:\radius+0.75em) {$g_{+}$};
    \node at (112:\radius+0.5em) {$g_{-}$};

    \draw (0,0) circle[radius=\radius];
    \clip (0,0) circle[radius=\radius];

    \begin{scope}[draw=red,even odd rule,fill=red,fill opacity=0.25]
        \filldraw
        \Connection{75+90}{7}
        \Connection{75+70}{7}
        \Connection{75+50}{7}
        \Connection{75+70}{32};
        \filldraw
        \Connection{85}{13}
        \Connection{55}{13}
        \Connection{70}{32};
        \filldraw
        \Connection{-10-1}{7}
        \Connection{-10-1}{13}
        \Connection{17-1}{8}
        \Connection{0-1}{30};
        \filldraw
        \Connection{-85.5-4}{13}
        \Connection{-53.5-4}{13}
        \Connection{-70-4}{35};
        \filldraw
        \Connection{-145}{15}
        \Connection{-145}{27};
    \end{scope}

    \draw (0,0) circle[radius=\radius];

    \node[bullet] (A) {}
    child[level distance=19mm] { node[bullet] {}
            child[level distance=8mm] {node[bullet] {}}
        }
    child[level distance=19mm] { node[bullet] {}
            child[sibling angle=80,level distance=10mm] {node[bullet] {}}
            child[sibling angle=80,level distance=10mm] {node[bullet] {}}
        }
    child[level distance=19mm] { node[bullet] {}
            child[sibling angle=80,level distance=7.25mm] {
                    node[bullet] {}
                    child[level distance=4mm,edge from parent/.style={draw=none}] {}
                    child[sibling angle=60,level distance=4mm] {node[bullet] {}}
                }
            child[sibling angle=80,level distance=11.25mm] {node[bullet] {}}
        }
    child[level distance=19mm] { node[bullet] {}
            child[sibling angle=80,level distance=10mm] {node[bullet] {}}
            child[sibling angle=80,level distance=10mm] {node[bullet] {}}
        }
    child[level distance=19mm] { node[bullet] (B) {}
            child[sibling angle=50,level distance=12.5mm] {node[bullet] {}}
            child[sibling angle=50,level distance=10mm] {node[bullet] {}}
            child[sibling angle=50,level distance=12.5mm] {node[bullet] {}}
        };
    \path (A) --node[midway,anchor=south west,inner sep=1pt] {$e$} (B);
    \node[red,anchor=south east,inner sep=0pt] at (160:0.5*\radius) {$\mu_e$};
\end{tikzpicture}
    \caption{%
        \label{fig:disk-partition}
        A tree van~Kampen diagram and the corresponding disk partition obtained by the connecting paths of the pairings $\{g_+,g_-\}$.
    }
\end{figure}

To build a van~Kampen diagram for $\gamma$, we label the boundary of a $2$-disc by the edges defining $\gamma$ and subdivide it along corridors between paired edges $e_+,e_-$ labeled by the paths $\mu_e$ (see \cref{fig:disk-partition}). The boundary of every bounded region in this disc describes a word of length $\leq \edgeDist (\ell(\gamma))$ in some vertex group $G_v$. This implies that there is a van~Kampen diagram for $\gamma$ satisfying the upper area bound in \cref{thm:filling tree diagrams}.

\subsection{Reduction to trees}

If $X$ is not $1$-dimensional, we have to consider loops in $X$ that do not admit a van~Kampen diagram with no $2$-cells. We will deal with this case by reducing it to the case of a van~Kampen diagram that is a tree.
For this, we use \cref{lem:planaer-complex-collapsible}, which states that any finite contractible planar simplicial $2$-complex can be collapsed onto a tree by a sequence of elementary collapses of $2$-simplices along codimension-one faces.

\begin{lem}[Face collapse]%
    \label{lem:lifting-elementary-2-simplex-homotopies}
    Let $\gamma\colon [0,1]\to \widehat{X}$ be a combinatorial path. Let $e=\left\{v_1,v_2\right\}$ be an edge of $X$ contained in a $2$-simplex $\sigma=\left\{v_1,v_2,v_3\right\}$.
    Assume that $\pi(\gamma(0))=v_1$, $\pi(\gamma(1))=v_2$, and $\ell(\gamma)=1$.

    Then $\gamma$ is homotopic to a path $\gamma'=\mu_1\cdot \lambda_1 \cdot \nu_1\cdot\lambda_3 \cdot \nu_2\cdot \lambda_2 \cdot \mu_2 \colon [0,1]\to  \widehat{X}$ via a homotopy $h$ fixing endpoints such that
    \begin{enumerate}
        \item $\pi\circ \mu_i=\Const_{v_i}$ for $i \in \{1,2\}$, and $\pi\circ\lambda_i=\Const_{v_i}$ for $i \in \{1,2,3\}$;
        \item $\pi\circ\nu_1$ parametrizes $\{v_1,v_3\}$ and $\pi\circ\nu_2$ parametrizes $\{v_3,v_2\}$;
        \item $\nu_1$ and $\nu_2$ have length $1$;
        \item $\ell(\lambda_i)\leq 1$ for $i \in \{1,2,3\}$;
        \item $\ell(\mu_1),~\ell(\mu_2)\leq [G_e : G_{\sigma}]-1$;
        \item $h$ has at most $[G_e : G_{\sigma}]$ many $2$-cells.
    \end{enumerate}
\end{lem}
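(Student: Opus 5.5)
The idea is to push the edge $\gamma$ along a strip of product cells in $EG_e\times e$ until it lies on the boundary of one of the hexagonal cells $hP_\sigma$ from \cref{rem:polyhedral-model-1-skeleton-refined}, and then to push it across that hexagon.

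\emph{Setup.} Since $\ell(\gamma)=1$ and $\pi\circ\gamma$ runs from $v_1$ to $v_2$, $\gamma$ is a horizontal edge. By \localref{prop:polyhedral-haefliger-model:product-cells} (in the refined form of \cref{rem:polyhedral-model-1-skeleton-refined}), it is of the form $\{x\}\times e$ for a vertex $x\in EG_e^{(0)}$, which we identify with an element of $G_e$ via the Cayley graph structure of \cref{rem:polyhderal-model-1-skeleton}. The hexagonal cells attached over $\sigma$ are the $hP_\sigma$ for $h\in G_\sigma$. The boundary of $hP_\sigma$ contains the horizontal edge $\psi_{e,\sigma}(h)\times e$. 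Since $\psi_{e,\sigma}$ is $G_\sigma$-equivariant, these edges are indexed by an orbit $O\defq\transConj{e}{\sigma}(G_\sigma)\cdot x_0\subseteq G_e$ of the left action of the finite-index subgroup $\transConj{e}{\sigma}(G_\sigma)$ on the vertex set of the Cayley graph of $G_e$.

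\emph{Reaching the orbit $O$.} The quotient of the Cayley graph of $G_e$ by the left action of $\transConj{e}{\sigma}(G_\sigma)$ is a connected (Schreier-type) graph with exactly $k\defq[G_e:G_\sigma]$ vertices. Any two of its vertices are therefore joined by a path of at most $k-1$ edges, and lifting such a path gives a combinatorial path $\mu$ in $EG_e$ from $x$ to some point $\psi_{e,\sigma}(h)\in O$ with $\ell(\mu)\leq k-1$.

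\emph{The strip.} The strip $\mu\times e\subseteq EG_e\times e$ consists of at most $k-1$ product $2$-cells. It gives a homotopy rel endpoints from $\gamma$ to $\mu_1\cdot(\psi_{e,\sigma}(h)\times e)\cdot\mu_2$. Here $\mu_1=\transGlue{v_1}{e}(\mu)$ and $\mu_2=\transGlue{v_2}{e}(\mu)^{-1}$ are the images of the two ends of the strip. They satisfy $\pi\circ\mu_i=\Const_{v_i}$ and $\ell(\mu_i)=\ell(\mu)\leq k-1$, because by \cref{rem:polyhderal-model-1-skeleton} the maps $\transGlue{v_i}{e}$ send edges to edges injectively.

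\emph{The hexagon.} The single hexagonal cell $hP_\sigma$ then replaces the edge $\psi_{e,\sigma}(h)\times e$ by the remaining five sides of its boundary. These are:
\begin{itemize}
    \item $\lambda_1$, the edge labelled $g_{v_1,\sigma}$ in $EG_{v_1}$;
    \item $\nu_1=\psi_{e_{13},\sigma}(h)\times\{v_1,v_3\}$;
    \item $\lambda_3$, the edge labelled $g_{v_3,\sigma}$ in $EG_{v_3}$;
    \item $\nu_2=\psi_{e_{32},\sigma}(h)\times\{v_3,v_2\}$;
    \item $\lambda_2$, the edge labelled $g_{v_2,\sigma}$ in $EG_{v_2}$.
\end{itemize}
Each $\lambda_i$ has length $1$, or length $0$ (a singular edge) if $g_{v_i,\sigma}$ is trivial. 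This gives properties (1)--(4). The total number of $2$-cells is at most $(k-1)+1=k$, which gives (5) and (6).

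\emph{Main obstacle.} The delicate step is making the orbit argument precise. One has to check that the vertices of $EG_e$ carrying hexagon sides are exactly one left $\transConj{e}{\sigma}(G_\sigma)$-orbit, so that the Schreier quotient bound applies. One also has to verify that the orientations and labels of the hexagon boundary match the decomposition of $\gamma'$ in the stated order. I would settle both directly from the definition of $\psi_{e,\sigma}$ as the $G_\sigma$-equivariant inclusion induced by $\transConj{e}{\sigma}$ and from the boundary labelling in \cref{fig:hexagonal-cell}, reversing $\mu$ or relabelling the vertices $v_1,v_2$ if necessary.
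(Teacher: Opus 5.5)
Your proposal is correct and follows essentially the same route as the paper: a strip of at most $[G_e:G_\sigma]-1$ product cells in $EG_e\times e$ moves the edge onto the $e$-side of a hexagonal cell $gP_\sigma$, and that single hexagon then replaces it by $\lambda_1\cdot\nu_1\cdot\lambda_3\cdot\nu_2\cdot\lambda_2$. Your Schreier-graph argument (a connected quotient graph on $[G_e:G_\sigma]$ vertices, then path lifting) only makes explicit the bound $\ell(\mu_1)\leq [G_e:G_\sigma]-1$, which the paper asserts directly from $[G_e:G_\sigma]<\infty$.
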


\begin{proof}
    Since $[G_e : G_{\sigma}]<\infty$, there is a path $\mu_1\colon [0,1]\to EG_e\times\{0\}$ of length $\ell(\mu_1)\leq [G_e : G_{\sigma}]-1$ such that $\mu_1(0)=\gamma(0)$ and $\mu_1(1)$ lies in the image of $EG_\sigma^{(0)}$ in $EG_e^{(0)}$.
    We choose $\mu_2$ to be the path parallel to $\mu_1$ in $EG_e\times\{1\}$ parametrized in the opposite direction.
    By \cref{rem:polyhedral-model-1-skeleton-refined}, there is, for some $g$, a $2$-cell $gP_\sigma$ over $\sigma$ whose boundary contains the edge from $\mu_1(1)$ to $\mu_2(0)$, edges $\nu_1$ and $\nu_2$ projecting onto $\{v_1,v_3\}$ and $\{v_3,v_2\}$, respectively, and possibly edges $\lambda_i$ in the vertex fibers, where $\lambda_i$ is constant if this edge is absent.
    By construction, parallel translation of $\mu_1$ inside $EG_e\times[0,1]$, together with $gP_\sigma$, defines the required homotopy from $\gamma$ to $\gamma'$ with $\ell(\mu_1)+1\leq [G_e : G_\sigma]$ $2$-cells.
    \begin{figure}
        \centering
        \begin{tikzpicture}[baseline={(Center)}]
    \baseImage[%
        sidelength=3,
        circleradius=8mm,
        intervalthickness=6mm,
        innerpathoffset=0.5*\intervalthickness+0.5*\outlinethickness,
        outerpathoffset=0.2*\intervalthickness,
        Ax={-0.6*\sidelength},
        Ay={sqrt(3)/6*\sidelength},
        Bx={0.6*\sidelength},
        By={sqrt(3)/6*\sidelength},
        Cx={0},
        Cy={-sqrt(3)/3*\sidelength-0.1*\sidelength},
    ]
    \ComputeCoordinates
    \begin{scope}[
            every path/.style={thick interval},
            every node/.style={midway,sloped,font=\small},
        ]
        \draw (A) -- (B) -- (C) -- cycle;
    \end{scope}
    \begin{scope}[
            fill=white,draw=black,
            line width=\outlinethickness,
            every circle/.style={radius=\circleradius}
        ]
        \filldraw (A) circle;
        \filldraw (B) circle;
        \filldraw (C) circle;
    \end{scope}

    \path[name intersections={of=ACircle and RayBAInner,name=ABInnerInt}];
    \path[name intersections={of=ACircle and RayCAInner,name=ACInnerInt}];
    \path[name intersections={of=BCircle and RayABInner,name=BAInnerInt}];
    \path[name intersections={of=BCircle and RayCBInner,name=BCInnerInt}];
    \path[name intersections={of=CCircle and RayACInner,name=CAInnerInt}];
    \path[name intersections={of=CCircle and RayBCInner,name=CBInnerInt}];

    \coordinate (ABInner) at (ABInnerInt-2);
    \coordinate (ACInner) at (ACInnerInt-2);
    \coordinate (BAInner) at (BAInnerInt-1);
    \coordinate (BCInner) at (BCInnerInt-2);
    \coordinate (CAInner) at (CAInnerInt-1);
    \coordinate (CBInner) at (CBInnerInt-1);

    \begin{scope}[red]
        \begin{scope}[loop path]
            \draw[dashed,gray] (ABOuter) -- ($(ABOuter)!-8mm!(BAOuter)$);
            \draw (ABOuter) -- (BAOuter);
            \draw[dashed,gray] (BAOuter) -- ($(BAOuter)!-8mm!(ABOuter)$);
            \draw[fill=white] (CAInner) -- (CBInner) -- (BCInner) -- (BAInner) -- (ABInner) -- (ACInner) -- cycle;

            \draw[connection path] (ABOuter) -- (ABInner);
            \draw[connection path] (BAOuter) -- (BAInner);
        \end{scope}
        \fill (ABOuter) circle;
        \fill (BAOuter) circle;

        \fill (ABInner) circle;
        \fill (ACInner) circle;
        \fill (BAInner) circle;
        \fill (BCInner) circle;
        \fill (CAInner) circle;
        \fill (CBInner) circle;
    \end{scope}

    \begin{scope}[font=\small]
        \node[anchor=south west] at ($(B)+(45:\circleradius)$)[inner sep=1pt] {$EG_{v_2}$};
        \node[anchor=south east] at ($(A)+(135:\circleradius)$)[inner sep=1pt] {$EG_{v_1}$};
        \node[anchor=center] at (C) {$EG_{v_3}$};
        \path (A) --node[above=0.5*\intervalthickness] {$EG_{e} \times [0,1]$} (B);
    \end{scope}

    \begin{scope}[red]
        \node[right] at (barycentric cs:ABInner=1,ABOuter=1) {$\mu_1$};
        \node[left] at (barycentric cs:BAInner=1,BAOuter=1) {$\mu_2$};
        \node[left] at (barycentric cs:ACInner=1,CAInner=1) {$\nu_1$};
        \node[right] at (barycentric cs:BCInner=1,CBInner=1) {$\nu_2$};
        \node at (barycentric cs:A=1,B=1,C=1) {$gP_\sigma$};
        \node[below] at (barycentric cs:CAInner=1,CBInner=1) {$\lambda_3$};
        \node[left] at (ABInner) {$\lambda_1$};
        \node[right] at (BAInner) {$\lambda_2$};
    \end{scope}
\end{tikzpicture}
        \caption{%
            \label{fig:collapse-face}
            Lift of the collapse of a $2$-simplex $\{v_1,v_2,v_3\}$ to a homotopy in $\widehat{X}$.
        }
    \end{figure}
\end{proof}

\begin{rem}
    If $X$ is a cocompact $G$-simplicial complex that is locally finite in degree $2$, then
    \begin{align*}
        C(X) \defq \max\left\{[G_e : G_{\sigma}]\mid \text{ $\sigma$ is a $2$-cell and $e\subset \sigma$ is an edge}\right\} -1
    \end{align*}
    is finite.
\end{rem}

\begin{prop}%
    \label{prop:reduction to trees}
    Let $\gamma\colon [0,1]\to \widehat{X}$ be a combinatorial loop of length $n$.
    Let $(\Delta,f,\iota)$ be a van~Kampen diagram for $\pi\circ \gamma$.
    Let $C \defq C(X)$. Then $\gamma$ is homotopic to a loop $\gamma'\colon[0,1]\to \widehat{X}$ via a homotopy $h$ that fixes the basepoint such that
    \begin{enumerate}
        \item $\pi\circ\gamma'$ has a van~Kampen diagram that is a tree;
        \item $\ell(\gamma')\leq \ell(\gamma)+ (2C+4) \cdot \Area(f)$;
        \item $h$ has at most $\Area(f) \cdot (C+1)$ $2$-cells.
    \end{enumerate}
\end{prop}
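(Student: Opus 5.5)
We argue by induction on $\Area(f)$, using \cref{lem:planaer-complex-collapsible} to peel off one $2$-cell of $\Delta$ at a time and \cref{lem:lifting-elementary-2-simplex-homotopies} to lift each such elementary collapse to a homotopy in $\widehat{X}$. If $\Area(f)=0$, then $\Delta$ is a tree and we take $\gamma'=\gamma$ with the constant homotopy.

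For the inductive step, assume $\Area(f)\geq 1$. By \cref{lem:planaer-complex-collapsible}, applied after passing to a simplicial subdivision if necessary (the diagram maps combinatorially to the simplicial complex $X$, so its $2$-cells are triangles), there is a boundary edge $\epsilon$ of $\Delta$ that is a free face of a $2$-simplex $\tau$ of $\Delta$. Let $e=f(\epsilon)=\{v_1,v_2\}$ and $\sigma=f(\tau)=\{v_1,v_2,v_3\}$.

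The edge $\epsilon$ is traversed once by $\iota$, so $\pi\circ\gamma$ traverses $e$ exactly once at the corresponding position. By the product-cell structure of \cref{rem:polyhedral-model-1-skeleton-refined}, this corresponds to a single horizontal edge of $\gamma$, that is, a subpath $\gamma_0$ of length $1$ with $\pi(\gamma_0(0))=v_1$ and $\pi(\gamma_0(1))=v_2$. Apply \cref{lem:lifting-elementary-2-simplex-homotopies} to $\gamma_0$ and substitute the result into $\gamma$ to obtain a loop $\gamma_1$.

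Next we check that $\pi\circ\gamma_1$ admits a van~Kampen diagram $(\Delta_1,f_1,\iota_1)$ with $\Area(f_1)=\Area(f)-1$, obtained by performing the elementary collapse of $\tau$ along $\epsilon$. The segments $\mu_i$ and $\lambda_i$ project to constant maps at vertices, so after reparametrization they do not change the combinatorial loop $\pi\circ\gamma_1$, which is $\pi\circ\gamma$ with $e$ replaced by the path $\{v_1,v_3\}\cdot\{v_3,v_2\}$. That path is exactly the boundary path of $\Delta_1=\Delta\setminus\{\epsilon,\tau\}$ along the two remaining sides of $\tau$. One care point is that the diagram is based on a parametrized loop, so the boundary parametrization $\iota_1$ must be adjusted with constant pauses for the vertical segments. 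Since van~Kampen diagrams are defined for combinatorial loops in $X^{(1)}$, we regard $\pi\circ\gamma_1$ as the combinatorial loop obtained by deleting the constant pieces, and do the same for $\pi\circ\gamma$ throughout.

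By \cref{lem:lifting-elementary-2-simplex-homotopies}, this step satisfies
\[
\ell(\gamma_1)-\ell(\gamma)\leq 2C+2+3-1=2C+4,
\]
coming from $\mu_1$, $\mu_2$, $\nu_1$, $\nu_2$, and the $\lambda_i$ minus the removed edge. The homotopy uses at most $[G_e:G_\sigma]\leq C+1$ two-cells. The homotopy fixes the endpoints of $\gamma_0$; if the basepoint of $\gamma$ lies on $\gamma_0$, we first choose it as an endpoint, so the basepoint is fixed.

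Applying the inductive hypothesis to $\gamma_1$ and $(\Delta_1,f_1,\iota_1)$ then yields $\gamma'$ and a homotopy $h_1$. Concatenating the two homotopies and summing the bounds gives $\ell(\gamma')\leq\ell(\gamma)+(2C+4)\Area(f)$ and at most $(C+1)\Area(f)$ two-cells, which are statements (2) and (3); statement (1) is the conclusion of the induction at area $0$.

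The main obstacle is the second paragraph: identifying a single lifted horizontal edge of $\gamma$ over the free edge $\epsilon$ and verifying that the collapsed complex, with the modified boundary parametrization, is still a van~Kampen diagram for $\pi\circ\gamma_1$. For this we work with the combinatorial loops with constant pieces removed and use the planarity of $\Delta_1$ as a subcomplex of $\Delta$.
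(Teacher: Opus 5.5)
Your proposal is correct and is essentially the paper's argument. The paper fixes a full sequence of $\Area(f)$ elementary collapses of $\Delta$ in advance and lifts each one via \cref{lem:lifting-elementary-2-simplex-homotopies}, while you run the same lifting as an induction on $\Area(f)$, with the same per-step bounds $2C+4$ and $C+1$. One small caution: actually subdividing $\Delta$ would destroy the combinatoriality of $f$, so drop that hedge and rely on your own observation that the $2$-cells are already triangles with three distinct boundary edges, which is all the collapsing argument needs.
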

\begin{proof}
    Since $\Delta$ is a contractible planar simplicial $2$-complex, it is collapsible. Thus, by \cref{lem:collapse-in-order}, there is a sequence of $\Area(f)$ elementary collapses
    \[
        \Delta=\Delta_0 \collapse \Delta_1 \collapse \ldots \collapse \Delta_{\Area(f)}.
    \]
    Set $\Delta'\defq\Delta_{\Area(f)}$.
    This sequence transforms $\Delta$ into a tree $\Delta'$ and leaves the vertex set of the diagram invariant. By \cref{lem:lifting-elementary-2-simplex-homotopies}, it induces a sequence of basepoint-preserving homotopies $h_1,\ldots, h_{\Area(f)}$ between loops $\gamma=\gamma_0,\gamma_1,\ldots,\gamma_{\Area(f)}$.
    Set $\gamma'\defq\gamma_{\Area(f)}$. Then
    \begin{itemize}
        \item $\pi\circ\gamma'$ parametrizes the boundary of $\Delta'$ and thus admits a van~Kampen diagram that is a tree;
        \item $\ell(\gamma_{i+1})\leq \ell(\gamma_i) + 2C+4$;
        \item $h_i$ has at most $C+1$ many $2$-cells.
    \end{itemize}
    Choosing $h$ to be the concatenation of the homotopies $h_1,\ldots,h_{\Area(f)}$ thus completes the proof.
\end{proof}

We can now prove \cref{thm:Dehn-Brown-complexes,thm:Dehn-Brown-trees}.
\begin{proof}[Proof of \cref{thm:Dehn-Brown-complexes}]
    Up to $\approx$-equivalence we may assume that all functions
    $\Dehn_X$, $\Dehn_{G_v}$, and $\edgeDist_X$ are at least linear. It follows from \cref{prop:reduction to trees,thm:filling tree diagrams} that $\widehat{X}$ is simply connected and by construction it admits a free cocompact cellular $G$-action. Hence, $\Dehn_{\widehat{X}} \approx \Dehn_G$.

    Together, \cref{prop:reduction to trees,thm:filling tree diagrams} yield
    that
    \begin{align*}
        \Dehn_G(n)
         &\leq m_n \cdot \Bigl(\max_{v \in X^{(0)}} \Dehn_{G_v}\bigl(m_n + \overline{\edgeDist_X}(Dm_n)\bigr) + \edgeDist_X(Dm_n)\Bigr)
        + \Dehn_X(n) (C + 1),
    \end{align*}
    where $m_n = n + (2C + 4) \cdot \Dehn_X(n)$.
    As $m_n \preccurlyeq \Dehn_X(n)$, it follows that
    \begin{align*}
        m_n + \overline{\edgeDist_X}(Dm_n)
        \preccurlyeq \overline{\edgeDist_X}(Dm_n)
        \preccurlyeq \overline{\edgeDist_X}(\Dehn_X(n))
    \end{align*}
    and
    \begin{align*}
        \Dehn_{G_v}\bigl(m_n + \overline{\edgeDist_X}(Dm_n)\bigr) + \edgeDist_X(Dm_n)
         &\preccurlyeq
        \Dehn_{G_v}\bigl(\overline{\edgeDist_X}(\Dehn_X(n))\bigr),
    \end{align*}
    so the claimed upper bound follows.
\end{proof}

\begin{proof}[Proof of \cref{thm:Dehn-Brown-trees}]
    This is the special case of \cref{thm:Dehn-Brown-complexes} when $X$ is a tree.
\end{proof}

\begin{rem}
    \Cref{thm:Dehn-Brown-complexes} generalizes to actions on polyhedral $2$-complexes. Indeed, the construction in \cref{rem:polyhedral-model-1-skeleton-refined} works for arbitrary $n$-gons by attaching a $2n$-cell instead of a hexagon and similarly \cref{lem:planaer-complex-collapsible,lem:lifting-elementary-2-simplex-homotopies} generalize to arbitrary $n$-gons up to enlarging the constants. We restricted ourselves to the case of simplicial complexes to simplify the exposition.
\end{rem}

\section{Higher Dehn functions}
\label{sec:high-dimension}
In this section, we explain how our proof can be adapted to obtain upper bounds on higher Dehn functions of graphs of groups. As a consequence, we prove \cref{thm:high-dimensional-Brown-trees} and \cref{cor:high-dimensional-Brown-trees}.

\subsection{Lifting fillings of spheres and the second Dehn function}\label{sec:dimension-two}
We now provide a proof of the upper bound on the second Dehn function of a group acting cocompactly on a tree. The main result of this subsection is originally due to Barnard, Brady, and Dani (see~\cite{BarnardBradyDani-2012}*{Proposition 6.1}). We include a proof for the reader's convenience.

Throughout this section, let $X$ be a cocompact $G$-simplicial tree whose vertex stabilizers are of type $\F_3$ and whose edge stabilizers are of type $\F_2$.
Let $\widehat{X}$ and $\pi \colon \widehat{X} \to X$
be the complex and map obtained from \cref{prop:polyhedral-haefliger-model} for $n = 3$, that is, $\widehat{X}^{(3)}$ is a cocompact $G$-polyhedral complex and $\pi|_{\widehat{X}^{(3)}}$ is a polyhedral map.

\begin{defn}%
    \label{defn:tree-van-kampen-diagram}
    Let $f\colon S\cong S^2\to X$ be a polyhedral map from a finite
    polyhedral $2$-sphere.
    A \emph{tree van~Kampen diagram} for $f$ is a triple $(T,F,\iota)$
    consisting of a finite tree $T$ and polyhedral maps $\iota \colon S \to T$, $F \colon T \to X$
    such that
    \begin{condenum}
        \item $F \circ \iota = f$;
        \item $\iota^{-1}(v)$ is connected for every vertex $v$ of $T$;
        \item for every edge $e$ of $T$, there is a polyhedral homeomorphism
              $\iota^{-1}(e^\circ)\cong S^1\times e^\circ$ under which $\iota$
              corresponds to projection onto $e^\circ$.
    \end{condenum}
\end{defn}

\begin{rem}%
    \label{rem:polyhedral-from-singular-fillings}
    The definition of a singular combinatorial map $f\colon S^2\to \widehat{X}$ implies that singular $1$-cells are mapped onto $0$-cells and that singular $2$-cells are mapped onto the image of their attaching maps in the $1$-skeleton, which is a polyhedral subcomplex of $\widehat{X}^{(1)}$. A suitable simplicial subdivision of the singular $2$-cells of $S^2$ thus yields a polyhedral map $f'\colon S^2\to \widehat{X}$. By construction, $f$ and $f'$ have the same image. Consequently, any filling $F'\colon D^3\to \widehat{X}$ of $f'$ induces a filling $F\colon D^3\to \widehat{X}$ of $f$ with $\Vol(F)=\Vol(F')$. Thus, we can restrict to bounding the filling volume of polyhedral maps $S^2\to \widehat{X}$ to derive an upper bound on $\Dehn_G^2$.

    The same line of argument allows us to replace singular combinatorial fillings of combinatorial loops by polyhedral fillings.
\end{rem}

\begin{prop}%
    \label{prop:tree-van-kampen-diagram-existence}
    Let $X$ be a polyhedral complex and let $f\colon S\cong S^2\to X$ be a
    polyhedral map from a finite polyhedral $2$-sphere.
    If $f(S)$ is a tree, then
    $f$ admits a tree van~Kampen diagram.
\end{prop}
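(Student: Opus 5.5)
The plan is to build $T$ as the "Stein factorization" of $f$: the space of connected components of the fibers of $f$. Since $f$ is polyhedral and $f(S)$ is a tree, every cell of $S$ maps affinely onto either a vertex or an edge of $f(S)$. In particular, $2$-cells of $S$ are either collapsed to vertices or mapped onto edges with $1$-dimensional level sets.

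First I define the vertices and edges of $T$.
\begin{itemize}
    \item For every vertex $w$ of $f(S)$, the preimage $f^{-1}(w)$ is a subcomplex of $S$. I take one vertex of $T$ for each of its connected components.
    \item For every edge $e$ of $f(S)$, let $U_e\defq f^{-1}(e^\circ)$. This is an open subset of $S$ and a union of open cells, all mapping onto $e^\circ$. I take one edge of $T$ for each connected component of $U_e$.
\end{itemize}
The closure of a component of $U_e$ meets $f^{-1}$ of the two endpoints of $e$. I attach the corresponding edge of $T$ to the components of those preimages that it meets.

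The key step, which I expect to be the main obstacle, is condition (iii): each component $W$ of $U_e$ is polyhedrally $S^1\times e^\circ$ with $\iota$ the projection.
\begin{itemize}
    \item Since polyhedral maps send $0$-cells to $0$-cells, $U_e$ contains no vertices of $S$. Every point of $U_e$ therefore lies in an open $2$-cell or open $1$-cell mapping affinely and surjectively onto $e^\circ$.
    \item So $f|_{W}$ is locally a trivial linear projection. It is also proper onto $e^\circ$, because $S$ is compact and limit points of $W$ outside $W$ map to the endpoints of $e$.
    \item Hence the fibers are compact $1$-manifolds without boundary (since $S$ is closed), i.e.\ finite unions of circles. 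Concretely, each fiber is a cycle of segments, one segment in each $2$-cell of $W$.
    \item A PL version of Ehresmann's theorem makes this explicit. The combinatorics of the fiber (which cells it crosses) is constant over $e^\circ$, so one can straighten $W$ cell by cell to a product $C\times e^\circ$ with $C$ a fiber.
    \item Connectedness of $W$ then forces $C$ to be a single circle.
\end{itemize}
This also shows that each edge of $T$ has exactly one endpoint over each endpoint of $e$, so $T$ is a graph. Defining $\iota$ on $W$ as $f$ followed by the identification of $e^\circ$ with the open edge of $T$ makes $\iota$ affine on cells. The same holds for $F\colon T\to X$ sending each vertex and edge of $T$ to its image, so both maps are polyhedral and $F\circ\iota=f$. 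Condition (ii) holds by construction, since the preimage of a vertex of $T$ is one connected component of $f^{-1}(w)$.

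It remains to show that $T$ is a tree.
\begin{itemize}
    \item $T$ is connected because $S$ is connected and $\iota$ is a continuous surjection.
    \item Suppose $T$ contained an embedded cycle through an edge $\epsilon$. I would lift it to a closed loop in $S$: choose paths inside the connected vertex fibers and arcs crossing the edge bands $S^1\times\epsilon^\circ$ transversally.
    \item Arrange the loop to cross the circle $\iota^{-1}(\text{midpoint of }\epsilon)$ exactly once, transversally.
    \item This contradicts the Jordan curve theorem in $S\cong S^2$: every embedded circle separates $S^2$, so the mod-$2$ intersection number of any closed loop with it is $0$.
\end{itemize}
Alternatively, one can argue that removing the open band over any edge disconnects $S$, hence every edge of $T$ is separating. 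Either way $T$ is a finite tree and $(T,F,\iota)$ is a tree van~Kampen diagram.
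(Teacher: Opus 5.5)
Your proposal is correct and follows essentially the same route as the paper: $T$ is the space of connected components of fibers of $f$ (the paper defines it directly as the quotient $S/{\sim}$ rather than assembling it as a graph), the preimage of each open edge is a trivial bundle whose fibers are closed connected $1$-manifolds and hence circles, and cycles in $T$ are ruled out by Jordan separation of the circle $\iota^{-1}(t)$. The only difference is cosmetic and lies in the last step: the paper uses that $T\setminus\{t\}$ is connected and $\iota$ is monotone, so $S\setminus\iota^{-1}(t)$ is connected, whereas you explicitly lift a loop that crosses $\iota^{-1}(t)$ exactly once.
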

\begin{proof}
    Define an equivalence relation on $S$ by declaring
    $x \sim y$ if $f(x)=f(y)$ and $x$ and $y$ lie in the same connected component of
    $f^{-1}(f(x))$.
    Let $T \defq \quot{S}{\sim}$ and denote the quotient map by $\iota \colon S \to T$.
    Since $f$ is constant on equivalence classes, there is an induced map
    $F \colon T \to X$ such that $F \circ \iota=f$.

    Since $S$ is compact and $f$ is polyhedral, $T$ is a finite graph.
    Hence, with the induced polyhedral structure on $T$, the maps $\iota$ and $F$ are polyhedral.

    Let $e$ be an edge of $T$.
    The restriction $\iota^{-1}(e^\circ) \to e^\circ$ is a PL bundle whose fibers are compact $1$-manifolds.
    Its fibers have empty boundary because $S$ has no boundary, and they are connected by construction.
    Hence, every fiber is a circle.
    Since $e^\circ$ is an interval, the bundle is trivial, so $\iota^{-1}(e^\circ) \cong S^1 \times e^\circ$, with $\iota$ corresponding to projection onto $e^\circ$.

    It remains to show that $T$ is a tree.
    It is connected because $S$ is connected.
    Suppose that $T$ contains a cycle and choose a point $t$ in the interior of an edge on this cycle.
    Then $\iota^{-1}(t)$ is an embedded circle in $S \cong S^2$, so it separates $S$ by the Jordan--Sch\"onflies theorem.
    On the other hand, $T \setminus \{t\}$ is connected.
    Since $\iota$ is monotone, the preimage of this connected subspace is connected.
    Thus, $S \setminus \iota^{-1}(t)$ is connected, a contradiction.
    It follows that $T$ has no cycles and hence is a tree.
\end{proof}

\begin{lem}%
    \label{lem:extremal-vertex-fibers}
    Let $(T,F,\iota)$ be a tree van~Kampen diagram for a polyhedral map
    $f \colon S \cong S^2 \to X$.
    Let $e=[u,v]$ be an edge of $T$ such that $v$ is a leaf of $T$.
    Then $\overline{\iota^{-1}((u,v])}$ is a polyhedral $2$-disk.
\end{lem}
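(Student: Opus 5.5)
The plan is to first identify the set $A\defq\iota^{-1}((u,v])=\iota^{-1}(v)\cup\iota^{-1}(e^\circ)$ up to homeomorphism, and then to control what the closure adds over $u$. By condition (iii) of \cref{defn:tree-van-kampen-diagram}, $\iota^{-1}(e^\circ)\cong S^1\times e^\circ$ with $\iota$ the projection. Fix a point $t_0\in e^\circ$. Then $C\defq\iota^{-1}(t_0)$ is an embedded polyhedral circle in $S\cong S^2$.

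\textbf{Step 1: the disk on the leaf side.} By Jordan--Sch\"onflies, $C$ cuts $S$ into two closed disks. The sets $\iota^{-1}(T\setminus[t_0,v])$ and $\iota^{-1}((t_0,v])$ are connected, since $\iota$ is monotone and surjective; this uses that $v$ is a leaf, so $(t_0,v]$ and its complement in $T\setminus\{t_0\}$ are both connected. They are also disjoint, and their union is $S\setminus C$. Hence they are exactly the two complementary components of $C$. So $D_v\defq\iota^{-1}([t_0,v])$ is a polyhedral $2$-disk with boundary $C$. This is the same argument as in the proof of \cref{prop:tree-van-kampen-diagram-existence}.

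\textbf{Step 2: the collar towards $u$.} The remaining part is $A\setminus D_v^\circ$, whose closure is $\overline{\iota^{-1}((u,t_0])}$. I would use polyhedrality of $\iota$ directly. The open cells of $S$ lying in $\iota^{-1}(e^\circ)$ are finitely many open $2$-cells $P_1,\dots,P_m$ and open edges, each mapping affinely onto $e^\circ$. Each closed $2$-cell $\overline{P_j}$ is a convex polygon mapped affinely onto $e$. Hence $\overline{P_j}\cap\iota^{-1}(u)$ is a single vertex or a single edge $b_j$ of $\overline{P_j}$ (its "bottom face"). The closure of $\iota^{-1}((u,t_0])$ is therefore $\iota^{-1}((u,t_0])\cup\gamma$, where $\gamma=\bigcup_j b_j$. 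Using the fibrewise product structure and affineness on each polygon, the straight-line retraction inside each $\overline{P_j}$ towards its bottom face exhibits this closure as the mapping cylinder of a map $S^1\to\gamma$. Concretely, one can build it as a quotient of $S^1\times[0,1]$ that is injective on $S^1\times(0,1]$. Gluing this cylinder to $D_v$ along $C$ yields $\overline{A}$.

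\textbf{Step 3 and main obstacle.} The closure $\overline{A}$ is a disk exactly when the bottom map $S^1\to\gamma$ is injective, i.e.\ when $\gamma$ is an embedded circle in $\iota^{-1}(u)$. Away from vertices this is clear. Two distinct bottom faces are distinct edges of the polyhedral sphere $S$, because cells of $S$ meet in common faces. Moreover, an edge of $S$ is a face of exactly two $2$-cells, only one of which lies over $e^\circ$ (the other lies over $u$). So no edge of $\gamma$ is traversed twice. The delicate point is a vertex $w\in\gamma$: one must exclude that two non-adjacent arcs of the collar both touch $w$, which would pinch the boundary. My first attempt would be to examine the link of $w$ in $S$, which is a circle. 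The $2$-cells at $w$ lying over $e^\circ$ form arcs of this link, and between these arcs lie cells over $u$. Using connectedness of the fibres $\iota^{-1}(t)$ for $t$ close to $u$, together with the local product structure near $w$ coming from affineness, I would show that there is only one such arc; that is the embeddedness of $\gamma$. If a pinching cannot be excluded in this way, the fallback is to note that such a configuration can be removed by a polyhedral subdivision of $S$ near $\iota^{-1}(u)$ that does not change $f$. Having shown that $\gamma$ is embedded, $\overline{A}$ is $D_v$ with a collar $S^1\times[0,1]$ attached along $C$. It is therefore a polyhedral $2$-disk with boundary $\gamma$.
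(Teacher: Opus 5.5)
Your Steps 1 and 2 are the paper's proof: Jordan--Sch\"onflies plus monotonicity of $\iota$ for the leaf side, then the product collar over $(u,t_0]$. Your mapping-cylinder description of the collar is correct.

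\textbf{The gap is Step 3, and it cannot be closed.} Embeddedness of $\gamma$, and with it the lemma as literally stated, fails in general. Here is a counterexample.
\begin{itemize}
\item Let $Q\defq[0,1]\times e$ and let $S$ be the double of $Q$, i.e.\ two copies of $Q$ glued along $\partial Q$. Let $f$ be the projection onto $e\subseteq X$ on both copies. This is what $\pi$ does to a sphere in $\widehat{X}$ folded onto a single product cell $P\times e$.
\item Then $T=e$, $F$ the inclusion, and $\iota=f$ form a tree van~Kampen diagram; it is the one produced by \cref{prop:tree-van-kampen-diagram-existence}. The fibres $\iota^{-1}(u)=[0,1]\times\{u\}$ and $\iota^{-1}(v)=[0,1]\times\{v\}$ are arcs. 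Each fibre over $e^\circ$ is the circle formed by the two copies of $[0,1]\times\{t\}$.
\item Both vertices are leaves. But $\iota^{-1}((u,v])=S\setminus\iota^{-1}(u)$ is dense, so its closure is all of $S$, not a disk.
\end{itemize}
The bottom edge here is a face of two $2$-cells that both lie over $e^\circ$. That is exactly what your claim that ``the other lies over $u$'' rules out.

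The vertex pinching you worry about also genuinely occurs. Take $\iota^{-1}(u)$ to be two polygonal disks meeting in one vertex $w$. The collar then pinches at $w$, even though every fibre $\iota^{-1}(t)$ is a connected circle (it simply passes $w$ twice). So connectedness of the fibres cannot exclude pinching. Your fallback fails as well: subdividing $S$ without changing $f$ leaves $\iota$ unchanged, hence leaves the subset $\overline{\iota^{-1}((u,v])}\subseteq S$ unchanged.

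\textbf{Comparison with the paper.} The paper closes this step in one sentence, appealing to injectivity of the attaching maps, i.e.\ \cref{prop:polyhedral-haefliger-model:injective}. That is a property of $\widehat{X}$, not of the domain $S$, and it holds in the folded example above. So the paper's argument has the same gap.

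\textbf{What is true.} What holds, and is all that \cref{lem:tree collapse single cell-2d} uses, is what your Step 2 already constructs. Namely, the abstract disk $\widetilde{D}\defq D_v\cup_C(S^1\times[0,1])$, together with its polyhedral mapping-cylinder map $j\colon\widetilde{D}\to S$. This map is injective off $\partial\widetilde{D}$ and sends $\partial\widetilde{D}$ onto $\gamma\subseteq\iota^{-1}(u)$. State the lemma in this form and apply \cref{lem:tree collapse single cell-2d} to $f\circ j$. The subsequent regluing must then be done along $j|_{\partial\widetilde{D}}$ rather than along an embedded circle.

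A minor point: your criterion ``disk iff the bottom map is injective'' is also not right. A cell whose bottom face is a vertex collapses an arc of $S^1$ to a point without harming the disk property. The correct condition is that the bottom map is monotone onto an embedded circle.
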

\begin{proof}
    Choose $t \in (u,v)$ and set $C \defq \iota^{-1}(t)$.
    By \cref{defn:tree-van-kampen-diagram}, $C$ is an embedded circle in $S$.
    Thus, by the Jordan--Schönflies theorem, $S \setminus C$ has two components, both open $2$-discs.

    Since $v$ is a leaf, $(t,v]$ is a component of $T \setminus \{t\}$.
    The map $\iota$ is monotone, so $\iota^{-1}((t,v])$ is connected. It follows that
    $\iota^{-1}((t,v])$ is one of the two components of $S \setminus C$, and hence is
    an open $2$-disc. Its closure is therefore a closed $2$-disc with boundary $C$.

    The product structure over the open edge identifies $\iota^{-1}((u,t])$ with an
    annular collar of $C$. Hence, $\overline{\iota^{-1}((u,v])}$ is obtained from the
    closed disc $\overline{\iota^{-1}((t,v])}$ by extending this collar. Our injectivity assumption on the attaching maps implies that this is again
    a closed $2$-disc. Since $\iota$ is polyhedral, this disc is polyhedral.
\end{proof}

\begin{lem}%
    \label{lem:tree collapse single cell-2d}
    Let $f \colon D \cong D^2 \to \widehat{X}$ be a polyhedral map.
    Let $e = \{u,v\}$ be an edge in $X$ and assume that $(\pi \circ f)^{-1}(u) = \partial D$ and $\pi(f(D)) = e$.
    Then $f$ is homotopic to a polyhedral map $f' \colon D' \cong D^2 \to \widehat{X}_u$ via a homotopy $h$ fixing $\partial D$ such that
    \begin{enumerate}
        \item $\Volume(f') \leq \Dehn_{G_e}(\Volume(f|_{\partial D}))$;
        \item $h$ has at most $\Dehn_{G_v}^2\bigl(\Volume(f) + \Volume(f')\bigr) + \Volume(f')$ nonsingular $3$-cells.
    \end{enumerate}
\end{lem}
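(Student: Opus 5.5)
This is the two-dimensional analogue of \cref{lem:tree collapse single cell}, and I would copy its structure: push the part of the disk lying over $v$ across the edge collar and fill inside $EG_v$.

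\textbf{Step 1: decompose $D$.} By hypothesis $(\pi\circ f)^{-1}(u)=\partial D$ and $\pi(f(D))=e$. Since $\pi|_{\widehat{X}^{(3)}}$ is polyhedral and the cells over $e^\circ$ are product cells $P\times e$ (\cref{prop:polyhedral-haefliger-model}\,\localref{prop:polyhedral-haefliger-model:product-cells}), I would argue as in \cref{prop:tree-van-kampen-diagram-existence} and \cref{lem:extremal-vertex-fibers}. This should show that $D$ splits into three pieces:
\begin{itemize}
\item a collar $A\cong S^1\times[0,1]$ mapped into $EG_e\times e$, which is a union of product $2$-cells over loops in $EG_e$;
\item boundary circles of this collar, $\partial D$ over $u$ and an inner circle $C$ over $v$;
\item an inner disk $D_v$ mapped into $\widehat{X}_v$.
\end{itemize}
If several components of $\pi f$-preimages occur, I would treat them separately; the collar argument is the same. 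The boundary $f|_{\partial D}$ is then a loop in the image of $\transGlue{u}{e}(EG_e)$. By injectivity (\localref{prop:polyhedral-haefliger-model:injective}) it is the image of a loop $c$ in $EG_e$ with $\ell(c)=\Volume(f|_{\partial D})$. Likewise $f|_C$ is the parallel copy $c\times\{1\}$, pushed into $EG_v$ via $\transGlue{v}{e}$.

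\textbf{Step 2: fill in $EG_e$ and transport.} Choose a minimal filling disk $f'_0\colon D'\to EG_e$ of $c$. Its area is at most $\Dehn_{G_e}(\ell(c))=\Dehn_{G_e}(\Volume(f|_{\partial D}))$. Let $f'$ be its image in $\widehat{X}_u$ via $\transGlue{u}{e}$; this gives property~(1). Let $f'_1$ be the parallel copy in $EG_e\times\{1\}$, mapped to $\widehat{X}_v$. The product $f'_0\times e$ gives a solid cylinder in $EG_e\times e$. It has one nonsingular $3$-cell for each $2$-cell of $D'$, so at most $\Volume(f')$ cells, and it provides a homotopy rel $\partial D$ from $f'$ to the disk $A\cup f'_1$.

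\textbf{Step 3: fill the sphere in $EG_v$.} The disks $f|_{D_v}$ and $f'_1$ share the boundary $f|_C$ and lie in $\widehat{X}_v\cong EG_v$, which is $2$-connected. Gluing them gives a sphere of volume at most $\Volume(f)+\Volume(f')$. Filling it costs at most $\Dehn^2_{G_v}(\Volume(f)+\Volume(f'))$ $3$-cells and yields a homotopy rel $C$ from $f|_{D_v}$ to $f'_1$. Extend this by the identity on $A$ and compose with the cylinder from Step~2 to obtain $h$ with the claimed count in (2). If needed, \cref{rem:polyhedral-from-singular-fillings} makes everything polyhedral.

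\textbf{Main obstacle.} The main difficulty is Step~1. It must show that $f$ restricted to the collar really is a product of one loop $c$ in $EG_e$ with the interval, and that the top circle $C$ carries exactly $\transGlue{v}{e}(c)$. Polyhedrality and the product-cell structure give local product structure over $e^\circ$. Turning this into a global identification requires injectivity of the $\transGlue{}{}$ maps to rule out folding at the vertex fibers, exactly as in \cref{lem:extremal-vertex-fibers}. I would first try to invoke that lemma's argument directly, taking a point $t\in e^\circ$, its fiber circle, and the monotonicity of the induced quotient map.
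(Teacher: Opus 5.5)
Your Steps~2 and~3 match the paper's proof. You fill the boundary loop in $EG_e$ to get $f'=f'_0$, take the parallel copy $f'_1$ at the $v$-end, fill a sphere of volume at most $\Volume(f)+\Volume(f')$ inside $\widehat{X}_v$, and pay $\Volume(f')$ for the solid cylinder $f'_0\times e$.

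The difference is how the part of $D$ over $e$ is handled. The paper never decomposes $D$. It composes all of $f$ with the collapse $r$ of $\widehat{X}_e\times e$ onto its $v$-end, followed by $\transGlue{v}{e}$. Then $r\circ f\colon D\to\widehat{X}_v$ has the same boundary as $f'_1$. Also $\Volume(r\circ f)\leq\Volume(f)$, because every product square $P\times e$ becomes singular. One glues $r\circ f$ to $f'_1$ and fills in $\widehat{X}_v$. Deforming $f$ to $r\circ f$ moves cells only inside $2$-dimensional product cells. So the only additional nonsingular $3$-cells are those of the collar $f'_0\times e$.

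This matters because your Step~1, which you rightly flag, is not true as stated. One part is fine: $(\pi\circ f)^{-1}(e^\circ)$ is always a single open annulus. Every $2$-cell over $e$ meets $\partial D$ in its $u$-face, so the fiber near $u$ is one circle parallel to $\partial D$. Your worry about several components is therefore moot.

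However, the $v$-end of this annulus can be pinched. Let $c$ be a figure-eight loop in $EG_e$ passing twice through a vertex $g_0$. Then $D$ can consist of the product collar $c\times e$ with its two top vertices over $g_0$ identified in $D$, plus two disks filling the two lobes inside $\widehat{X}_v$. One checks that this is a disk. Here $(\pi\circ f)^{-1}(v)$ is two disks meeting in a vertex, so there is no embedded circle $C$ and no disk $D_v$. Degenerately, $(\pi\circ f)^{-1}(v)$ can even be an arc, for a backtracking $c$. Injectivity of $\transGlue{v}{e}$ is a condition on the target, so it cannot rule out such identifications in the domain. Your proposed fix, appealing to \cref{lem:extremal-vertex-fibers}, therefore does not give the decomposition you want.

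The repair within your framework is to cut at an interior level $t\in e^\circ$, where the fiber is an embedded circle. Then push the disk $(\pi\circ f)^{-1}([t,v])$ into $\widehat{X}_v$ using $r$. That is exactly the paper's trick, and once you use it, the structural analysis in Step~1 becomes unnecessary.
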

\begin{proof}
    It follows from the assumptions that the boundary loop $f|_{\partial D}$ is contained in $\widehat{X}_e \times \{u\}$.
    Let $f'_0 \colon D' \to \widehat{X}_e \times \{0\}$ be a polyhedral filling of this loop with
    $\Volume(f'_0) \leq \Dehn_{G_e}(\Volume(f|_{\partial D}))$.
    Denote by $f'_1 \colon D' \to \widehat{X}_e \times \{1\}$
    the parallel copy of $f'_0$ at the other end of the cylinder $\widehat{X}_e \times [0,1]$.

    Collapse $\widehat{X}_e \times [0,1]$ to the end attached to the vertex fiber over
    $v$. Composing $f$ with this collapse gives a polyhedral map $f_v \colon D \to \widehat{X}_v$
    whose boundary agrees with the boundary of $f'_1$. Hence, $f_v \cup f'_1$ is a
    polyhedral map from a $2$-sphere to $\widehat{X}_v$ of volume at most $\Volume(f)+\Volume(f_1')$. We fill this
    sphere inside $\widehat{X}_v$ using at most
    \[
        \Dehn_{G_v}^2\bigl(\Volume(f)+\Volume(f_1')\bigr)
    \]
    nonsingular $3$-cells.

    Finally, parallel translation of $f'_1$ back to $f'_0$ inside
    $\widehat{X}_e \times [0,1]$ gives a collar with $\Volume(f_1')$ many nonsingular $3$-cells.
    Gluing this collar to the filling inside $\widehat{X}_v$ gives a homotopy from
    $f$ to $f'\defq f'_0$ fixing $\partial D$, with the claimed bound.
\end{proof}

\begin{thm}%
    \label{thm:filling-tree-diagrams-2d}
    Let $f \colon S \cong S^2 \to \widehat{X}$ be a polyhedral map with $\Volume(f) = n$.
    Assume that $\pi \circ f$ admits a tree van~Kampen diagram $(T, F, \iota)$.
    Then there exists a filling of $f$ with
    \[
        \Vol(f)\leq \overline{\max_e\Dehn_{G_e}}(n)
        +\overline{\max_v\Dehn_{G_v}^2}
        \bigl(n+2\cdot \overline{\max_e\Dehn_{G_e}}(n)\bigr).
    \]
\end{thm}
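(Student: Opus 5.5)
The plan is to mirror the proof of \cref{thm:filling tree diagrams} one dimension up. I would collapse the tree $T$ leaf by leaf and, at each leaf edge, use \cref{lem:tree collapse single cell-2d} to push the part of the sphere lying over that edge into the vertex fiber at the inner endpoint.

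Fix an order of collapses $T=T_0\collapse T_1\collapse\cdots\collapse T_m=\{v_0\}$, each an elementary collapse of a leaf edge $e_j=[u_j,v_j]$ with $v_j$ a leaf of $T_{j-1}$. Inductively I will have a sphere $f_j\colon S_j\to\widehat X$ that is homotopic to $f$ and whose projection $\pi\circ f_j$ admits the tree van~Kampen diagram $T_j$. At step $j$, \cref{lem:extremal-vertex-fibers} says $D_j\defq\overline{\iota^{-1}((u_j,v_j])}$ is a polyhedral disc. Shrinking it slightly, I may assume its boundary circle lies in $\iota^{-1}(u_j)$ (using the product structure over the edge). Then $\pi\circ f_j|_{D_j}$ satisfies the hypotheses of \cref{lem:tree collapse single cell-2d}, with $u=u_j$ and $v=v_j$. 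The lemma replaces $D_j$ by a disc $D'_j$ in the fiber $\widehat X_{u_j}$, at cost of at most $\Dehn_{G_{v_j}}^2(\Vol(f_j|_{D_j})+\Vol(f'_j))+\Vol(f'_j)$ nonsingular $3$-cells. Here $\Vol(f'_j)\le\Dehn_{G_{e_j}}(\ell_j)$, where $\ell_j$ is the length of the boundary circle $f|_{\partial D_j}$. Since $D'_j$ maps to $u_j$, the new sphere's projection has tree diagram $T_j$. At the end, $f_m$ lies in $\widehat X_{v_0}$ and I fill it there with $\Dehn_{G_{v_0}}^2(\Vol(f_m))$ cells.

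For the accounting, the key observation is that the boundary circles $\partial D_j$ are disjoint circles $\iota^{-1}(t_j)$ with $t_j$ in distinct edges. Each such circle lies in the transverse product region $S^1\times e^\circ$, and its length is bounded by the number of $2$-cells of $f$ over $e_j$, since the cells over $e$ are products $P\times e$ by \cref{prop:polyhedral-haefliger-model:product-cells}. Hence $\sum_j\ell_j\le n$, and therefore $\sum_j\Vol(f'_j)\le\overline{\max_e\Dehn_{G_e}}(n)$. This accounts for the collar term. For the vertex fillings, I would group the work by vertex $v$ of $T$, as in the $1$-dimensional proof. Say the edge at $v$ through which $v$ becomes a leaf is collapsed at step $j$. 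The disc $D_j$ then consists of the original cells of $f$ over $v$, together with the replacement discs $D'_i$ for the already collapsed edges adjacent to $v$, plus the new disc $D'_j$. Its volume is at most $n_v+2\sum_{i}\Vol(f'_i)$, where $n_v$ is the number of cells over the closed star portion. Summing over all vertices, using superadditivity of $\overline{\max_v\Dehn^2_{G_v}}$ and the fact that each $f'_i$ is counted at most twice (once at each endpoint), gives a total of at most
\[
\overline{\max_v\Dehn_{G_v}^2}\bigl(n+2\cdot\overline{\max_e\Dehn_{G_e}}(n)\bigr),
\]
and the final filling at $v_0$ is included in this sum.

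I expect the main obstacle to be the bookkeeping that justifies the $n+2\overline{\max_e\Dehn_{G_e}}(n)$ bound. One must check that the cells of $f$ are partitioned among the vertex fibers and edge collars without double counting, and that the collar cells from \cref{lem:tree collapse single cell-2d} are not reused inside later vertex fillings beyond the factor $2$. A second technical point is ensuring that the boundary circle of $D_j$ can be placed exactly in $\iota^{-1}(u_j)$, so that \cref{lem:tree collapse single cell-2d} applies. For this I would use the product structure $\iota^{-1}(e^\circ)\cong S^1\times e^\circ$ together with the injectivity from \cref{prop:polyhedral-haefliger-model:injective}. Through it, the circle at the $u_j$-end is a polyhedral loop in $\widehat X_{e_j}\times\{u_j\}$ of length $\ell_j$.
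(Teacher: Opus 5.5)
Your proposal is correct and follows essentially the same route as the paper. Both arguments collapse the tree leaf by leaf using \cref{lem:extremal-vertex-fibers} and \cref{lem:tree collapse single cell-2d}. Both bound the collar cost by superadditivity of $\overline{\max_e\Dehn_{G_e}}$, using that the edge collars of $S$ are disjoint, and both group the vertex fillings by vertex so that each replacement disc is counted at most twice.

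Two small inaccuracies. First, per vertex the sphere being filled has volume at most $n_v+\sum_i\Vol(f'_i)$; the factor $2$ appears only after summing over all vertices, which is what your final bound uses. Second, no "shrinking" is needed, since the closed disc $\overline{\iota^{-1}((u_j,v_j])}$ already has its boundary over $u_j$.
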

\begin{proof}
    The proof works analogously to the proof of \cref{thm:filling tree diagrams}.
    We iteratively apply \cref{lem:tree collapse single cell-2d} to the map
    $f|_D \colon D \to \widehat{X}$, where $D = \overline{\iota^{-1}((u,v])}$ is the disk
    from \cref{lem:extremal-vertex-fibers}. Doing so gives a homotopy to a polyhedral map
    $f' \colon S' \to \widehat{X}$ such that $\pi \circ f'$ admits a tree van~Kampen diagram with
    one fewer edge than $T$.

    Using the notation from \cref{lem:tree collapse single cell-2d}, let $e_1,\ldots,e_{r_v}$ be the
    edges incident to $v$ in the order in which they are collapsed (see again \cref{fig:subtrees around vertex}).
    Denote by $f_{v,i}\colon D_{v,i}\to \widehat{X}$ the polyhedral map to which we apply \cref{lem:tree collapse single cell-2d} when collapsing the edge $e_i$, and by $f_{v,i}'\colon D_{v,i}'\to \widehat{X}$ the resulting polyhedral map.
    When collapsing the last edge $e_{r_v}$, we apply \cref{lem:tree collapse single cell-2d} to a map $f_{v,r_v}\colon D_{v,r_v}\to \widehat{X}$ defined on the $2$-disc $D_{v,r_v}=S_v\cup D_{v,1}'\cup \ldots \cup D_{v,r_v-1}'$. The resulting homotopy therefore has at most
    \begin{align*}
        \Dehn_{G_v}^2\Bigl(\Volume(f|_{S_v}) + \sum_{i=1}^{r_v} \Volume(f_{v,i}')\Bigr)
         &\leq \Dehn_{G_v}^2\Bigl(\Volume(f|_{S_v}) + \sum_{i=1}^{r_v} \max_e\Dehn_{G_e}\bigl(\Volume(f|_{\partial D_{v,i}})\bigr)\Bigr) \\
         &\leq \Dehn_{G_v}^2\Bigl(\Volume(f|_{S_v}) + \overline{\max_e\Dehn_{G_e}}\Bigl(\sum_{i=1}^{r_v} \Volume(f|_{\partial D_{v,i}})\Bigr)\Bigr)
    \end{align*}
    nonsingular $3$-cells.
    The resulting filling therefore has at most
    \begin{equation}
        \label{eq:upper-bound-3cell-vertex}
        \overline{\max_v\Dehn_{G_v}^2}\Bigl(\sum_v \Volume(f|_{S_v}) + 2\cdot \overline{\max_e\Dehn_{G_e}}\Bigl(\sum_v\sum_{i=1}^{r_v} \Volume(f|_{\partial D_{v,i}})\Bigr) \Bigr)
    \end{equation}
    nonsingular $3$-cells coming from fillings inside the $EG_v$. Observe that $\Vol(f|_{\partial D_{v,i}})$ coincides with the number of nonsingular $2$-cells in the collar above $e_i$ that extends $\partial D_{v,i}$ into the original sphere $S$. In particular, the $2$-cells of the $S_v$ and the $2$-cells of the collars defined by the $\partial D_{v,i}$ are pairwise disjoint subcomplexes of $S$. Thus,
    $\sum_v \Volume(f|_{S_v}) \leq n$ and $\sum_v \sum_{i=1}^{r_v} \Volume(f|_{\partial D_{v,i}}) \leq n$,
    and hence \cref{eq:upper-bound-3cell-vertex} is bounded above by
    $\overline{\max_v\Dehn_{G_v}^2}\bigl(n + 2\cdot \overline{\max_e\Dehn_{G_e}}(n)\bigr)$.
    Similarly, one sees that the fillings inside the $EG_e$ contribute at most
    $\overline{\max_e\Dehn_{G_e}}(n)$ nonsingular $3$-cells.
\end{proof}
We can now deduce~\cite{BarnardBradyDani-2012}*{Proposition 6.1} from \cref{thm:filling-tree-diagrams-2d} and \cref{rem:polyhedral-from-singular-fillings}:

\begin{cor}[{\cite{BarnardBradyDani-2012}*{Proposition 6.1}}]
    If $X$ is a tree, then
    \[\Dehn_G^2(n)\preccurlyeq
        \overline{\max_v\Dehn_{G_v}^2}
        \bigl(\overline{\max_e\Dehn_{G_e}}(n)\bigr).\]
\end{cor}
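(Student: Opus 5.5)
The plan is to deduce the corollary from \cref{thm:filling-tree-diagrams-2d} by showing that every sphere in $\widehat{X}$ projects to a sphere in $X$ admitting a tree van~Kampen diagram.

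First, $\widehat{X}$ is a valid model for computing $\Dehn_G^2$. By \cref{prop:polyhedral-haefliger-model} with $n=3$, $\widehat{X}^{(3)}$ is a cocompact $G$-polyhedral complex with free $G$-action. By \cref{thm:haefliger-construction}, $\pi$ restricts to a homotopy equivalence $\widehat{X}^{[3]}\to X^{(3)}=X$, and $X$ is contractible. Hence $\widehat{X}^{(3)}$ is $2$-connected: the filtration piece $\widehat{X}^{[3]}$ contains the $3$-skeleton, and higher cells do not affect $\pi_{\le 2}$. Therefore $\Dehn_G^2\approx\Dehn^2_{\widehat{X}^{(3)}}$.

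Next, let $f\colon S^2\to\widehat{X}$ be admissible with $\Volume(f)\le n$. By \cref{rem:polyhedral-from-singular-fillings}, we may replace $f$ by a polyhedral map of the same image and volume; filling it yields a filling of $f$ of the same volume. Since $\pi|_{\widehat{X}^{(3)}}$ is polyhedral by \cref{prop:polyhedral-haefliger-model:polyhedral}, the composite $\pi\circ f\colon S\to X$ is a polyhedral map into the tree $X$, and its image is a subtree. \Cref{prop:tree-van-kampen-diagram-existence} then provides a tree van~Kampen diagram $(T,F,\iota)$ for $\pi\circ f$.

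Applying \cref{thm:filling-tree-diagrams-2d}, we obtain a filling of volume at most
\[
\overline{\max_e\Dehn_{G_e}}(n)+\overline{\max_v\Dehn_{G_v}^2}\bigl(n+2\,\overline{\max_e\Dehn_{G_e}}(n)\bigr).
\]
To reach the stated form, we may assume up to $\approx$-equivalence that $\max_e\Dehn_{G_e}(n)\ge n$ and that $\max_v\Dehn^2_{G_v}$ is at least linear. The argument of $\overline{\max_v\Dehn^2_{G_v}}$ is then at most $3\,\overline{\max_e\Dehn_{G_e}}(n)$. Superadditivity gives $\overline{h}(3m)\ge 3\overline{h}(m)$, and the reverse inequality holds up to the constants absorbed by $\preccurlyeq$, so we get $\overline{\max_v\Dehn^2_{G_v}}(3m)\preccurlyeq\overline{\max_v\Dehn^2_{G_v}}(m)$ with $m=\overline{\max_e\Dehn_{G_e}}(n)$. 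The first summand is also dominated, because $\overline{\max_v\Dehn^2_{G_v}}(m)\ge m$ once this function is at least linear.

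The step needing the most care is the reduction to polyhedral maps together with the applicability of \cref{prop:tree-van-kampen-diagram-existence}. Specifically, one must check that the fibres of $\iota$ over open edges are circles, which uses \cref{prop:polyhedral-haefliger-model:product-cells}. One must also check that the disc and injectivity hypotheses used inside \cref{thm:filling-tree-diagrams-2d} hold for the subdivided map. Both are already addressed in the earlier results, so the corollary should follow directly.
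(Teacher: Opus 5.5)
Your proposal is essentially the paper's deduction: reduce to polyhedral spheres via \cref{rem:polyhedral-from-singular-fillings}, obtain a tree van~Kampen diagram from \cref{prop:tree-van-kampen-diagram-existence} because $\pi\circ f$ lands in the tree $X$, apply \cref{thm:filling-tree-diagrams-2d}, and simplify the resulting bound. One justification in that last simplification needs replacing, since $\overline{h}(3m)\le C\,\overline{h}(m)$ fails in general (take $h=\max_v\Dehn_{G_v}^2$ exponential); instead, absorb the factor $3$ into the inner function using its superadditivity, $n+2\overline{g}(n)\le 3\overline{g}(n)\le\overline{g}(3n)$ for $g=\max_e\Dehn_{G_e}\ge n$, and combine this with monotonicity of $\overline{h}$ and with $\overline{g}(n)\le\overline{h}(\overline{g}(n))$.
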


\subsection{Homological filling functions}
We now turn to upper bounds for the higher homological Dehn functions and the proof of \cref{thm:high-dimensional-Brown-trees}.

As before, let $X$ be a cocompact $G$-simplicial tree
and $\widehat{X}$ and $\pi \colon \widehat{X} \to X$
be the complex and map obtained from \cref{prop:polyhedral-haefliger-model} for $n = k+1$. The proof follows a similar strategy to those in \cref{sec:dimension-one,sec:dimension-two}.

\begin{defn}%
    \label{defn:tree-decomposition}
    If $\alpha =\sum_\sigma \alpha_\sigma\sigma$ is a cellular chain and $A$ is a subspace, write $\alpha|_A \defq \sum_{\sigma \subseteq A}\alpha_\sigma\sigma$
    for the restriction of $\alpha$ to $A$.

    Let $k\geq 1$, let $T\subseteq X$ be a finite subtree, and let
    $z\in Z_k(\widehat{X})$ be supported in $\pi^{-1}(T)$.
    For each cell $\sigma$ of $T$, set
    $c(\sigma)\defq z|_{\pi^{-1}(\sigma)}-z|_{\pi^{-1}(\partial\sigma)}$.
    Thus, $c(\sigma)$ consists precisely of the terms supported on cells that
    project onto $\sigma$.
    We call the family $(c(\sigma))_{\sigma\subseteq T}$ the \emph{$T$-decomposition} of $z$.
    The $T$-decomposition of $z$ is unique and satisfies $z=\sum_{\sigma\subseteq T}c(\sigma)$.

    For an edge $e$ of $T$ incident to a vertex $v$, set
    $b(e,v)\defq \partial c(e)|_{\widehat{X}_e\times\{v\}}$.
\end{defn}

\begin{lem}\AvoidPageBreak%
    \label{lem:tree-decomposition-properties}
    Let $(c(\sigma))_{\sigma\subseteq T}$ be a $T$-decomposition of a cycle $z\in Z_k(\widehat{X})$ supported on a tree $T\subseteq X$. Then
    \begin{refenum}
        \item\label{tree-decomposition-properties:edge-boundary}
              $\partial c(e)=b(e,u)+b(e,v)$ for every edge $e=\{u,v\}$;
        \item\label{tree-decomposition-properties:endpoint-cycles}
              $\partial b(e,v) = 0$ for every edge $e$ with vertex $v$;
        \item\label{tree-decomposition-properties:vertex-boundary}
              $\partial c(v)+\sum_{e\ni v}b(e,v)=0$ for every vertex $v$;
        \item\label{tree-decomposition-properties:mass}
              $\sum_{\sigma\subseteq T}\Mass{c(\sigma)}=\Mass{z}$;
        \item\label{tree-decomposition-properties:endpoint-mass}
              $\Mass{b(e,v)}\leq\Mass{c(e)}$ for every edge $e$ with vertex $v$.
    \end{refenum}
\end{lem}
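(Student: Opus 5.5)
The plan is to work directly with the cell structure of $\widehat{X}$ near a vertex or edge fiber. By \cref{prop:polyhedral-haefliger-model}, $\pi$ is polyhedral on $\widehat{X}^{(k+1)}$. Every cell projecting onto an edge $e=\{u,v\}$ is a product cell $P\times e$ with $P$ a cell of $EG_e$. Its boundary is
\[
\partial(P\times e)=\partial P\times e \pm P\times\{v\}\mp P\times\{u\},
\]
where $P\times\{u\}$ and $P\times\{v\}$ denote the images of $P$ under the simplicial embeddings $\transGlue{u}{e}$ and $\transGlue{v}{e}$ into the vertex fibers. So for $c(e)=\sum_P a_P\,(P\times e)$, the boundary $\partial c(e)$ splits into three parts: a part $\sum a_P\,\partial P\times e$ supported over $e^\circ$, a part in $\widehat{X}_e\times\{u\}$, and a part in $\widehat{X}_e\times\{v\}$.

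\textbf{Proving (1).} I will first show that the part over $e^\circ$ vanishes. Since $z$ is a cycle, $0=\partial z=\sum_\sigma\partial c(\sigma)$. Boundaries of vertex chains $c(w)$ lie in vertex fibers. The chains $\partial c(e')$ for $e'\neq e$ have no terms over $e^\circ$, because distinct open edges have disjoint preimages. Hence the $e^\circ$-part of $\partial c(e)$ is zero. What remains is exactly the restriction of $\partial c(e)$ to $\widehat{X}_e\times\{u\}$ plus its restriction to $\widehat{X}_e\times\{v\}$, which is $b(e,u)+b(e,v)$. This is the step where injectivity matters: the two end copies are disjoint because $u\neq v$ and the embeddings are injective, so the restrictions separate cleanly.

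\textbf{Proving (2) and (5).} Vanishing of the $e^\circ$-part gives $\sum a_P\,\partial P=0$ in $EG_e$. Then $b(e,v)=\pm\transGlue{v}{e}\bigl(\sum a_P P\bigr)$, and its boundary is $\pm\transGlue{v}{e}\bigl(\sum a_P\,\partial P\bigr)=0$, which is (2). Alternatively, (2) follows from (1) via $0=\partial\partial c(e)$, since the $u$- and $v$-restrictions have disjoint supports. For (5), the embedding $\transGlue{v}{e}$ is injective on cells, so $\Mass{b(e,v)}=\Mass{\sum a_P P}=\Mass{c(e)}$, which gives the inequality.

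\textbf{Proving (3) and (4).} For (3), restrict $\partial z=0$ to the fiber $\pi^{-1}(v)$. The contributions are $\partial c(v)$, the terms $b(e,v)$ from edges $e\ni v$ by (1), and nothing from any other cell. Summing gives (3). For (4), each cell of $\pi^{-1}(T)$ projects onto a unique open cell of $T$, so the chains $c(\sigma)$ have disjoint supports and their masses add up to $\Mass{z}$.

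The main obstacle is the sign and orientation bookkeeping in the product boundary formula, together with checking that no other cells of $\widehat{X}$ meet the relevant fibers. Cocompactness and the polyhedral structure make this routine.
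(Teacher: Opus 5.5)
Your proposal is correct and follows essentially the same route as the paper. The product-cell structure isolates the part of $\partial c(e)$ lying over $e^\circ$, which must vanish because no other term of the $T$-decomposition contributes there; (2)--(5) then follow from restricting $\partial z=0$ to vertex fibers, from disjointness of supports, and from the simplicial embeddings. The differences are minor: your argument for (2) via $\sum_P a_P\,\partial P=0$ and your equality $\Mass{b(e,v)}=\Mass{c(e)}$ slightly sharpen the paper's use of $\partial\partial c(e)=0$ and norm non-increase, and the clean separation in (1) needs only $u\neq v$ (the two end copies lie in different vertex fibers), not injectivity of the attaching maps.
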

\begin{proof}
    By \cref{prop:polyhedral-haefliger-model:product-cells}, every term of $\partial c(e)$ not supported over a vertex projects onto $e^\circ$.
    No boundary of another term in the $T$-decomposition projects onto $e^\circ$, so these terms form the part of $\partial z$ projecting onto $e^\circ$ and therefore vanish, proving \localref{tree-decomposition-properties:edge-boundary}.
    By \localref{tree-decomposition-properties:edge-boundary}, $\partial b(e,u) + \partial b(e,v) = 0$.
    Because $b(e,u)$ and $b(e,v)$ have disjoint supports, no cancellation can occur, implying \localref{tree-decomposition-properties:endpoint-cycles}.
    To see \localref{tree-decomposition-properties:vertex-boundary}, observe
    that
    $(\partial z)|_{\pi^{-1}(v)}
        =\partial c(v)+\sum_{e\ni v}b(e,v)=0$.

    The chains $c(\sigma)$ have pairwise disjoint cellular supports.
    Their masses therefore add without cancellation, implying \localref{tree-decomposition-properties:mass}.

    Finally, by \cref{prop:polyhedral-haefliger-model:product-cells}, every
    cell occurring in $c(e)$ is a product with $e$ and has one face over each
    endpoint.
    The simplicial embeddings from \cref{prop:polyhedral-haefliger-model:injective} make the induced endpoint maps norm non-increasing, proving \localref{tree-decomposition-properties:endpoint-mass}.
\end{proof}

\begin{rem}%
    \label{rem:leaf-collapse-decomposition}
    Let $z$ be a cellular $k$-cycle with a $T$-decomposition, and let
    $e=\{u,v\}$ be an edge of $T$ such that $v$ is a leaf. Set
    $E\defq\widehat{X}_e\times e$ and
    $E_u\defq\widehat{X}_e\times\{u\}$.
    Then $a\defq c(e)+c(v)$ is supported in
    $E\cup\widehat{X}_v$ and satisfies $a|_{E_u}=0$.
    Moreover, by \cref{lem:tree-decomposition-properties} we have $\partial a=b(e,u)+b(e,v)+\partial c(v)=b(e,u) \in C_{k-1}(E_u)$.
\end{rem}

\begin{lem}%
    \label{lem:tree collapse single cell-kd}
    Let $k\geq 2$ and let $e=\{u,v\}$ be an edge of $X$. Set
    $E\defq \widehat{X}_e\times e$, $E_u\defq \widehat{X}_e\times \{u\}$, and
    $E_v\defq \widehat{X}_e\times \{v\}$.
    Let $a\in C_k(\widehat{X})$ be a cellular $k$-chain whose support is contained
    in $E\cup\widehat{X}_v$. Assume that
    $a|_{E_u}=0$ and that
    $b\defq \partial a\in C_{k-1}(E_u)$. Then there are chains
    $a'\in C_k(E_u)$ and $H\in C_{k+1}(\widehat{X})$ such that
    \begin{enumerate}
        \item $\partial a'=b$;
        \item $\partial H=a-a'$;
        \item $\Mass{a'} \leq \Filling_{G_e}^{k}(\Mass{b})$;
        \item $\Mass{H} \leq \Filling_{G_v}^{k+1}\bigl(\Mass{a}+\Mass{a'}\bigr)+\Mass{a'}$.
    \end{enumerate}
\end{lem}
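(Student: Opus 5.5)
We follow the proof of \cref{lem:tree collapse single cell-2d}, replacing maps by chains. Since $b=\partial a$ is a boundary, it is a $(k-1)$-cycle in $E_u\cong \widehat{X}_e$. The fiber $\widehat{X}_e$ is a copy of $EG_e$, which is $(k-1)$-connected on the relevant skeleton by \cref{prop:polyhedral-haefliger-model}. Hence we may choose $a'_0\in C_k(E_u)$ with $\partial a'_0=b$ and $\Mass{a'_0}\leq \Filling_{G_e}^{k}(\Mass{b})$, and set $a'\defq a'_0$. This gives (1) and (3). Let $a'_1\in C_k(E_v)$ be the parallel copy of $a'_0$ at the other end of the cylinder, and let $P\in C_{k+1}(E)$ be the prism chain $a'_0\times e$. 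By \cref{prop:polyhedral-haefliger-model:product-cells} and the product boundary formula, with orientations chosen suitably,
\[
    \partial P = a'_1 - a'_0 - \partial a'_0\times e .
\]
Moreover, $\Mass{P}=\Mass{a'_0}$.

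Next we transfer $a$ into the vertex fiber. Write $a=c_E+c_v$, where $c_E$ collects the terms projecting onto $e^\circ$, i.e.\ terms of the form $\tau\times e$, and $c_v$ is supported in $\widehat{X}_v$. Consider the chain $a-a'_0+P$. Its boundary is $b-b+\partial P$-type cancellation, and it should equal a chain supported in $E_v\cup\widehat{X}_v$ plus the prism terms $c_E-\partial a'_0\times e$. Cleaner is to use the cellular ``collapse'' of $E$ onto $E_v$, as in \cref{lem:tree collapse single cell-2d}. Concretely, for each product cell $\tau\times e$ appearing in $c_E$, the boundary relation $\partial a=b\in C_{k-1}(E_u)$ forces the $E$-part of $\partial c_E$ to cancel. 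Since \cref{tree-decomposition-properties:edge-boundary}-style reasoning applies, $c_E=\beta\times e$ for a $(k-1)$-cycle-like chain $\beta$ with $\beta$ at the $u$-end equal to $b$. Then $\partial(c_E)=\pm(b-\beta_v)$, where $\beta_v$ is the copy of $\beta$ in $E_v$, and $\beta=\partial a'_0$. So $c_E=\pm(\partial a'_0)\times e$, and consequently
\[
    a-a'_0+P = c_v + a'_1 .
\]
This is a $k$-cycle supported in $\widehat{X}_v$, after identifying $E_v$ with its image under the simplicial embedding $\transGlue{v}{e}$ from \cref{prop:polyhedral-haefliger-model:injective}. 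Its mass is at most $\Mass{a}+\Mass{a'}$, because the embedding is norm non-increasing.

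Since $\widehat{X}_v\simeq EG_v$ is $k$-connected on the relevant skeleton, we fill this cycle by $Q\in C_{k+1}(\widehat{X}_v)$ with $\Mass{Q}\leq \Filling_{G_v}^{k+1}(\Mass{a}+\Mass{a'})$. Setting $H\defq Q-P$, with the signs adjusted, gives $\partial H=a-a'$, which is (2), and $\Mass{H}\leq \Mass{Q}+\Mass{a'}$, which is (4).

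The main obstacle is the identification $c_E=\pm(\partial a'_0)\times e$. It needs the product-cell structure from \cref{prop:polyhedral-haefliger-model:product-cells} together with injectivity of the endpoint maps. Then the only non-vanishing parts of $\partial c_E$ lie over the two ends, and $c_E$ is determined by its $u$-end boundary $b$. I would verify this carefully via the Künneth-type formula $\partial(\tau\times e)=\partial\tau\times e\pm(\tau_v-\tau_u)$. Vanishing of the $e^\circ$-part of $\partial a$ then makes the coefficient chain a cycle equal to $\pm b$. We need $k\geq 2$ so that $b$ is a genuine cycle of positive degree and $\Filling^k_{G_e}$ is defined.
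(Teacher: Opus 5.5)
Your proposal is correct and is essentially the paper's proof. The paper obtains your cycle $c_v-a'_1$ as $\zeta=r_\ast a-r_\ast a'$, where $r$ is the cellular retraction collapsing the interval factor of $E$. It obtains your prism $\pm a'\times e$ as $-P_k(a-a')=P_k(a')$, where $P$ is the chain homotopy between $j\circ r$ and $\id_Y$ relative to $\widehat{X}_v$. This spares it your explicit identification $c_E=\pm b\times e$, which is absorbed into $r_\ast$ killing product cells and into $P_k(a)=0$.

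One slip to fix: your displayed identity $a-a'_0+P=c_v+a'_1$ mixes degrees and signs. It should read $a-a'_0=\pm\partial P+(c_v-a'_1)$. The minus sign on $a'_1$ is what makes this a cycle, since $\partial c_v$ equals the copy of $b$ in $E_v$.
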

\begin{proof}
    Since $b=\partial a$, the chain $b$ is a $(k-1)$-cycle in $E_u$. Choose a
    filling $a'\in C_k(E_u)$ with $\partial a'=b$ and
    $\Mass{a'}\leq \Filling_{G_e}^{k}(\Mass{b})$.

    Put $Y\defq E\cup\widehat{X}_v$. The attaching map
    $E_v\to\widehat{X}_v$ is cellular and injective, so its
    induced map on cellular chains is norm non-increasing.

    Let $r\colon Y\longrightarrow \widehat{X}_v$ be the cellular retraction
    obtained by collapsing the interval factor of $E=\widehat{X}_e\times e$ to
    the endpoint $v$ and then using the attaching map
    $E_v\to\widehat{X}_v$. Let $a_v\defq r_\ast a'$. Then
    $\zeta\defq r_\ast a-a_v$ is a $k$-cycle in $\widehat{X}_v$, since
    $\partial\zeta=r_\ast b-r_\ast b=0$.
    Moreover, $\Mass{\zeta}\leq \Mass{a}+\Mass{a'}$.
    Hence, there is a chain
    $B\in C_{k+1}(\widehat{X}_v)$ such that $\partial B=\zeta$ and
    $\Mass{B}\leq \Filling_{G_v}^{k+1}\bigl(\Mass{a}+\Mass{a'}\bigr)$.

    Let $j\colon \widehat{X}_v\hookrightarrow Y$ be the inclusion.
    Then $j \circ r$ is homotopic to $\id_Y$ relative to $\widehat{X}_v$.
    Let $P_\ast$ be the chain homotopy induced by this homotopy.
    It vanishes on $j_\ast C_\ast(\widehat{X}_v)$.
    For every $q$-cell $\sigma$ of $\widehat{X}_e$, it satisfies
    $P_q(\sigma\times\{u\})=\pm(\sigma\times e)$ and
    $P_{q+1}(\sigma\times e)=0$.
    Thus, $P_k(a)=0$ because $a|_{E_u}=0$.
    Moreover, $P_k$ is norm non-increasing on $C_k(E_u)$.
    Consequently,
    \[
        \Mass{P_k(a-a')}
        =\Mass{P_k(a')}
        \leq\Mass{a'}.
    \]
    Since $a-a'$ is a cycle, we have
    $\partial P_k(a-a')=(j\circ r)_\ast(a-a')-(a-a')$.
    Moreover,
    $(j\circ r)_\ast(a-a')=j_\ast(r_\ast a-r_\ast a')=j_\ast\zeta$.
    Therefore, $H\defq j_\ast B-P_k(a-a')$ satisfies $\partial H=a-a'$ and
    $\Mass{H}\leq \Mass{B}+\Mass{a'}$, which is the claimed bound.
\end{proof}

\begin{rem}%
    \label{rem:leaf-collapse-preserves-tree-decomposition}
    In the setting of \cref{rem:leaf-collapse-decomposition}, suppose that
    $a'\in C_k(E_u)$ satisfies $\partial a'=b(e,u)$, and let $T'$ be obtained
    from $T$ by deleting $e$ and its leaf $v$. Then $z'\defq z-a+a'$ is a
    cycle supported over $T'$. Its $T'$-decomposition is obtained by
    replacing $c(u)$ with $c(u)+a'$.
    Moreover, $\Mass{a}+\Mass{z'}\leq\Mass{z}+\Mass{a'}$.
    Indeed, $a$ and $z-a$ have disjoint supports, while $z'=(z-a)+a'$.
\end{rem}

\begin{thm}%
    \label{thm:filling-tree-diagrams-kd}
    Let $k\geq 2$ and let $z \in C_k(\widehat{X})$ be a cycle with $\Mass{z} \leq n$.
    Assume that $z$ is supported in $\pi^{-1}(T)$ for a finite subtree $T\subseteq X$.
    Then $z$ bounds a cellular $(k+1)$-chain $H$ such that
    \[
        \Mass{H}\leq
        \overline{\max_e \Filling_{G_e}^{k}}(n)
        +\overline{\max_v \Filling_{G_v}^{k+1}}
        \bigl(n+2 \cdot \overline{\max_e \Filling_{G_e}^{k}}(n)\bigr),
    \]
    where the maxima are taken over all edges and vertices of $X$, respectively.
\end{thm}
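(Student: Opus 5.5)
We argue by induction on the number of edges of $T$, iterating the leaf collapse of \cref{rem:leaf-collapse-decomposition} and \cref{lem:tree collapse single cell-kd} exactly as in the proofs of \cref{thm:filling tree diagrams} and \cref{thm:filling-tree-diagrams-2d}. First we fix a collapse order of $T$ onto a root vertex $v_0$ by successively removing leaves. Let $z=z_0,z_1,\ldots,z_m=z_{\mathrm{fin}}$ be the cycles obtained as follows. At step $j$ we remove the leaf edge $e_j=\{u_j,v_j\}$ with leaf $v_j$ and set $a_j\defq c_j(e_j)+c_j(v_j)$, computed in the current $T_j$-decomposition of $z_j$. By \cref{rem:leaf-collapse-decomposition}, $a_j$ satisfies the hypotheses of \cref{lem:tree collapse single cell-kd}. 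That lemma gives $a'_j\in C_k(E_{u_j})$ and $H_j$ with $\partial H_j=a_j-a'_j$. We then set $z_{j+1}\defq z_j-a_j+a'_j$, which by \cref{rem:leaf-collapse-preserves-tree-decomposition} is supported over $T_{j+1}=T_j\setminus\{e_j,v_j\}$. At the end $z_m$ is supported in $\widehat{X}_{v_0}$, and we fill it there by a chain $H_m$. Then $H\defq\sum_j H_j+H_m$ satisfies $\partial H=z$, because $z_j-z_{j+1}=a_j-a'_j$.

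The key bookkeeping is to express everything through the original decomposition $(c(\sigma))$ of $z$. By \cref{rem:leaf-collapse-preserves-tree-decomposition}, the current $c_j(v)$ equals $c(v)+\sum a'_i$, where the sum runs over the edges $e_i$ already collapsed into $v$. The edge parts $c_j(e)=c(e)$ never change. Hence the boundary cycle used at step $j$ is $b(e_j,u_j)$ of the \emph{original} decomposition, and by \cref{tree-decomposition-properties:endpoint-mass} its mass is at most $\Mass{c(e_j)}$. It follows that
\[
\Mass{a'_j}\leq \max_e\Filling_{G_e}^{k}\bigl(\Mass{c(e_j)}\bigr),
\]
and summing with superadditivity and \cref{tree-decomposition-properties:mass} gives $\sum_j\Mass{a'_j}\leq \overline{\max_e\Filling_{G_e}^{k}}(n)$. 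This already accounts for the first summand, which comes from the collar terms $\Mass{a'_j}$ in the bound of \cref{lem:tree collapse single cell-kd}. The vertex fillings at the vertex $v=v_j$ cost at most $\max_v\Filling_{G_v}^{k+1}(x_v)$, where
\[
x_v\defq \Mass{c(e_j)}+\Mass{c(v)}+\sum_{i\to v}\Mass{a'_i}+\Mass{a'_j}.
\]
The final filling at $v_0$ has the analogous form with $x_{v_0}\defq \Mass{c(v_0)}+\sum_{i\to v_0}\Mass{a'_i}$. Each $a'_i$ appears in at most two of the $x_v$: once as the new chain at its own step and once absorbed into its parent vertex. Together with \cref{tree-decomposition-properties:mass} this yields
\[
\sum_v x_v\leq n+2\cdot\overline{\max_e\Filling_{G_e}^{k}}(n).
\]
Superadditivity of $\overline{\max_v\Filling_{G_v}^{k+1}}$ then gives the second summand.

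The main obstacle is this accounting: I must check that $\Mass{a_j}\leq \Mass{c(e_j)}+\Mass{c(v_j)}+\sum_{i\to v_j}\Mass{a'_i}$, so that cancellation can only help. I must also make sure the mass of the original $c(\sigma)$ is charged only once overall, not once per step. That every quantity is controlled by the original decomposition, rather than by the growing intermediate cycles $z_j$, is guaranteed by the support disjointness recorded in \cref{rem:leaf-collapse-preserves-tree-decomposition}.
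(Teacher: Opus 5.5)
Your proposal is correct and follows the paper's proof: you use the same leaf-by-leaf collapse via \cref{rem:leaf-collapse-decomposition}, \cref{lem:tree collapse single cell-kd} and \cref{rem:leaf-collapse-preserves-tree-decomposition}, and you control $A\defq\sum_j\Mass{a'_j}$ through the original edge chains $c(e)$ and superadditivity. The only cosmetic difference is that you obtain the bound $n+2A$ on the total input to the vertex fillings by charging each $a'_i$ to two vertices (your $x_v$ sum exactly to $\Mass{z}+2A$), whereas the paper telescopes the inequality $\Mass{a_i}+\Mass{z_i}\leq\Mass{z_{i-1}}+\Mass{a'_i}$; both give the same estimate.
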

\begin{proof}
    Let $r$ denote the number of edges in $T$ and fix a vertex $v_0$ of $T$.
    We use the iterative collapse procedure from
    the proofs of \cref{thm:filling tree diagrams,thm:filling-tree-diagrams-2d},
    which treat the homotopical cases $k=1,2$. Let $e_1,\ldots, e_r$ be the sequence of edges we use to collapse $T$ onto $v_0$ iteratively and let $z_0\defq z$.

    We write $e_i=\{u_i,v_i\}$ for the edge we remove in the $i$-th step by collapsing its leaf $v_i$, $z_{i-1}$ for the cycle we obtained from the $(i-1)$-th step, and $c_{i-1}$ and $b_{i-1}$ for the corresponding chains of the $T$-decomposition of $z_{i-1}$. Let $a_i\defq c_{i-1}(e_i)+c_{i-1}(v_i)$ and $\beta_i\defq b_{i-1}(e_i,u_i)$.
    By \cref{rem:leaf-collapse-decomposition,lem:tree collapse single cell-kd}, there are chains $a_i'$ and $H_i$ such that $\partial H_i=a_i-a_i'$.
    Setting $z_i\defq z_{i-1}-a_i+a_i'=z_{i-1}-\partial H_i$ yields the decomposition after the collapse of $e_i$ by \cref{rem:leaf-collapse-preserves-tree-decomposition}. In particular, $b_i(e_j,w)=b_0(e_j,w)$ for every edge $e_j$ of $T$ with $j>i$ and every vertex $w$ of such an edge $e_j$. After all $r$ edges have been collapsed, $z_r$ is supported in $\widehat{X}_{v_0}$ and $\partial(\sum_iH_i)=z_0-z_r$.

    Each $\beta_i$ is therefore equal to $b_0(e,w)$ for some edge $e$ of $T$
    and some vertex $w$ of $e$.
    By \cref{lem:tree-decomposition-properties},
    \[
        \sum_i\Mass{\beta_i}
        \leq \sum_{e\subseteq T}\Mass{c_0(e)}
        \leq \Mass{z_0}
        \leq n.
    \]
    Set $A\defq \sum_i\Mass{a_i'}$.
    By \cref{lem:tree collapse single cell-kd} and superadditivity, we obtain
    \[
        A
        \leq \overline{\max_e \Filling_{G_e}^{k}}
        \Bigl(\sum_i\Mass{\beta_i}\Bigr)
        \leq \overline{\max_e\Filling_{G_e}^{k}}(n).
    \]

    For the $i$-th collapse, the inequality in
    \cref{rem:leaf-collapse-preserves-tree-decomposition} gives
    \[\Mass{a_i}+\Mass{z_i} \leq \Mass{z_{i-1}}+\Mass{a_i'}.\]
    Summing these inequalities for all $i$ and cancelling the terms
    $\Mass{z_1},\ldots,\Mass{z_{r-1}}$ gives
    \[
        \Mass{z_r} + \sum_i\Mass{a_i}
        \leq \Mass{z_0}+\sum_i\Mass{a_i'}
        \leq n+A,
    \]
    hence $\Mass{z_r} + \sum_i\bigl(\Mass{a_i}+\Mass{a_i'}\bigr) \leq n+2A$.

    Choose a filling $H_0$ of $z_r$ in $\widehat{X}_{v_0}$ such that
    $\Mass{H_0} \leq \Filling_{G_{v_0}}^{k+1}(\Mass{z_r})$.
    By \cref{lem:tree collapse single cell-kd}, each $H_i$ with $1\leq i \leq r$ satisfies
    \(
    \Mass{H_i}
    \leq \Filling_{G_{v_i}}^{k+1}
    \bigl(\Mass{a_i}+\Mass{a_i'}\bigr)+\Mass{a_i'}.
    \)
    Set $H\defq H_0+\sum_iH_i$. Then
    \begin{align*}
        \Mass{H}
         &\leq \Mass{H_0}+ \sum\nolimits_i \Mass{H_i} \\
         &\leq \overline{\max_v \Filling_{G_v}^{k+1}}\Bigl(\Mass{z_r} + \sum\nolimits_i\bigl(\Mass{a_i}+\Mass{a_i'}\bigr)\Bigr) + A \\
         &\leq \overline{\max_v \Filling_{G_v}^{k+1}}(n+2A) + A \\
         &\leq \overline{\max_v \Filling_{G_v}^{k+1}}(n+2\cdot \overline{\max_e \Filling_{G_e}^{k}}(n)) + \overline{\max_e \Filling_{G_e}^{k}}(n).
    \end{align*}
    Finally, since $\partial H_0=z_r$ and $\partial(\sum_iH_i)=z-z_r$ we have $\partial H= z$. This completes the proof.
\end{proof}

\begin{proof}[Proof of \cref{thm:high-dimensional-Brown-trees}]
    This is now a direct consequence of \cref{thm:filling-tree-diagrams-kd}.
\end{proof}

\section{Special cases and applications}
\label{sec:applications}
In this section, we discuss some special cases and applications of our methods to group extensions and amalgamated products of nilpotent groups.

\subsection{Group extensions}

Let $1 \to H \to G \to Q \to 1$ be a group extension, where
$H$ and $Q$ are finitely presented.

Throughout, fix a finite presentation of $Q$ and let $X$ be the corresponding
Cayley complex. It becomes a $G$-polyhedral complex via the quotient map $G \onto Q$, and all stabilizers are $H$.
Moreover, fix symmetric generating sets $S_H$ and $S_G$ of $H$ and $G$, respectively, such that $S_H \subseteq S_G$.

Since all vertex and edge stabilizers of this action are equal to $H$ and
$\Dehn_X\approx\Dehn_Q$, \cref{thm:Dehn-Brown-complexes} yields the bound
\[
    \Dehn_G(n)
    \preccurlyeq
    \Dehn_Q(n)\cdot
    \Dehn_H\bigl(\overline{\dist_H^G}(\Dehn_Q(n))\bigr).
\]
This bound is not optimal in general; see \cref{rem:monodromy-bound-better-than-distortion}.
The goal of this subsection is to give a better upper bound
for group extensions by establishing a tighter control on the lengths of the paths in \cref{lem:tree collapse single cell}.

\begin{rem}%
    \label{rem:edge-elements-from-haefliger-data}
    Fix a choice of the data in \cref{rem:description-haefliger-construction}.
    If $e=(v,w)$ is an oriented edge of $\lquot{X}{G}$, define
    \[
        m_e \defq \transElem{w}{e}^{\vphantom{1}}\transElem{v}{e}^{-1}.
    \]
    With this convention, transporting a point $a \in \widehat{X}_v$
    along $\widehat{X}_e \times [0,1]$ results in the element $m_e^{-1} a m_e \in \widehat{X}_w$.
    We have $m_{\overline{e}}=m_e^{-1}$, where $\overline{e} = (w,v)$ is the same edge
    with the opposite orientation.
\end{rem}

\begin{defn}[Monodromy distortion]%
    \label{def:monodromy-distortion}
    An \emph{$\widehat{X}$-path} is a sequence
    \[
        \widehat{\gamma} = (h_0,e_1,h_1,\ldots,e_k,h_k),
    \]
    where $h_i \in H$ and the $e_i=(v_{i-1},v_i)$ are oriented edges in $X$
    which form an oriented path. It is called an \emph{$\widehat{X}$-loop} if
    $v_k=v_0$. Its length is defined as
    \[
        \ell_{\widehat{X}}(\widehat{\gamma}) \defq k + \sum_{i=0}^{k} \ell_H(h_i).
    \]

    For an oriented edge $e_i$ in $X$, let $m_{e_i}$ denote the element associated
    to the oriented edge $p(e_i)$ of $\lquot{X}{G}$ as in \cref{rem:edge-elements-from-haefliger-data}.
    In this way, we can associate to each $\widehat{X}$-loop $\widehat{\gamma}$ an element
    \[
        W(\widehat{\gamma}) \defq h_0 m_{e_1} h_1 \cdots m_{e_k}h_k \in H.
    \]
    The element $W(\widehat{\gamma})$ is obtained from the vertical pieces $h_i$ under the monodromy map defined by the path $e_1\cdots e_i$, that is, by pushing each $h_i$ to the fiber over $v_0$ along this path.

    We define the \emph{monodromy distortion} function $\MonDist_{\widehat{X}} \colon \mathbb{N}\to \mathbb{R}$ by
    \[
        \MonDist_{\widehat{X}}(n) \defq \max \{ \ell_H(W(\widehat{\gamma})) \mid \ell_{\widehat{X}}(\widehat{\gamma})\leq n\}.
    \]
\end{defn}

\begin{lem}%
    \label{lem:monodromy-well-defined}
    Let $\widehat{X}$ and $\widehat{X}'$ be constructed using two different choices of
    elements $\{\transElem{v}{e}\}$ and $\{\transElem{v}{e}'\}$.
    Then $\MonDist_{\widehat{X}} \approx \MonDist_{\widehat{X}'}$.
\end{lem}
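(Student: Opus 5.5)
The plan is to show that changing the choice of the elements $\transElem{v}{e}$ changes each $m_e$ only by multiplication on both sides by elements of $H$ from a fixed finite set. A change of the data then amounts to reinserting boundedly many bounded-length letters into the vertical pieces of an $\widehat{X}$-path. Throughout, I keep the other data of \cref{rem:description-haefliger-construction} fixed, namely the chosen representatives $\overline{\sigma}$ and the stabilizers $G_\sigma = H$, and compare only the two families $\{\transElem{v}{e}\}$ and $\{\transElem{v}{e}'\}$.

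\emph{Step 1: compare the edge elements.} Let $v \subset e$ be an incidence in $\lquot{X}{G}$. Both $\transElem{v}{e}$ and $\transElem{v}{e}'$ map the face $v_e$ of $\overline{e}$ to $\overline{v}$. Hence $k_{v,e} \defq \transElem{v}{e}'\transElem{v}{e}^{-1}$ fixes $\overline{v}$, so $k_{v,e} \in G_{\overline{v}} = H$. For an oriented edge $e=(v,w)$ this gives
\[
    m_e' = \transElem{w}{e}'\transElem{v}{e}'^{-1} = k_{w,e}\, m_e\, k_{v,e}^{-1}.
\]
This is compatible with $m_{\overline{e}} = m_e^{-1}$. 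Since $\lquot{X}{G}$ has finitely many edges, the constant $C \defq \max_{v\subset e}\ell_H(k_{v,e})$ is finite.

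\emph{Step 2: translate paths.} Let $\widehat{\gamma} = (h_0,e_1,h_1,\ldots,e_k,h_k)$ be an $\widehat{X}$-loop, and write $p(e_i) = (v_{i-1}',v_i')$ for its image in $\lquot{X}{G}$. Expanding $W'(\widehat{\gamma}) = h_0 m'_{e_1} h_1 \cdots m'_{e_k} h_k$ with Step 1 gives $W'(\widehat{\gamma}) = W(\widetilde{\gamma})$, where $\widetilde{\gamma}$ has the same edges and the modified vertical pieces
\[
    \tilde h_0 = h_0 k_{v_1',e_1},\qquad \tilde h_i = k_{v_{i-1}',e_i}^{-1}\, h_i\, k_{v_{i+1}',e_{i+1}} \ (0<i<k),\qquad \tilde h_k = k_{v_{k-1}',e_k}^{-1} h_k,
\]
with the indices placed as dictated by the orientation of each $e_i$. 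The path $\widetilde{\gamma}$ is again an $\widehat{X}$-loop. Its length satisfies $\ell_{\widehat{X}}(\widetilde{\gamma}) \leq \ell_{\widehat{X}}(\widehat{\gamma}) + 2Ck \leq (2C+1)\,\ell_{\widehat{X}}(\widehat{\gamma})$, because $k \leq \ell_{\widehat{X}}(\widehat{\gamma})$.

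\emph{Step 3: conclude.} Step 2 shows $\MonDist_{\widehat{X}'}(n) \leq \MonDist_{\widehat{X}}((2C+1)n)$. Exchanging the roles of the two families gives the reverse inequality, and hence $\MonDist_{\widehat{X}} \approx \MonDist_{\widehat{X}'}$. If one also allows different finite generating sets of $H$, this only changes $\ell_H$ by a bi-Lipschitz factor and is absorbed by the same equivalence.

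The main obstacle is bookkeeping. One has to check that the conjugating elements attach to the correct endpoints for each orientation of $e_i$, so that consecutive corrections land on the same vertical piece $h_i$. In particular, the element $k$ at the terminal vertex of $e_i$ and the one at the initial vertex of $e_{i+1}$ must both sit next to $h_i$. Step 1's formula gives exactly this, since it places $k_{w,e}$ on the left of $m_e$ (next to the preceding vertical piece) and $k_{v,e}^{-1}$ on the right (next to the following one), for either orientation.
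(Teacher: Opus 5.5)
Your proposal is correct and is essentially the paper's proof. Your correction elements $k_{v,e}=h_{v,e}'h_{v,e}^{-1}\in H$ are the inverses of the paper's $a_{v,e}$, and your modified vertical pieces $\tilde h_i$ coincide with the paper's $g_i$; the bound $\ell_{\widehat{X}}(\widetilde{\gamma})\leq \ell_{\widehat{X}'}(\widehat{\gamma})+2Ck\leq (2C+1)\,\ell_{\widehat{X}'}(\widehat{\gamma})$, followed by the symmetry argument, is exactly how the paper concludes.
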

\begin{proof}
    For every incidence $v\subset e$ in $\lquot{X}{G}$, set $a_{v,e}\defq \transElem{v}{e}(\transElem{v}{e}')^{-1}\in H$.
    If $e=(v,w)$, then the corresponding elements from \cref{rem:edge-elements-from-haefliger-data} satisfy
    \[
        m_e = a_{w,e} m'_e a_{v,e}^{-1}.
    \]
    Let $C \defq \max \{ \ell_H(a_{v,e}) \}$.
    Write $W_{\widehat{X}}$ and $W_{\widehat{X}'}$ for the words formed using
    the elements $m_e$ and $m'_e$, respectively.
    Given a $\widehat{X}'$-loop
    $\widehat{\gamma}=(h_0,e_1,h_1,\ldots,e_k,h_k)$, where
    $e_i=(v_{i-1},v_i)$, define an $\widehat{X}$-loop
    $\widehat{\eta}=(g_0,e_1,g_1,\ldots,e_k,g_k)$ by
    \[
        g_i \defq \begin{cases}
            h_0a_{v_1,e_1}^{-1}                          & i = 0; \\
            a_{v_{i-1},e_i} h_i a_{v_{i+1},e_{i+1}}^{-1} & 1 \leq i \leq k-1; \\
            a_{v_{k-1},e_k} h_k                          & i = k.
        \end{cases}
    \]
    Then $W_{\widehat{X}}(\widehat{\eta}) = W_{\widehat{X}'}(\widehat{\gamma})$ and
    \[
        \ell_{\widehat{X}}(\widehat{\eta})
        \leq
        \ell_{\widehat{X}'}(\widehat{\gamma}) + 2Ck
        \leq
        (2C+1)\ell_{\widehat{X}'}(\widehat{\gamma}).
    \]
    Hence, $\MonDist_{\widehat{X}'}(n)\leq \MonDist_{\widehat{X}}((2C+1)n)$.
    Because the argument is symmetric, we obtain $\MonDist_{\widehat{X}}\approx \MonDist_{\widehat{X}'}$.
\end{proof}

\begin{defn}
    The \emph{monodromy distortion} of the extension $1 \to H \to G \to Q \to 1$ is defined as $\MonDist_H^G \defq \MonDist_{\widehat{X}}$
    for some choice of elements $\{\transElem{v}{e}\}$.
\end{defn}

\begin{lem}%
    \label{lem:monodromy-estimate}
    Let $\widehat{\gamma}$ be an $\widehat{X}$-loop based at $v_0$.
    Then
    \[
        d_{\widehat{X}_{v_0}}\bigl(\widehat{\gamma}(0),\widehat{\gamma}(1)\bigr)
        \leq \MonDist_{\widehat{X}}\bigl(\ell_{\widehat{X}}(\widehat{\gamma})\bigr).
    \]
\end{lem}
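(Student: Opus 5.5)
The plan is to reduce the statement to the definition of $\MonDist_{\widehat{X}}$ by showing two things. First, the endpoint $\widehat{\gamma}(1)$, transported back to the fiber $\widehat{X}_{v_0}$ along the edge cylinders, differs from the starting point $\widehat{\gamma}(0)$ by the element $W(\widehat{\gamma})$. Second, the combinatorial distance in $\widehat{X}_{v_0}\cong$ Cayley graph of $H$ is bounded by $\ell_H(W(\widehat{\gamma}))$.

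\textbf{Interpreting the loop.} I read the $\widehat{X}$-loop $\widehat{\gamma}=(h_0,e_1,h_1,\ldots,e_k,h_k)$ as the combinatorial path in $\widehat{X}$ obtained by concatenation. It first travels in $\widehat{X}_{v_0}$ along a geodesic word for $h_0$. It then crosses the cylinder $\widehat{X}_{e_1}\times[0,1]$ along a horizontal edge. It then travels in $\widehat{X}_{v_1}$ along a word for $h_1$, and so on. Its start $\widehat{\gamma}(0)$ and end $\widehat{\gamma}(1)$ both lie over $v_0$. By $G$-equivariance and since all stabilizers equal $H$ (the fibers are copies of the Cayley graph of $H$), I may translate so that $\widehat{\gamma}(0)$ is the identity vertex of $\widehat{X}_{v_0}$.

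\textbf{Computing the endpoint.} Using \cref{rem:edge-elements-from-haefliger-data}, crossing the edge $e_i$ sends a vertex labelled $a$ in $\widehat{X}_{v_{i-1}}$ to the vertex labelled $m_{e_i}^{-1} a m_{e_i}$ in $\widehat{X}_{v_i}$. Equivalently, I can push everything back to $v_0$ along $e_1\cdots e_i$. I will prove by induction on $i$ the following claim: after the segment $h_i$, the current vertex, pushed back to $\widehat{X}_{v_0}$ along $e_i,\ldots,e_1$, is $h_0 m_{e_1} h_1 \cdots m_{e_i} h_i$ up to the fixed conjugation $(m_{e_1}\cdots m_{e_i})^{-1}(\cdot)$. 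The inductive step is right multiplication by a generator, which commutes with the left-equivariant description of the fibers. At $i=k$ we have $v_k=v_0$. I therefore need to compare the label in the fiber over $v_k$ with the label in $\widehat{X}_{v_0}$, and conclude that $\widehat{\gamma}(1)$ corresponds to $W(\widehat{\gamma})$ relative to $\widehat{\gamma}(0)$.

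\textbf{Main obstacle.} The hard part is exactly this bookkeeping at the closing step. The loop returns to $v_0$ in $X$, but the vertices of $X$ are elements of $Q$, and the fiber over $v_k$ is the fiber over $v_0$. I must therefore check that the accumulated monodromy $m_{e_1}\cdots m_{e_k}$ is precisely what the definition of $W$ accounts for. I also must check that no extra correction from the choice of $\transElem{v}{e}$ appears. I would handle this by working directly in $G$. I identify vertices of $\widehat{X}_{gH}$ with elements of the coset $gH\subset G$, so that horizontal edges correspond to right multiplication by the lifts of $Q$-generators encoded by the $\transElem{v}{e}$. With this identification, the endpoint is literally the product of the labels read along $\widehat{\gamma}$, and rewriting this product gives $\widehat{\gamma}(0)\,W(\widehat{\gamma})$. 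Once this is settled, choose a geodesic word for $W(\widehat{\gamma})$ in $S_H$. It gives a path in $\widehat{X}_{v_0}$ of length $\ell_H(W(\widehat{\gamma}))$ between $\widehat{\gamma}(0)$ and $\widehat{\gamma}(1)$. This length is at most $\MonDist_{\widehat{X}}(\ell_{\widehat{X}}(\widehat{\gamma}))$ by definition, which proves the lemma.
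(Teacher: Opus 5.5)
Your proposal is correct and follows essentially the paper's route. Both arguments show that $W(\widehat{\gamma})$ labels a path in $\widehat{X}_{v_0}$ from $\widehat{\gamma}(0)$ to $\widehat{\gamma}(1)$ and then apply the definition of $\MonDist_{\widehat{X}}$; the paper handles your ``closing step'' by rewriting $W(\widehat{\gamma})=h_0(u_1h_1u_1^{-1})\cdots(u_kh_ku_k^{-1})u_k$ with $u_i=m_{e_1}\cdots m_{e_i}$ and $u_k\in H$.

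Your coset picture in $G$ is a cleaner way to do this bookkeeping. Horizontal edges act by right multiplication by $m_e$, which is exactly what the transport rule $a\mapsto m_e^{-1}am_e$ encodes, so the endpoint is literally $\widehat{\gamma}(0)\,h_0m_{e_1}h_1\cdots m_{e_k}h_k$. This also makes your intermediate induction claim unnecessary. That claim is misstated anyway: the current label there is the left translate $u_i^{-1}h_0m_{e_1}\cdots m_{e_i}h_i$, not a conjugate.
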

\begin{proof}
    Let $\widehat{\gamma}=(h_0,e_1,h_1,\ldots,e_k,h_k)$, $n=\ell_{\widehat{X}}(\widehat{\gamma})$, and set
    $u_i \defq m_{e_1}\cdots m_{e_i}$. Note that $u_k \in H$, as $\widehat{\gamma}$ is a loop.
    The associated word $W(\widehat{\gamma})$ can be written as
    \begin{align}
        W(\widehat{\gamma})
         &= h_0m_{e_1}h_1\cdots m_{e_k}h_k\notag \\
         &= h_0 (u_1 h_1 u_1^{-1}) (u_2 h_2 u_2^{-1}) \cdots (u_k h_k u_k^{-1}) u_k.\label{eq:induced-word-by-conjugates}
    \end{align}
    The formula \eqref{eq:induced-word-by-conjugates} describes the path in
    $\widehat{X}_{v_0}$ obtained by pulling the vertical pieces $h_i$ back along
    $e_1,\ldots,e_i$ to the base vertex fiber.
    Hence, the word $W(\widehat{\gamma})$ labels a path in $\widehat{X}_{v_0}$ from
    $\widehat{\gamma}(0)$ to $\widehat{\gamma}(1)$.
    Therefore,
    \[
        d_{\widehat{X}_{v_0}}\bigl(\widehat{\gamma}(0),\widehat{\gamma}(1)\bigr)
        \leq \ell_H(W(\widehat{\gamma}))
        \leq \MonDist_{\widehat{X}}(n).\qedhere
    \]
\end{proof}

\begin{thm}%
    \label{thm:bound-group-extensions}
    Let $1 \to H \to G \to Q \to 1$ be a group extension, where $Q$ and $H$ are finitely presented.
    Then $\Dehn_G(n) \preccurlyeq \Dehn_Q(n) \cdot \Dehn_H\bigl(\overline{\MonDist_H^G}(\Dehn_Q(n))\bigr)$.
\end{thm}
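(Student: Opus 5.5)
We follow the proof of \cref{thm:Dehn-Brown-complexes} verbatim, applied to the Cayley complex $X$ of $Q$ with all cell stabilizers equal to $H$, and change only the length estimate \cref{eq:estimate-gamma-prime}. The distortion bound entered the proof of \cref{thm:filling tree diagrams} solely through $\ell(\gamma'_{e_i})\leq \dist^G_{G_{e_i}}(D\cdot\ell(\gamma_{e_i}))$. We replace it with $\ell(\gamma'_{e_i})\leq \MonDist_{\widehat{X}}(\ell(\gamma_{e_i})+2)$. The remaining bookkeeping then goes through with $\edgeDist_X$ replaced by $\MonDist_{\widehat{X}}$; by \cref{lem:monodromy-well-defined}, the latter depends on no choices up to $\approx$. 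Here $\Dehn_X\approx\Dehn_Q$, and since $[G_e:G_\sigma]=1$ for all cells, \cref{prop:reduction to trees} applies with $C(X)=0$.

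The key step is a lemma in the setting of \cref{lem:tree collapse single cell}. Let $\gamma=\nu_1\cdot\mu\cdot\nu_2$ with $\pi\circ\gamma(0)=\pi\circ\gamma(1)=u$ and $\pi\circ\mu=\Const_v$. Consider the path $\gamma_{e_i}$ that actually arises at this stage of the iterative collapse. It is a concatenation of vertical pieces (paths in fibres $EG_w=\widehat{X}_w$, each homotopic rel endpoints to a geodesic labelled by some $h_j\in H$) and horizontal edges over oriented edges of the image of the subtree $T_i$. After replacing each vertical piece by a word of no greater length, this data is exactly an $\widehat{X}$-path $\widehat{\gamma}$ with $\ell_{\widehat{X}}(\widehat{\gamma})\leq\ell(\gamma_{e_i})$, and it is an $\widehat{X}$-loop based at $u$ because the projection traverses a subtree and returns. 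By \cref{lem:monodromy-estimate}, $d_{\widehat{X}_e}(\gamma(0),\gamma(1))=d_{\widehat{X}_u}(\gamma(0),\gamma(1))\leq \MonDist_{\widehat{X}}(\ell(\gamma))$. Here we use that the fibres over $e$ and $u$ are both copies of the Cayley graph of $H$, glued isometrically by \cref{rem:polyhderal-model-1-skeleton}.

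We need to apply this to the original subpaths $\gamma_{e_i}$, not to the already shortened paths produced at earlier collapse steps. The endpoints of $\gamma_{e_i}$ are unchanged by the earlier homotopies, which fix endpoints, so the estimate may be taken with respect to the original subpath, whose $\widehat{X}$-length is at most its combinatorial length. This observation is what makes the summation work. Along a vertex $v$, the lengths $\ell(\gamma_{e_i})$ of the subpaths over the disjoint subtrees $T_i$ add up to at most $n$, so superadditivity gives the bound $\Dehn_H\bigl(n+\overline{\MonDist_{\widehat{X}}}(n)\bigr)$ for each vertex filling, exactly as in \cref{thm:filling tree diagrams}.

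The main obstacle is justifying that the relevant quantity is the monodromy of the $\widehat{X}$-loop. One has to check carefully that composing the gluings $\transGlue{v}{e}$ along an oriented edge path really transports a vertical label $h$ to $m_e^{-1}hm_e$, matching the convention in \cref{rem:edge-elements-from-haefliger-data}. The remaining steps are routine. Feeding the new tree estimate into \cref{prop:reduction to trees} produces loops of length $m_n\preccurlyeq\Dehn_Q(n)$, and the final computation is identical to the proof of \cref{thm:Dehn-Brown-complexes}. This gives $\Dehn_G(n)\preccurlyeq \Dehn_Q(n)\cdot\Dehn_H\bigl(\overline{\MonDist_H^G}(\Dehn_Q(n))\bigr)$, after assuming, as there, that all functions are at least linear.
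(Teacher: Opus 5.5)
This is essentially the paper's proof. You replace \cref{eq:estimate-gamma-prime} for the original subpaths $\gamma_{e_i}$, whose endpoints are untouched by the earlier collapses, with the bound from \cref{lem:monodromy-estimate}, and then rerun \cref{thm:filling tree diagrams} and the proof of \cref{thm:Dehn-Brown-complexes} with $C(X)=0$.

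The one inaccuracy is your claim that the fibres are glued isometrically, so that $d_{\widehat{X}_e}=d_{\widehat{X}_u}$. The map $\transGlue{u}{e}$ is induced by conjugation by $\transElem{u}{e}$ and, after the enlargement of generating sets in \cref{rem:polyhderal-model-1-skeleton}, it is only injective and $1$-Lipschitz. You therefore only get $d_{\widehat{X}_e}\leq C\,d_{\widehat{X}_u}$ with $C=\max_s\ell_H(\transConj{u}{e}^{-1}(s))$. Likewise, converting vertical pieces into $\ell_H$-lengths holds only up to a constant. This is exactly the constant the paper introduces, and it is harmless up to $\approx$.
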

\begin{proof}
    Let $S_H$ be the chosen finite generating set of $H$.
    Recall that, for an incidence $v\subset e$ in $\lquot{X}{G}$,
    $\transConj{v}{e}\colon H\to H$ denotes conjugation by $\transElem{v}{e}$, so that the attaching
    map $\transGlue{v}{e}\colon \widehat{X}_e\to \widehat{X}_v$ is induced by
    $\transConj{v}{e}$ on vertices. Set
    \[
        C \defq
        \max \bigl\{
        1,\ell_H(\transConj{v}{e}^{-1}(s))
        \mid v \subset e \subseteq \lquot{X}{G} \text{ and } s \in S_H \bigr\}.
    \]
    Then $\ell_H(\transConj{v}{e}^{-1}(h)) \leq C \ell_H(h)$ for every $h \in H$.
    Let $\gamma_{e_i}$ be the subpaths from the proof of \cref{thm:filling tree diagrams}
    and denote their endpoints by $a_i, b_i \in H$ identified as vertices in $\widehat{X}_{v}$.
    The endpoints of the paths $\gamma'_{e_i} \subset \widehat{X}_{e}$ are therefore
    $a'_i = \transGlue{v}{e}^{-1}(a_i)$ and $b'_i = \transGlue{v}{e}^{-1}(b_i)$, respectively.
    Then
    \begin{equation}%
        \label{eq:monodromy-upper-bound-trees}
        \ell(\gamma'_{e_i}) = d_{\widehat{X}_e}(a'_i,b'_i)
        \leq C \cdot d_{\widehat{X}_v}(a_i,b_i)
        \leq C \cdot \MonDist_{\widehat{X}}(\ell(\gamma_{e_i})),
    \end{equation}
    where the last inequality is due to \cref{lem:monodromy-estimate}.
    Replacing the inequality $\ell(\gamma'_{e_i}) \leq \edgeDist_X( D \cdot \ell(\gamma_{e_i}))$
    by \cref{eq:monodromy-upper-bound-trees} one obtains the same upper bound on fillings as in \cref{thm:filling tree diagrams}
    with $\overline{\edgeDist_X}$ replaced by $C \cdot \overline{\MonDist_{\widehat{X}}}$.
    One now proceeds as in the proof of \cref{thm:Dehn-Brown-complexes}, keeping
    in mind that $\Dehn_X \approx \Dehn_Q$.
\end{proof}

\begin{lem}%
    \label{lem:monodromy-distortion-subgroup-distortion}
    $\MonDist_H^G\preccurlyeq \dist_H^G$.
\end{lem}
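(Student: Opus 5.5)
The plan is to show that $W(\widehat{\gamma})$ is the image in $G$ of a word of length $\lesssim \ell_{\widehat{X}}(\widehat{\gamma})$ in $S_G$, and then to apply the definition of distortion. Concretely, given an $\widehat{X}$-loop $\widehat{\gamma}=(h_0,e_1,h_1,\ldots,e_k,h_k)$ with $\ell_{\widehat{X}}(\widehat{\gamma})\leq n$, the element
\[
    W(\widehat{\gamma}) = h_0 m_{e_1} h_1 \cdots m_{e_k} h_k
\]
is, by its definition, a product in $G$ of the elements $h_i\in H$ and the elements $m_{e_i}\in G$.

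First, since $S_H\subseteq S_G$, we have $\ell_G(h_i)\leq \ell_H(h_i)$ for every $i$. Second, the elements $m_e=\transElem{w}{e}\transElem{v}{e}^{-1}$ range over a finite set, because $\lquot{X}{G}$ has finitely many oriented edges. Hence
\[
    M\defq \max_e \ell_G(m_e)
\]
is a finite constant depending only on the chosen data. By the triangle inequality,
\[
    \ell_G\bigl(W(\widehat{\gamma})\bigr)\leq \sum_{i=0}^{k}\ell_H(h_i)+Mk\leq \max\{1,M\}\cdot \ell_{\widehat{X}}(\widehat{\gamma})\leq \max\{1,M\}\, n.
\]

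Next, $\widehat{\gamma}$ is a loop, so $u_k=m_{e_1}\cdots m_{e_k}\in H$ as noted in the proof of \cref{lem:monodromy-estimate}; consequently $W(\widehat{\gamma})\in H$. The definition of distortion then gives
\[
    \ell_H\bigl(W(\widehat{\gamma})\bigr)\leq \dist_H^G\bigl(\max\{1,M\}\,n\bigr).
\]
Taking the maximum over all $\widehat{X}$-loops of length at most $n$ yields $\MonDist_{\widehat{X}}(n)\leq \dist_H^G(\max\{1,M\}n)$, which is the claimed inequality $\MonDist_H^G\preccurlyeq\dist_H^G$. By \cref{lem:monodromy-well-defined}, this conclusion does not depend on the choice of the elements $\transElem{v}{e}$.

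There is no serious obstacle here. The only point that needs care is to check that $W(\widehat{\gamma})$ really lies in $H$, so that its $H$-length is defined and is controlled by distortion, and to note that the constant $M$ depends only on finitely many fixed elements of $G$.
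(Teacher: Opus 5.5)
Your proof is correct and is essentially the paper's argument: you bound $\ell_G(W(\widehat{\gamma}))$ linearly in $\ell_{\widehat{X}}(\widehat{\gamma})$ using $S_H\subseteq S_G$ and the finitely many elements $m_e$, and then apply the definition of distortion using $W(\widehat{\gamma})\in H$. Your constant $\max\{1,M\}$ is slightly more careful than the paper's $C=\max_e\ell_G(m_e)$, which tacitly needs $C\geq 1$ for the step $\sum_i\ell_G(h_i)\leq C\sum_i\ell_H(h_i)$.
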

\begin{proof}
    Let $C \defq \max \{ \ell_G(m_e) \mid e \in \lquot{X}{G} \}$ and
    $\widehat{\gamma} = (h_0,e_1,h_1,\ldots,e_k,h_k)$ be an $\widehat{X}$-loop with
    $\ell_{\widehat{X}}(\widehat{\gamma})\leq n$.
    Then $\ell_G(h_i)\leq \ell_H(h_i)$, so
    \[
        \ell_G(W(\widehat{\gamma}))
        \leq Ck + \sum_{i=0}^{k} \ell_G(h_i)
        \leq C \Bigl(k+\sum_{i=0}^{k}\ell_H(h_i)\Bigr)
        \leq Cn.
    \]
    As $W(\widehat{\gamma}) \in H$, we have $\ell_H(W(\widehat{\gamma}))\leq \dist_H^G(Cn)$.
\end{proof}

The following special case recovers a result on the Dehn function of central extensions by Conner.

\begin{cor}[\cite{conner}]%
    \label{cor:central-extension}
    Let $1 \to H \to G \to Q \to 1$ be a central group extension where $Q$ and $H$ are finitely presented.
    Then $\Dehn_G(n) \preccurlyeq \Dehn_Q(n)^3$.
\end{cor}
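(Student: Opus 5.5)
The plan is to apply \cref{thm:bound-group-extensions} and to show that, for a central extension, the monodromy distortion is at most quadratic in terms of $\Dehn_Q$. More precisely, I aim to prove $\MonDist_H^G(n)\preccurlyeq \Dehn_Q(n)$, or at least that $\overline{\MonDist_H^G}(m)\preccurlyeq m$ up to the relevant scale. Since $H$ is central, it is in particular abelian and finitely presented. Hence $H$ is a finitely generated abelian group and $\Dehn_H(n)\preccurlyeq n^2$.

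\textbf{Step 1: the monodromy is trivial.} Conjugation by any element of $G$ acts trivially on $H$. So all the maps $\transConj{v}{e}$ are the identity, and the monodromy elements act trivially. In the formula \eqref{eq:induced-word-by-conjugates} we have $u_ih_iu_i^{-1}=h_i$, so
\[
W(\widehat{\gamma})=h_0h_1\cdots h_k\,u_k .
\]
The product of the $h_i$ has $H$-length at most $\sum_i \ell_H(h_i)\leq n$. The element $u_k=m_{e_1}\cdots m_{e_k}\in H$ is determined by the closed edge path $e_1\cdots e_k$ in the Cayley complex $X$ of $Q$.

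\textbf{Step 2: bound $\ell_H(u_k)$.} Fill the loop $e_1\cdots e_k$ by a van~Kampen diagram over $Q$ with at most $\Dehn_Q(k)$ relator cells. Each relator $r$ of $Q$ lifts to an element $z_r\in H$. Because $H$ is central, $u_k$ is the product of the $z_r^{\pm1}$ over the cells of the diagram; centrality is exactly what makes the conjugating elements irrelevant. This gives
\[
\ell_H(u_k)\leq \max_r \ell_H(z_r)\cdot \Dehn_Q(k).
\]
Together with Step 1, this yields $\MonDist_H^G(n)\preccurlyeq n+\Dehn_Q(n)\approx\Dehn_Q(n)$.

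\textbf{Step 3: plug in.} Using that $\Dehn_Q$ is at least linear and superadditive up to $\approx$ (Dehn functions are equivalent to superadditive ones), we get $\overline{\MonDist_H^G}(m)\preccurlyeq \Dehn_Q(m)$. Hence \cref{thm:bound-group-extensions} gives
\[
\Dehn_G(n)\preccurlyeq \Dehn_Q(n)\cdot \bigl(\Dehn_Q(\Dehn_Q(n))\bigr)^2 ,
\]
which is much weaker than $\Dehn_Q(n)^3$.

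\textbf{The main obstacle.} The substitution in Step 3 is too crude. The required refinement is to revisit the proof of \cref{thm:filling tree diagrams} in this setting rather than applying \cref{thm:bound-group-extensions} as a black box. In the reduction of \cref{prop:reduction to trees}, the loop $\gamma'$ has length $m_n\preccurlyeq\Dehn_Q(n)$. Its vertical pieces in the fibers come only from the original $n$ vertical edges together with boundedly many edges per face collapse.

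The key point is that the monodromy accumulated along the tree diagram is a product of the central elements $z_r$, one for each $2$-cell of the original van~Kampen diagram of $\pi\circ\gamma$. Hence every path $\gamma'_{e_i}$ has length at most
\[
n+C\cdot\Area(f)\preccurlyeq \Dehn_Q(n),
\]
not $\Dehn_Q(\Dehn_Q(n))$. Each vertex-group filling is then a loop of length $\preccurlyeq \Dehn_Q(n)$ in the abelian group $H$, of area $\preccurlyeq\Dehn_Q(n)^2$. The number of such fillings is at most the number of tree edges, which is $\preccurlyeq \Dehn_Q(n)$.

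This yields the total bound $\Dehn_Q(n)\cdot\Dehn_Q(n)^2=\Dehn_Q(n)^3$. The delicate part is making precise that the central contributions add up globally over the whole diagram rather than compounding per subtree. I would try to do this by tracking, via centrality, the element of $H$ recorded in each fiber as a sum of $z_r$-contributions indexed by disjoint sets of cells.
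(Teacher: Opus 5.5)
Your Steps~1 and~2 are correct, and Step~2 exposes something the paper's one-line proof glosses over. The paper notes that centrality turns \eqref{eq:induced-word-by-conjugates} into $W(\widehat{\gamma})=h_0h_1\cdots h_k\,u_k$. It then declares $\MonDist_H^G$ linear and plugs this into \cref{thm:bound-group-extensions}. But the leftover factor $u_k=m_{e_1}\cdots m_{e_k}$ is exactly your relator holonomy, and it is not linearly bounded over all $\widehat{X}$-loops. For the Heisenberg group with $H$ its center and $Q=\mathbb{Z}^2$, the loop tracing $[a^n,b^n]$ has $u_k=z^{\pm n^2}$. What rescues the paper's argument is that the proof of \cref{thm:bound-group-extensions} only applies \cref{lem:monodromy-estimate} to subpaths of the loop $\gamma'$ produced by \cref{prop:reduction to trees}, and their projections bound trees. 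So abandoning Step~3 and opening the black box is the right move.

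The gap is that you stop exactly at the step that needs proof, and the mechanism you suggest (tracking $z_r$-contributions over disjoint sets of cells) is not what is needed. A bound $\ell(\gamma'_{e_i})\preccurlyeq\Dehn_Q(n)$ for each edge separately does not suffice. The loop filled in the fiber over a vertex $v$ has length at most $\ell(\gamma')+\sum_i\ell(\gamma'_{e_i})$, and $v$ may have on the order of $\ell(\gamma')$ incident edges. You need a bound linear in the length of the subpath $\gamma_{e_i}$ of $\gamma'$ over the subtree $T_i$.

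That bound is immediate. The projection $\pi\circ\gamma_{e_i}$ is the boundary walk of the tree $T_i\cup e_i$, so the corresponding product of the $m_e$ freely reduces to the empty word, because $m_{\overline{e}}=m_e^{-1}$. Hence $u_k=1$; this is just your Step~2 for a diagram of area $0$. By centrality $W=h_0\cdots h_k$, so \cref{lem:monodromy-estimate} and \eqref{eq:monodromy-upper-bound-trees} give $\ell(\gamma'_{e_i})\leq C'\ell(\gamma_{e_i})$ for a constant $C'$. The subpaths for distinct children of $v$ are disjoint, so the fiber loop at $v$ has length at most $(1+2C')\ell(\gamma')$; the extra $C'\ell(\gamma')$ accounts for the edge towards the root. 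No holonomy is left to track. The face collapses of \cref{lem:lifting-elementary-2-simplex-homotopies} have already converted the relator contributions into vertical $\lambda$-edges of $\gamma'$, which are counted in $\ell(\gamma')\leq n+(2C+4)\Area(f)\preccurlyeq\Dehn_Q(n)$.

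Your count then goes through. There are at most $\ell(\gamma')/2$ fiber fillings, each of area at most $\Dehn_H\bigl((1+2C')\ell(\gamma')\bigr)\preccurlyeq\Dehn_Q(n)^2$. The collars have total size $\preccurlyeq\Dehn_Q(n)^2$, and the reduction contributes $(C+1)\Area(f)$ cells, for a total of $\preccurlyeq\Dehn_Q(n)^3$. Equivalently, the paper's one-liner is correct once $\MonDist_H^G$ is restricted to $\widehat{X}$-loops whose edge path is the boundary of a tree van~Kampen diagram.
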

\begin{proof}
    Because $H$ is central, it is abelian, so $\Dehn_H$ is at most quadratic.
    Moreover, using the description of the words $W(\widehat{\gamma})$ from
    \cref{eq:induced-word-by-conjugates}, one sees that $\MonDist_H^G$ is linear,
    as the conjugation action by any element of $G$ on $H$ is trivial.
\end{proof}

\begin{rem}%
    \label{rem:monodromy-bound-better-than-distortion}
    In general, for an extension $1\to H\to G\to Q\to 1$, we have $\MonDist_H^G \not\approx \dist_H^G$. Indeed, the proof of \cref{cor:central-extension} shows that, in the case of a central extension, $\MonDist_H^G(n)\approx n$. However, there are central extensions of nilpotent groups with polynomial distortion of any integer degree $d\geq 1$.
\end{rem}

\begin{example}
    An example of an extension for which the bound in \cref{thm:bound-group-extensions} cannot be improved is the solvable group
    \[
        \Sol_3(\mathbb{Z})=\mathbb{Z}^2\rtimes_A \mathbb{Z}= \left\langle x, y, t \mid [x,y], txt^{-1}=x^2y, tyt^{-1}=xy\right\rangle
        \text{ for }
        A=\begin{pmatrix}2 & 1 \\ 1 & 1\end{pmatrix}
    \]
    Indeed, it is well known that the Dehn function of $\Sol_3(\mathbb{Z})$ is exponential and that the same is true for the distortion of $\mathbb{Z}^2$ in $\Sol_3(\mathbb{Z})$~\cite{ECHLPT-92}. Since \cref{lem:monodromy-distortion-subgroup-distortion} implies that $\MonDist_{\mathbb{Z}^2}^{\Sol_3(\mathbb{Z})}$ is bounded above by an exponential function, the optimality follows.

    Note that the discussion in the previous paragraph also implies that the growth rate of $\MonDist_{\mathbb{Z}^2}^{\Sol_3(\mathbb{Z})}$ is precisely exponential. One can also give a direct proof that $\MonDist_{\mathbb{Z}^2}^{\Sol_3(\mathbb{Z})}$ satisfies an exponential lower bound. For this, observe that the Bass--Serre tree $X$ of $\Sol_3(\mathbb{Z})$ is the real line with edges corresponding to unit intervals with integer endpoints. There is a single orbit of edges. Since $t$ acts by $x\mapsto x+1$, we can choose $m_e=t^{-1}$ for all positively oriented edges $e$ of $X$. Let $e_i$ be the edge from $i-1$ to $i$ and $\overline{e_i}$ the inverse edge from $i$ to $i-1$.

    With these choices, consider the $\widehat{X}$-loop
    \[
        \widehat{\gamma}=(h_0,e_1,h_1,\ldots, e_k,h_k,\overline{e_k},h_{k+1},\overline{e_{k-1}}, \ldots, \overline{e_1},h_{2k}),
    \]
    with $h_k=x$ and $h_i=1$ for $i\neq k$.
    Then $W(\widehat{\gamma})=t^{-k}xt^k=x^{f_{2k-1}}y^{-f_{2k}}$, where $f_i$ denotes the $i$-th Fibonacci number.
    In particular, $W(\widehat{\gamma})$ grows exponentially in $\ell_{\widehat{X}}(\widehat{\gamma})=2k+1$.
    We deduce that $\MonDist_{\mathbb{Z}^2}^{\Sol_3(\mathbb{Z})}(n) \succcurlyeq e^n$.
\end{example}

\subsection{Amalgams of nilpotent groups along cyclic subgroups}
Hidber proved in~\cite{Hidber-1999} that if $G=G_1\ast_H G_2$ is an amalgam of two nilpotent groups of class $\leq c$ along an infinite cyclic subgroup $H$, then $\dist_H^G(n)\preccurlyeq n^c$ and $\Dehn_G(n)\preccurlyeq n^{4c^2}$. Combining Hidber's distortion bound, the fact that Dehn functions of nilpotent groups of class $c$ are bounded by $n^{c+1}$~\cite{GerstenHoltRiley-2003}, and \cref{thm:Dehn-Brown-trees}, we obtain the following improved upper bound.
\begin{cor}
    Let $G=G_1\ast_H G_2$ be an amalgamated product of finitely generated nilpotent groups $G_1$ and $G_2$ along a cyclic subgroup $H$. Then $\Dehn_G(n)\preccurlyeq n^{c(c+1)+1}$.
\end{cor}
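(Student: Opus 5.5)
We plan to apply \cref{thm:Dehn-Brown-trees} to the action of $G=G_1\ast_H G_2$ on its Bass--Serre tree $T$ and to plug in the known bounds for the vertex and edge groups. There are two vertex orbits, with stabilizers conjugate to $G_1$ and $G_2$, and one edge orbit, with stabilizer conjugate to $H$. Finitely generated nilpotent groups are finitely presented and $H$ is cyclic, so the hypotheses of \cref{thm:Dehn-Brown-trees} hold. We get
\[
    \Dehn_G(n)\preccurlyeq n\cdot\max_{i=1,2}\Dehn_{G_i}\bigl(\overline{\edgeDist_T}(n)\bigr).
\]
Here $c$ denotes the maximum of the nilpotency classes of $G_1$ and $G_2$.

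If $H$ is finite, it is undistorted, so $\edgeDist_T(n)\preccurlyeq n$. Then the bound becomes $n\cdot n^{c+1}=n^{c+2}\leq n^{c(c+1)+1}$ for $c\geq 1$. (If $c=0$ both groups are finite and the claim is trivial.) So assume $H$ is infinite cyclic. Hidber's theorem~\cite{Hidber-1999} gives $\dist_H^G(n)\preccurlyeq n^c$. Since edge stabilizers are conjugates of $H$, and conjugation changes distortion only up to $\simeq$, we get $\edgeDist_T(n)\preccurlyeq n^c$.

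The superadditive closure requires a small remark. The function $n\mapsto Cn^c+Cn+C$ is not superadditive because of the constant term. However, replacing it by $C'n^c$ for $n\geq 1$ gives a superadditive function that dominates $\edgeDist_T$ up to equivalence. Hence $\overline{\edgeDist_T}(n)\preccurlyeq n^c$. Strictly, one must check that taking superadditive closures respects $\preccurlyeq$ closely enough. Since $\edgeDist_T$ is bounded by a superadditive polynomial $Kn^c$ for all $n\geq1$, we get $\overline{\edgeDist_T}(n)\le Kn^c$ directly, which avoids the issue.

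Next, by~\cite{GerstenHoltRiley-2003} we have $\Dehn_{G_i}(m)\preccurlyeq m^{c+1}$. Composing these bounds gives $\Dehn_{G_i}(\overline{\edgeDist_T}(n))\preccurlyeq (n^c)^{c+1}=n^{c(c+1)}$. Here we use that $\preccurlyeq$ bounds compose well when the outer function is a monotone polynomial: $C(Kn^c\cdot C+C)^{c+1}+\dots$ is still $O(n^{c(c+1)})$. Multiplying by $n$ yields $\Dehn_G(n)\preccurlyeq n^{c(c+1)+1}$.

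The main obstacle is purely bookkeeping: making sure the equivalences and the superadditive closure interact correctly with the composition in \cref{thm:Dehn-Brown-trees}. Everything else is direct substitution of the cited bounds.
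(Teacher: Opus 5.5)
Your proposal is correct and follows the same route as the paper: apply \cref{thm:Dehn-Brown-trees} to the Bass--Serre tree, then plug in Hidber's bound $\dist_H^G(n)\preccurlyeq n^c$ for the edge group and the Gersten--Holt--Riley bound $n^{c+1}$ for the vertex groups. Your extra bookkeeping fills in details the paper leaves implicit, namely the finite cyclic case and bounding $\overline{\edgeDist_T}$ by the superadditive function $Kn^c$.
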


\bibliography{references}

\end{document}